\documentclass[12pt,a4paper]{article}
\usepackage[utf8]{inputenc}
\usepackage{amsmath}
\usepackage{amsfonts}
\usepackage{amssymb}
\usepackage{amsthm}
\usepackage{hyperref}
\usepackage[left=1.9cm,right=1.9cm,top=2cm,bottom=2cm]{geometry}
\usepackage{xcolor}
\usepackage[all,cmtip]{xy}
\usepackage{tikz}
\usetikzlibrary{arrows}
\usetikzlibrary{decorations.markings}
\usepackage{enumerate}

\newcommand{\id}{\mathrm{id}}

\newcommand{\rk}{\mathrm{rk}}
\newcommand{\codim}{\mathrm{codim}\,}

\newtheorem{thm}{Theorem}[section]
\newtheorem*{thm*}{Theorem}
\newtheorem*{cor*}{Corollary}
\newtheorem{prop}[thm]{Proposition}
\newtheorem*{prop*}{Proposition}
\newtheorem{cor}[thm]{Corollary}
\newtheorem{lem}[thm]{Lemma}

\theoremstyle{definition}
\newtheorem{rem}[thm]{Remark}
\newtheorem{defn}[thm]{Definition}
\newtheorem{ex}[thm]{Example}

\begin{document}

\title{Reconstructing the orbit type stratification of a torus action from its equivariant cohomology}
\author{Oliver Goertsches\footnote{Philipps-Universit\"at Marburg, email:
goertsch@mathematik.uni-marburg.de}\,\, and Leopold
Zoller\footnote{Max Planck Institute for Mathematics in Bonn, email: zoller@math.lmu.de}}

\maketitle

\begin{abstract} 
We investigate what information on the orbit type stratification of a torus action on a compact space is contained in its rational equivariant cohomology algebra. Regarding the (labelled) poset structure of the stratification we show that equivariant cohomology encodes the subposet of ramified elements. For equivariantly formal actions, we also examine what cohomological information of the stratification is encoded. In the smooth setting we show that under certain conditions -- which in particular hold for a compact orientable manifold with discrete fixed point set -- the equivariant cohomologies of the strata are encoded in the equivariant cohomology of the manifold.
\end{abstract}

\section{Introduction}
Motivated by Masuda's equivariant cohomological rigidity result for toric symplectic manifolds \cite[Theorem 1.1]{M}, Franz--Yamanaka \cite{FY} recently showed that the isomorphism type of a GKM graph is encoded in its graph cohomology. Hence, for a GKM manifold, the equivariant cohomology contains complete information on the combinatorics of the one-skeleton which translates to a complete understanding of the combinatorial aspects of the entire orbit type stratification, {see Remark \ref{rem:faceposet}}. Furthermore in \cite{All} and \cite{Q}, combinatorial aspects of actions of compact Lie groups are related to the spectrum of the equivariant cohomology ring, leading among other things to an algebraic criterion for uniformity of an action. {These results naturally trigger the question of how far its equivariant cohomology determines the combinatorics of a general torus action.} More specifically, we consider the connected orbit type stratification of an action of a compact torus $T$ on a space $X$, i.e.\ the collection of all connected components of fixed point sets $X^U$ where $U\subset T$ is {a} subtorus. It naturally carries the combinatorial structure of a poset as well as a function which remembers the kernel of the action on each element of the stratum. We ask whether this combinatorial data is encoded in the rational equivariant cohomology algebra. In full generality, such a statement cannot hold true, as the equivariant cohomology algebra of any torus action on a sphere with nonempty, connected fixed point set is the same as that of the trivial action, see Example \ref{ex:spheres}. In this paper we argue that the reason for such behavior is to be found in the existence of unramified elements in the orbit type stratification. We define an element in the orbit type stratification to be \emph{ramified} if it is either minimal, or, recursively, minimal with the property that it contains two distinct ramified elements -- see Definition \ref{defn:ramified}.  Our first result, Theorem \ref{thm:encodesstratification}, which holds under mild topological assumptions on the space but without any further conditions on the action, states:
 \begin{thm*} The rational equivariant cohomology of a compact $T$-space $X$ encodes the subposet $\overline{\chi}$ of ramified elements in the connected orbit type stratification, together with the function $\overline{\lambda}:\overline{\chi}\to \{\textrm{connected subgroups of }T\}$ that {sends an element $C\in\overline{\chi}$ to} the identity component of the kernel of the $T$-action on $C$.
 \end{thm*}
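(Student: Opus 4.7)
I would reconstruct the pair $(\overline{\chi},\overline{\lambda})$ from the $H^*(BT;\mathbb{Q})$-algebra $A=H^*_T(X;\mathbb{Q})$ by a recursion on prime ideals that mirrors the recursive definition of ramified strata. The main ingredient is a Quillen-type stratification of $\mathrm{Spec}(A_{\mathrm{red}})$: under the paper's topological hypotheses this spectrum should decompose as a union of irreducible closed \emph{pieces} $V_C$, one per connected stratum $C$ of the orbit type stratification, with the structure morphism $V_C \to \mathrm{Spec}(R)=\mathfrak{t}$ landing on the Lie subalgebra $\mathfrak{u}_C\subseteq\mathfrak{t}$ of the identity component of the kernel of the action on $C$. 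Writing $\mathfrak{p}_C\subset A$ for the prime corresponding to $V_C$, the combinatorics translate as $C\leq C' \Longleftrightarrow V_{C'}\subseteq V_C \Longleftrightarrow \mathfrak{p}_C\subseteq\mathfrak{p}_{C'}$.

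Given this dictionary, the base case identifies the minimal strata with the minimal primes of $A_{\mathrm{red}}$: indeed $V_C$ is an irreducible component of $\mathrm{Spec}(A_{\mathrm{red}})$ exactly when no $V_{C'}$ strictly contains it, equivalently when there is no stratum $C'\subsetneq C$, i.e.\ when $C$ is minimal. The recursive step then consists of iterating the following procedure: given a list of ramified primes $\mathfrak{p}_{C_1},\ldots,\mathfrak{p}_{C_k}$ identified at earlier stages, for every pair $(i,j)$ with $i\neq j$ compute the minimal primes of $A_{\mathrm{red}}/(\mathfrak{p}_{C_i}+\mathfrak{p}_{C_j})$. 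These correspond to the irreducible components of $V_{C_i}\cap V_{C_j}$, each of which by the stratification is a piece $V_{C'}$ for a minimal upper bound $C'$ of $C_i$ and $C_j$ in the orbit poset -- hence a ramified stratum by definition. The label $\overline{\lambda}(C')$ is read off from the contraction $\mathfrak{p}_{C'}\cap R$, which cuts out the rational Lie subalgebra $\mathfrak{u}_{C'}\subset\mathfrak{t}$ and thus determines the connected subgroup. Iterating until the family stabilises yields exactly the primes corresponding to ramified strata, together with their labels and poset order.

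The main obstacle will be establishing the Quillen-style stratification with the needed properties: each $V_C$ should be irreducible (so that it corresponds to a single prime $\mathfrak{p}_C$), and the intersection pattern should match the poset, in the sense that $V_C\cap V_{C'}=\bigcup_{C''\geq C,C'}V_{C''}$ up to closure. What is genuinely conceptual, and must be verified separately, is that non-ramified strata are \emph{invisible} to the recursion: since by definition such a stratum is never a minimal upper bound of two distinct ramified strata, its piece is strictly contained in some ramified piece produced earlier in the construction and therefore never appears as an irreducible component of an intersection. This is exactly what forces the algebraic output to coincide with $\overline{\chi}$ and not with the full orbit poset $\chi$, and it is where the unramified counterexamples like the trivial action on a sphere (Example \ref{ex:spheres}) get correctly absorbed.
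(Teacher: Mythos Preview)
Your approach is valid and is precisely the Quillen--Allday route the paper itself flags in Remark~\ref{rem:somewhatdually} as an alternative to Thom systems. The paper instead introduces Thom systems (Definition~\ref{defn:thomsystem}), uses Theorems~\ref{thm:detectthomsys} and~\ref{thm:inclusiondetector} to build the auxiliary poset $\chi'$ of all pairs $(U,\text{component of }X^U)$ (equivalently, Allday's poset $\mathcal{T}(X)$), and then runs the ramification recursion inside $\chi'$, proving that ramified elements of $\chi'$ biject with those of $\chi$. Your recursion on minimal primes is the spectral translation of the same idea. The paper's choice of Thom systems pays dividends in Section~\ref{sec:cohom}, where the minimal strict Thom system is identified with the equivariant Thom classes of the fixed components and becomes the engine for reconstructing the cohomology of strata; your prime-ideal language would need separate tools there.

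Two points where your sketch needs tightening. First, the stratification of $\mathrm{Spec}(A_{\mathrm{red}})$ in the literature is indexed by pairs $(K,c)\in\mathcal{T}(X)$, not by $\chi$; your pieces $V_C$ are the sub-collection with $K=\lambda(C)$. You should verify that $C\mapsto\mathfrak{p}_C$ is injective on $\chi$ (it is: if $\lambda(C)=\lambda(C')$ but $C\neq C'$, a $\lambda(C)$-local Thom system element separates them), and that the irreducible components of $V_{C_i}\cap V_{C_j}$ are again of the form $V_D$ rather than $V(\mathfrak{p}(K,c))$ for a non-maximal pair. This follows because among all $(K,c)$ with $C_i,C_j\subset c$, the maximal ones are exactly $(\lambda(D),D)$ for $D$ a minimal upper bound in $\chi$ --- but this is the step you correctly identify as the main obstacle, and it uses the full Quillen stratification.

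Second, your explanation of why unramified strata are invisible is not quite the right one. You write that for unramified $C$ the piece $V_C$ is strictly contained in some earlier ramified piece $V_{C'}$ and ``therefore never appears as an irreducible component of an intersection.'' But $V_{C'}$ need not lie in the particular intersection $V_{D_1}\cap V_{D_2}$ under consideration, so this containment does not by itself rule $V_C$ out. The correct argument is direct: the irreducible components of $V_{D_1}\cap V_{D_2}$ are exactly the $V_D$ with $D$ a minimal upper bound of $D_1,D_2$ in $\chi$, and an unramified $C$ is \emph{by definition} never such a minimal upper bound for any ramified pair, hence $\mathfrak{p}_C$ is never output by the recursion.
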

Our technique of choice to prove this theorem is the new notion of Thom system, which we introduce in Definition \ref{defn:thomsystem}. It may be formulated in the general context of (graded) commutative rings and encodes algebraic properties of the equivariant Thom classes of the path-components of the fixed point set in case of a smooth action. While this connection to Thom classes plays a major role in the second half of the paper, the theorem above could also be proved using the machinery developed in \cite{All} and \cite{Q}, see Remark \ref{rem:somewhatdually}.
 
In more specialized settings, the subset of ramified elements may already contain further information on the stratification. We note that the ramification condition in the statement below, which is Corollary \ref{cor:stratifequivformal}, is satisfied in particular if the fixed points of the action are isolated.
\begin{cor*}
Let $X$ be an equivariantly formal, compact $T$-space such that every isotropy codimension $1$ element of the orbit type stratification poset $\chi$ is ramified. Then $H_T^*(X)$ encodes $\chi$ up to rational equivalence. If $X$ is additionally a manifold and the $T$-action is smooth, then all of $\chi$ is encoded in $H_T^*(X)$.
\end{cor*}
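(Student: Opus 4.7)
The plan is to apply Theorem 1.1 to recover the labelled ramified subposet $(\overline{\chi}, \overline{\lambda})$ from $H_T^*(X)$, and then show that, under the codimension-1 ramification hypothesis, this data combinatorially determines the full labelled poset $(\chi, \lambda)$ -- up to rational equivalence in general and exactly in the smooth manifold case. The reconstruction proceeds by induction on the isotropy codimension $k$ of an element $C \in \chi$: the base cases $k = 0$ (minimal, hence ramified by definition) and $k = 1$ (ramified by hypothesis) are immediate, since $C$ already lies in $\overline{\chi}$ with the correct label.

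For $k \geq 2$, the inductive step rests on locating a $T$-fixed point $p \in C$: the $T/K$-action on $C$ (with $K = \lambda(C)$) inherits equivariant formality from $T$ -- a property preserved under restriction to a subtorus and passage to its fixed set -- so $C^T \neq \emptyset$. Since $T/K$ acts with finite kernel on $C$ by definition of $\lambda$, the weights of the tangent $T$-representation on $T_pC$ span $(\mathfrak{t}/\mathfrak{k})^*$, and their kernels produce codimension-1 subtori $K_1', \ldots, K_m' \supset K$ in $T$ with $\bigcap_i K_i' = K$. Each $K_i'$ is the label of a codimension-1 element $D_i \subset C$ through $p$, and by hypothesis each $D_i$ lies in $\overline{\chi}$. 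Any $C' \in \chi$ with $\bigcup_i D_i \subseteq C' \subseteq C$ satisfies $K \subseteq \lambda(C') \subseteq \bigcap_i K_i' = K$, forcing $\lambda(C') = K$ and thereby $C' = C$, since both are components of $X^K$ through $p$. In this way $C$ and its label $K$ are recovered from the configuration of codimension-1 ramified elements sharing a common $T$-fixed point.

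The main obstacle is to execute this reconstruction purely from $(\overline{\chi}, \overline{\lambda})$: one must recognise, from the labelled poset alone, which collections of codimension-1 ramified elements arise as the ``star'' of a prospective $T$-fixed point of a not-yet-identified $C$, and must distinguish different components of $X^K$ sharing a label $K$ from their lower neighbourhoods. The poset containments on $\overline{\chi}$ delivered by Theorem 1.1 carry precisely this scaffolding, and combined with the hypothesis should yield a clean combinatorial recipe. Finally, the upgrade from rational-equivalence to exact labels in the smooth manifold case is supplied by the Thom system itself: the equivariant Thom class of the normal bundle at a $T$-fixed point $p$ factors as a product of integral isotropy weights, which -- encoded in $H_T^*(X)$ through the Thom system -- pin down each $K_i'$ as an actual subgroup of $T$ rather than merely as its rational Lie subalgebra.
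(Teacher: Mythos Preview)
Your approach diverges substantially from the paper's and carries genuine gaps. The paper does not attempt to reconstruct $\chi$ inductively from local data at fixed points. Instead it argues directly: any $D \in \chi$ contains a \emph{unique} maximal ramified element $C$ (uniqueness because two distinct maximal ramified elements would force a minimal element of $\chi$ containing both, itself ramified and strictly larger, contradicting maximality). Since every isotropy-codimension $\le 1$ element below $D$ is ramified by hypothesis, it lies below $C$; hence $C$ and $D$ share the same one-skeleton. In the general case, equivariant formality of both (Proposition~\ref{prop:eqforminherited}) combined with the Chang--Skjelbred Lemma~\ref{lem:CSL} gives that $C \hookrightarrow D$ induces an isomorphism on equivariant (hence ordinary) cohomology --- this is what ``up to rational equivalence'' means. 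In the smooth case, the isotropy representation at a $T$-fixed point shows that the one-skeleton determines $\dim D$, so $\dim C = \dim D$ and $C = D$; thus $\overline{\chi} = \chi$.

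Your proposal has three concrete problems. First, you invoke the tangent representation $T_pC$ and its weights in what is presented as the general inductive step, but this is only available in the smooth setting; in the general compact-space case no such decomposition exists, and you give no alternative mechanism for the ``up to rational equivalence'' conclusion (the Chang--Skjelbred lemma, which is the actual engine here, never appears). Second, you have misread the phrase ``up to rational equivalence'': it refers to the inclusion $C \hookrightarrow D$ inducing a cohomology isomorphism, not to any ambiguity in the labels. The map $\overline{\lambda}$ already takes values in connected subgroups of $T$, which are determined by their Lie algebras, so there is nothing to ``upgrade from rational to integral''; the Thom-class factorization you invoke plays no role in this corollary. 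Third, the ``main obstacle'' you name --- recognising from $(\overline{\chi}, \overline{\lambda})$ alone which collections of codimension-$1$ elements form the star of a putative $C$ and distinguishing different components of $X^K$ --- is real for your route and you do not resolve it; the paper's argument sidesteps it entirely by showing $\overline{\chi} = \chi$ in the smooth case rather than rebuilding $\chi$ from below.
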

 
In Section \ref{sec:cohom} we investigate how far the equivariant cohomology algebra of a $T$-space encodes all other (equivariant) cohomology algebras in the orbit type stratum. A starting point is the following result (cf.\ Proposition \ref{prop:fpdimension}).
\begin{prop*}
The sum of all Betti numbers of each indiviual path-component of $X^T$ is encoded in $H_T^*(X)$. If $X$ is equivariantly formal then the individual sums of all Betti numbers of the components of $X^U$ are encoded in $H_T^*(X)$ for every subtorus $U\subset T$.
\end{prop*}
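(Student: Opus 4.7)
The plan is to apply Borel localization and extract Betti number sums from the localized ring via its primitive-idempotent decomposition and module ranks. Throughout I view $H_T^*(X)$ as a graded $H^*(BT)$-algebra; each subtorus $U \subset T$ is intrinsically recovered as the linear prime $\mathfrak{p}_U := \ker(H^*(BT) \to H^*(BU))$, so specifying $U$ requires no extra data.

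For the first assertion, invert $H^*(BT) \setminus \{0\}$. Borel localization yields
\[
Q \otimes_{H^*(BT)} H_T^*(X) \;\cong\; Q \otimes_\RR H^*(X^T) \;=\; \bigoplus_F \bigl( Q \otimes_\RR H^*(F) \bigr),
\]
with $Q = \mathrm{Quot}\,H^*(BT)$ and $F$ ranging over path-components of $X^T$. Each summand $Q \otimes_\RR H^*(F)$ is Artinian local (since $H^*(F)$ is connected Artinian local with residue field $\RR$), so the primitive idempotents of the localization correspond bijectively to the components $F$, and the $Q$-dimension of each primitive summand equals $\dim_\RR H^*(F) = \sum_i b_i(F)$.

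For the second assertion, fix $U$ and localize at $\mathfrak{p}_U$. Borel localization gives $H_T^*(X)_{\mathfrak{p}_U} \cong H_T^*(X^U)_{\mathfrak{p}_U}$. Equivariant formality of $X$ propagates to $X^U$ as a $T/U$-space via the standard argument that the inequalities $\dim H^*(X^T) \leq \dim H^*(X^U) \leq \dim H^*(X)$ (from Smith theory and localization) must collapse to equalities. Hence each $H_T^*(C)$, for $C$ a path-component of $X^U$, is free over $H^*(BT)$ of rank $\dim_\RR H^*(C)$, and the decomposition $H_T^*(X^U)_{\mathfrak{p}_U} = \bigoplus_C H_T^*(C)_{\mathfrak{p}_U}$ has local summands: the base-change identity (using the trivial $U$-action on $C$) gives a ring isomorphism $H_T^*(C)_{\mathfrak{p}_U} \otimes_{H^*(BT)_{\mathfrak{p}_U}} \kappa(\mathfrak{p}_U) \cong H^*(C) \otimes_\RR \kappa(\mathfrak{p}_U)$, which is local since its nilradical quotient is the field $\kappa(\mathfrak{p}_U)$. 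The primitive idempotents of $H_T^*(X)_{\mathfrak{p}_U}$ therefore recover the components $C$, and the free rank of each summand over $H^*(BT)_{\mathfrak{p}_U}$ — read off as the $\kappa(\mathfrak{p}_U)$-dimension of the residue algebra — equals $\sum_i b_i(C)$.

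The main obstacle is ensuring that the $H_T^*(C)_{\mathfrak{p}_U}$ are local, so that the primitive idempotents of the localization match components without spurious extras; this reduces to the fact that $H^*(C)$ is finite-dimensional Artinian local with residue field $\RR$, a property preserved under base change to field extensions of $\RR$. The conceptual core is that Borel localization converts the topological decomposition of $X^U$ into components into an algebraic idempotent decomposition intrinsic to the $H^*(BT)$-algebra $H_T^*(X)$, with module ranks over the localized coefficient ring delivering the Betti number sums.
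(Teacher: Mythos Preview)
Your argument is correct and takes a genuinely different route from the paper's. The paper (Proposition~\ref{prop:fpdimension}) relies on the Thom-system machinery of Section~3: it passes to the quotient-then-localized ring $S_U^{-1}(H_T^*(X)/\mathfrak{p}_U H_T^*(X))$, identifies this with $S_U^{-1}H_U^*(X^U)$ via equivariant formality, and reads off each $\dim H^*(F_U(\tau_i))$ as the $S_U^{-1}R/\mathfrak{p}_U$-rank of the annihilator ideal $I_i=\{x\mid x\tau_j=0\text{ for all }j\neq i\}$ attached to a strict $U$-local Thom system. You bypass Thom systems entirely: you localize $H_T^*(X)$ directly at $\mathfrak{p}_U$, apply Borel localization, and use the primitive-idempotent decomposition of the resulting semilocal ring, extracting ranks as residue-field dimensions. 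Your approach is more self-contained and closer to standard commutative algebra; the paper's has the advantage of tying components to explicit cohomology classes (the $\tau_i$) rather than abstract idempotents, which meshes with the Thom-class identifications used later in Section~\ref{sec:cohom}. Two small points worth tightening: the ring isomorphism $H_T^*(C)_{\mathfrak{p}_U}\otimes\kappa(\mathfrak{p}_U)\cong H^*(C)\otimes\kappa(\mathfrak{p}_U)$ needs not only the triviality of the $U$-action on $C$ but also the equivariant formality of $C$ (which you did establish); and the step from ``residue is local'' to ``$H_T^*(C)_{\mathfrak{p}_U}$ is local'' tacitly uses that a module-finite algebra over a local ring is local iff its reduction modulo the maximal ideal is. Also, the coefficient field throughout is $\mathbb{Q}$, not $\RR$.
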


In particular this implies the first half of the following corollary. The second part is a consequence of the theorem above and was proved first in \cite{FY}.

\begin{cor*}
For an equivariantly formal compact orientable $T$-manifold $M$ the equivariant cohomology algebra $H_T^*(M)$ encodes whether the action is of GKM type or not. In case the action is GKM, $H_T^*(M)$ also encodes the GKM graph of the action.
\end{cor*}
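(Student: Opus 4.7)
My plan is to split the corollary into its two halves---first detecting whether the action is of GKM type, then (assuming it is) recovering the GKM graph---and to combine Proposition~\ref{prop:fpdimension} with Theorem~\ref{thm:encodesstratification}.

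For the first half, I would first detect finiteness of $M^T$. Since $M$ is compact and orientable, every connected component of $M^T$ is a compact connected orientable manifold and hence has total Betti number $\ge 2$ unless it is a single point; Proposition~\ref{prop:fpdimension} encodes these individual total Betti numbers, so finiteness is directly readable, and if $M^T$ is infinite the action is not GKM. Assuming $M^T$ discrete, the remaining GKM condition is that every component of $M^K$, for every codimension-one subtorus $K\subset T$, has dimension at most $2$. Applying Proposition~\ref{prop:fpdimension} to such $K$ and using equivariant formality of $M^K$ over $T/K$, the total Betti number of each component $C$ equals $|C\cap M^T|$, so already any component containing three or more fixed points forces non-GKM. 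To exclude the subtler case of a component homeomorphic to $S^{2m}$ with $m\ge 2$ and only two fixed points, I would appeal to the refinement implicit in the proof of Proposition~\ref{prop:fpdimension}: the full Poincar\'e polynomial of each component of $M^U$, not merely its Betti sum, is encoded in $H_T^*(M)$, obtained by localizing at the prime of $H^*(BT)$ associated to $K$, decomposing the resulting $H^*(BT)$-algebra $H_T^*(M^K)$ along its idempotents for the individual components, and invoking equivariant formality once more to read off $H^*(C)$. The GKM condition then becomes the checkable inequality $\dim C\le 2$ for every such $C$.

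For the second half, assuming GKM, every invariant $2$-sphere contains two distinct fixed points and is therefore ramified in the sense of Definition~\ref{defn:ramified}; indeed the codimension-one elements of the ramified subposet $\overline{\chi}$ are precisely the invariant $2$-spheres. By Theorem~\ref{thm:encodesstratification}, $\overline{\chi}$ together with $\overline{\lambda}$ is encoded in $H_T^*(M)$. The GKM graph is then read off by taking the minimal elements of $\overline{\chi}$ as vertices, the codimension-one elements as edges (each containing exactly its two endpoint vertices), and the values of $\overline{\lambda}$ on the codimension-one elements as edge labels. This recovers the result of Franz--Yamanaka \cite{FY} via the ramification framework of Theorem~\ref{thm:encodesstratification}.

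I expect the main obstacle to be the two-dimensionality check in the first half. Betti sums alone, as stated literally in Proposition~\ref{prop:fpdimension}, are not sufficient: for instance one can build a non-GKM $T^2$-action on $S^2\times S^4$ (where the $S^4$-factor carries an $S^1$-action with two fixed points both of whose weights coincide) whose codimension-one components have exactly the same Betti sums as those of a GKM $T^2$-manifold with four fixed points. The remedy is the Poincar\'e polynomial refinement sketched above, which extracts genuinely graded cohomological information from $H_T^*(M)$ rather than mere numerics.
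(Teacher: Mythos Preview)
Your second half---recovering the GKM graph from the ramified subposet via Theorem~\ref{thm:encodesstratification}---is correct and is exactly what the paper does. Your first half correctly identifies the delicate point, namely that total Betti numbers alone cannot distinguish an invariant $S^2$ from a higher-dimensional rational sphere with two fixed points, but your proposed remedy does not work.

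The gap is in the Poincar\'e polynomial refinement. After Borel localization you obtain the graded $S_K^{-1}R$-algebra $S_K^{-1}H_T^*(M)\cong S_K^{-1}H_T^*(M^K)$, and the summand corresponding to a component $C$ of $M^K$ is $S_K^{-1}R\otimes H^*(C)$. But $S_K^{-1}R$ has homogeneous units in every even degree: any $\alpha\in R^2\setminus\mathfrak{p}_K$ is invertible there, as are all its powers. Hence a free graded module over $S_K^{-1}R$ remembers only the parities of the degrees of a basis, not the degrees themselves. Concretely, for $C$ a rational $S^{2m}$ the summand is $S_K^{-1}R[c]/(c^2)$ with $|c|=2m$, and $c\mapsto \alpha^{m-1}c'$ is a graded $S_K^{-1}R$-algebra isomorphism onto the case $|c'|=2$. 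Thus your own example $S^2\times S^4$ is not resolved: the localized algebra does not distinguish the $S^4$-components from genuine $2$-spheres. This is consistent with Example~\ref{ex:cohomnotencoded}, which shows that individual Betti numbers of fixed strata are in general not encoded even in the equivariantly formal case.

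The correct fix is simpler and is what lies behind the paper's terse argument. One reads off $\dim M$ as the top nonvanishing degree of $H^*(M)\cong H_T^*(M)\otimes_R\mathbb{Q}$. With the fixed points isolated, at each $p\in M^T$ the tangent space splits into $\tfrac{1}{2}\dim M$ two-dimensional weight spaces, and the codimension-one elements of $\chi$ through $p$ correspond to the \emph{distinct} kernels of these weights. The action is therefore GKM if and only if at every $p$ the number of codimension-one elements of $\overline{\chi}$ containing $p$ equals $\tfrac{1}{2}\dim M$; since all such elements are ramified this count is read off directly from $\overline{\chi}$. In your $S^2\times S^4$ example every vertex has valence $2$ while $\tfrac{1}{2}\dim M=3$, so non-GKM is detected immediately.
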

In general, one can not expect $H_T^*(X)$ to contain more specific information like individual Betti numbers of the strata or multiplicative structure, even for equivariantly formal actions on compact manifolds whose orbit type stratification consists only of ramified elements (see Example \ref{ex:cohomnotencoded}). However under stronger conditions, we show in Theorem \ref{thm:encodescohomology}:
\begin{thm*}
Let $M$ be an equivariantly formal, compact orientable $T$-manifold such that the map $H^*(M)\rightarrow H^*(X)$ is surjective for all components $X$ of $M^T$. Then the equivariant cohomology $H_T^*(M)$ encodes the connected orbit type stratification $\chi$ of $M$ as well as for any $C, D\in \chi$ with $C\subset D$ the respective equivariant cohomology algebras and the map $H_T^*(D)\rightarrow H_T^*(C)$ induced by the inclusion.
\end{thm*}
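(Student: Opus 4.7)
The plan is to combine the reconstruction of the stratification poset from the smooth-manifold part of Corollary \ref{cor:stratifequivformal} with a careful identification of the subalgebras $H_T^*(C)\subset H_T^*(C^T)$ using the Thom system in $H_T^*(M)$. Since $M$ is a compact, orientable, equivariantly formal $T$-manifold, that corollary already recovers $\chi$ and its labelling $\lambda$ from $H_T^*(M)$. In particular, for every $C\in\chi$ the set $\pi_0(C^T)$ is identified as the collection of minimal elements of $\chi$ lying below $C$.

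Each stratum $C$ is itself a compact, orientable, equivariantly formal $T$-manifold (a standard consequence of equivariant formality of $M$ applied to components of fixed point sets of subtori), so $H_T^*(C)$ embeds into $H_T^*(C^T)=\bigoplus_{X\in\pi_0(C^T)}H_T^*(X)$. I would show that this embedding coincides with the image of
\[
H_T^*(M)\hookrightarrow H_T^*(M^T)\xrightarrow{\;\pi_C\;}\bigoplus_{X\in\pi_0(C^T)}H_T^*(X),
\]
where $\pi_C$ is the coordinate projection. Containment of the image in $H_T^*(C)$ is automatic by functoriality. The reverse inclusion, equivalent to surjectivity of the restriction $r_C\colon H_T^*(M)\to H_T^*(C)$, is where the hypothesis on $H^*(M)\to H^*(X)$ enters. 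By equivariant formality of $M$ together with a Serre spectral sequence argument, componentwise surjectivity $H_T^*(M)\twoheadrightarrow H_T^*(X)$ holds for each $X\in\pi_0(M^T)$; the task is to globalise this to surjectivity onto $H_T^*(C)$. Here the Thom system structure on $H_T^*(M)$ plays a key role: the equivariant Thom classes $\tau_X$ for $X\in\pi_0(M^T)$ are algebraically ``concentrated'' on the corresponding component, and I would use products by such Thom classes to upgrade componentwise lifts of a given $\alpha\in H_T^*(C)\subset\bigoplus_{X\subset C}H_T^*(X)$ into a single global class in $H_T^*(M)$ whose image under $r_C$ equals $\alpha$.

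With $H_T^*(C)$ identified as a subalgebra of $H_T^*(C^T)$ for every $C\in\chi$, the last assertion is then essentially formal: for $C\subset D$, the canonical surjection $\pi_C^D\colon H_T^*(D^T)\to H_T^*(C^T)$ satisfies $\pi_C=\pi_C^D\circ\pi_D$, so restricting $\pi_C^D$ to the reconstructed image $\pi_D(H_T^*(M))=H_T^*(D)$ realises the restriction $H_T^*(D)\to H_T^*(C)$ induced by $C\hookrightarrow D$. The main obstacle is the surjectivity step: outside the GKM situation the Chang--Skjelbred compatibilities for $H_T^*(M)$ range over codimension one isotropy submanifolds of $M$ that may straddle $C$ and its complement, so a naive extension-by-zero of a class in $H_T^*(C)$ will in general violate these compatibilities. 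The Thom-system bookkeeping is precisely what is needed to correct such a candidate lift by multiples of $\tau_X$ for components $X\not\subset C$, so that the resulting element lies in $H_T^*(M)$ while still restricting to the prescribed class on $C$.
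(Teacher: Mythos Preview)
The central claim in your proposal---that the restriction $r_C\colon H_T^*(M)\to H_T^*(C)$ is surjective for every $C\in\chi$---is false in general, and the suggested Thom-class correction cannot repair it. Take $M=S^6\subset\mathbb{C}^3\oplus\mathbb{R}$ with a $T^2$-action by pairwise independent weights $\alpha,\beta,\gamma$ on the three complex lines. The fixed point set is the two poles, so the surjectivity hypothesis $H^*(M)\to H^*(X)$ is trivially satisfied. Inside $H_T^*(M^T)\cong R^2$ one has $H_T^*(M)=\{(f,g)\mid f\equiv g\bmod \alpha\beta\gamma\}$. For $C$ the $2$-sphere fixed by $\ker\alpha$ (which contains both poles) one has $H_T^*(C)=\{(f,g)\mid f\equiv g\bmod\alpha\}$, and the image of $r_C$ is the strictly smaller submodule cut out by the congruence modulo $\alpha\beta\gamma$. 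Since $C^T=M^T$, there are no components $X\not\subset C$ whose Thom classes could be used for correction; your bookkeeping step is vacuous here and the gap remains.

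The paper does not attempt to prove surjectivity of $r_C$. Instead it shows, via the equivariant Mayer--Vietoris sequence for a tube around $C$, that the image of $r_C$ contains the principal ideal $J_C=e_C\cdot H_T^*(C)$ generated by the equivariant Euler class of the normal bundle. Because multiplication by $e_C$ is injective on $H_T^*(C)$, the map $x\mapsto(r_1(x)/r_1(\tau_C),\ldots,r_l(x)/r_l(\tau_C))$ from $r_C^{-1}(J_C)$ into $H_T^*(C^T)$ has image exactly $H_T^*(C)$. The genuinely new ingredient is then to recognise $r_C^{-1}(J_C)$ and the classes $r_i(\tau_C)=e_C|_{X_i}$ purely algebraically: one first singles out the equivariant Thom classes $\tau_i$ as the (essentially unique) strict Thom system of minimal degrees, and then factors each $r_i(\tau_i)$ according to the distinct codimension-one isotropy weights at $X_i$ (Lemma~\ref{lem:productdecomposition}), picking out the factors whose weight does not vanish on $\lambda(C)$. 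This Euler-class identification and the ensuing division step are the missing ideas in your outline.
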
 
The surjectivity condition is trivially satisfied in case the fixed point set of the action consists of isolated points.

In Section \ref{sec:integral} we conclude the paper with some remarks on the question which additional information on the orbit type stratification can be obtained from the cohomology by considering integral instead of rational coefficients.\\

\noindent {\bf Acknowledgements.\footnote{Declarations: other declarations requested in the submission guidelines are not applicable.} \footnote{Data sharing not applicable to this article as no datasets were generated or analysed during the current study.}} This work is part of a project funded by the Deutsche Forschungsgemeinschaft (DFG, German Research Foundation) - 452427095. The second author is grateful to the Max Planck Institute for Mathematics in Bonn for its hospitality and financial support.

\section{Preliminaries}
In this paper we consider continuous actions of a compact torus $T$. All spaces are assumed to be locally contractible, Hausdorff, and have finite-dimensional rational (singular) cohomology. In parts of Section \ref{sec:cohom} we will restrict to smooth manifolds. Cohomology is always singular with coefficients in $\mathbb{Q}$ if not specified otherwise and we usually suppress coefficients from the notation. When considering elements of a graded object, these will always be assumed to be homogeneous in absence of further specification.

 Given a $T$-space $X$, as well as a subgroup $U\subset T$, we denote by $X^U$ the set of points in $X$ fixed by $U$. Note that if $X$ is a smooth manifold and the action is smooth, then every component of $X^U$ is a smooth submanifold.
 {As $T$ is Abelian, the isotropy type $(T_p)$ of the $T$-orbit $T\cdot p$ of a point $p\in X$, i.e.\ the conjugacy class of $T_p$ in $T$, consists only of $T_p$. Moreover, the isotropy type contains the same information as the orbit type of $T\cdot p$ in the sense of \cite[Section I.4]{B}, i.e.\ the equivariant homeomorphism type of $T\cdot p$. Hence there is a bijection
\[\{\text{subgroups of T}\}\longleftrightarrow \{T\text{-orbit types}\}.\]
}
{As we consider mainly rational cohomology (with the exception of Section \ref{sec:integral}), we restrict our attention to connected subgroups $U$.}
\begin{defn}\label{defn:orbitstratification}
Let $X$ be a $T$-space. The \emph{connected orbit type stratification} of $X$ is the poset $\chi=\{\textrm{connected components of }X^U\mid U\subset T \textrm{ connected, closed subgroup}\}$ {ordered by inclusion}, together with the function $\lambda\colon \chi\rightarrow\{\textrm{connected, closed subgroups of }T\}$ such that $\lambda(C)$ is the identity component of the kernel of the restricted $T$-action on $C$ {(i.e., the identity component of the intersection of all isotropy groups of points in $C$)}. 
\end{defn} 

{We note that the function $\lambda$ is an order-reversing morphism of posets if we order both sides by inclusion.}

\begin{rem}\label{rem:faceposet}
{
Recording the geometric properties of torus actions in combinatorial objects has a long history with several different types of objects serving as a tool of bookkeeping for the combinatorial data. In case $X$ is a quasitoric manifold (see \cite[Definition 7.3.1]{BP}), the poset $\chi$ defined above is isomorphic to the face poset (note that all isotropy groups of a quasitoric manifold are connected). Furthermore $\lambda$ corresponds to the labelling function on faces emerging from the characteristic pair of $X$ (see \cite[Definition 7.3.4]{BP}). Thus in the quasitoric case the datum of the characteristic pair and the connected orbit type stratification are equivalent.}

{
In case $X$ is a GKM manifold, it is obvious from the definition that the GKM graph determines the connected orbit type stratification of the one-skeleton of the action. The GKM condition implies that this, in turn, already determines the complete connected orbit type stratification, see e.g.\ \cite[Lemma 4.1]{GKZ}. Conversely, the connected orbit type stratification of (the one-skeleton of) a GKM action determines the GKM graph up to scalars of the labelling.
}
\end{rem}

\begin{ex}\label{ex:posets}{
Consider $S^6\subset \mathbb{C}^3\oplus \mathbb{R}$ together with the $T^2$-action defined by $(s,t)\cdot (v,w,z,h)=(sv,tw,s^at^bz,h)$, for $a,b\in \mathbb{Z}$, where multiplication on the right hand side is usual complex multiplication. Below we depict the connected orbit type stratification for varying values of $(a,b)$. Inclusion relations are generated by the depicted arrows. For clarity we depict an element $C$ of $\chi$ together with its value under $\lambda$ as $(C',\lambda(C))$ where $C'$ is homeomorphic to $C$. We stress however that we do not consider the topological type of $C$ to be part of the combinatorial data when talking about the connected orbit type stratification as a (labelled) poset. The notation $\Delta S^1$ refers to the diagonal circle in $T^2$.}
\begin{center}

\begin{tikzpicture}
\node (a) at (0,0) {$(*,T)$};
\node (b) at (0,1.5) {$(*,T)$};
\node (c) at (3,0) {$(S^2,\{1\}\times S^1)$};
\node (d) at (3,1.5) {$(S^4,S^1\times\{1\})$};
\node (e) at (6,0.75) {$(S^6,\{1\})$};

\draw[thick, ->](a) to (c);
\draw[thick, ->](b) to (c);
\draw[thick, ->](a) to (d);
\draw[thick, ->](b) to (d);
\draw[thick, ->](c) to (e);
\draw[thick, ->](d) to (e);

\node (f) at (0,-1.75) {$(S^2,T)$};
\node (g) at (3,-1) {$(S^4,S^1\times \{1\})$};
\node (h) at (3,-2.5) {$(S^4,\{1\}\times S^1)$};
\node (i) at (6,-1.75) {$(S^6,\{1\})$};

\draw[thick, ->](f) to (g);
\draw[thick, ->](f) to (h);
\draw[thick, ->](g) to (i);
\draw[thick, ->](h) to (i);

\node (j) at (0,-3.75) {$(*,T)$};
\node (k) at (0,-5.25) {$(*,T)$};
\node (l) at (3,-3.5) {$(S^2,S^1\times \{1\})$};
\node (m) at (3,-4.5) {$(S^2,\Delta S^1)$};
\node (n) at (3,-5.5) {$(S^2,\{1\}\times S^1)$};
\node (o) at (6,-4.5) {$(S^6,\{1\})$};

\draw[thick, ->](j) to (l);
\draw[thick, ->](j) to (m);
\draw[thick, ->](j) to (n);
\draw[thick, ->](k) to (l);
\draw[thick, ->](k) to (m);
\draw[thick, ->](k) to (n);
\draw[thick, ->](l) to (o);
\draw[thick, ->](m) to (o);
\draw[thick, ->](n) to (o);

\node at (-3,0.75) {$(a,b)=(0,1)$};
\node at (-3,-1.75) {$(a,b)=(0,0)$};
\node at (-3,-4.5) {$(a,b)=(1,-1)$};

\draw[dashed] (-1,-6)--(-1,2);
\draw[dashed] (-6,-0.5)--(8,-0.5);
\draw[dashed] (-6,-3)--(8,-3);
\end{tikzpicture}

\end{center}
\end{ex}

To any $T$-space $X$ one associates its equivariant cohomology $H^*_T(X)$, which is the cohomology of its Borel construction $X_T:=ET\times_T X$, equipped with the structure of $H^*(BT)$-algebra induced by the natural projection $ET\times_T X\to BT$. We will abbreviate $R:=H^*(BT)$. {A $T$-equivariant map $X\rightarrow Y$ induces a map between Borel constructions and we obtain an induced map $H_T^*(Y)\rightarrow H_T^*(X)$. Additionally, for any subgroup $H\subset T$ we may set $EH=ET$ as $H$ acts freely on $ET$. Then the canonical projection $X_H\rightarrow X_T$ induces a map $H_T^*(X)\rightarrow H_H^*(X)$.}

\begin{lem}\label{lem:equivformal} For a $T$-space $X$ the following conditions are equivalent:
\begin{enumerate}[(i)]
\item $H^*_T(X)$ is a free $R$-module.
\item $H^*_T(X) = R\otimes H^*(X)$ as an $R$-module.
\item The map $H^*_T(X)\to H^*(X)$ induced by a fiber inclusion $X \to X\times_T ET$ is surjective.
\item $\dim_{\mathbb{Q}} H^*(X^T) = \dim_{\mathbb{Q}} H^*(X)$.
\end{enumerate}
\end{lem}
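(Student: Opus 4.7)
The main tool is the Serre spectral sequence of the Borel fibration $X \hookrightarrow X_T \to BT$, namely $E_2^{p,q} = R^p \otimes H^q(X) \Rightarrow H^*_T(X)$, combined with the Borel localization theorem and the Eilenberg--Moore spectral sequence. My plan is to verify $(2) \Leftrightarrow (3)$ via Leray--Hirsch and to tie (1) and (4) to (2) through separate arguments.

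The implications $(2) \Rightarrow (1)$ and $(2) \Rightarrow (3)$ are immediate: freeness is manifest, and the fiber restriction corresponds to quotienting by $R^+$. For $(3) \Rightarrow (2)$, I would invoke the classical Leray--Hirsch theorem: surjectivity of the fiber restriction allows me to lift a homogeneous basis of $H^*(X)$ to elements of $H^*_T(X)$, and the induced $R$-linear map $R \otimes H^*(X) \to H^*_T(X)$ is then shown to be an isomorphism by skeleton induction on $BT$ (or, equivalently, a comparison on the Serre spectral sequence).

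For $(1) \Rightarrow (2)$, I would invoke the Eilenberg--Moore spectral sequence associated to the pullback of $X_T \to BT$ along the inclusion $\mathrm{pt} \to BT$: $\mathrm{Tor}^R_\bullet(H^*_T(X), \mathbb{Q}) \Rightarrow H^*(X)$. Freeness of $H^*_T(X)$ over $R$ makes all higher Tors vanish, so the spectral sequence degenerates and yields $H^*(X) \cong H^*_T(X) \otimes_R \mathbb{Q}$; tensoring back with $R$ and using freeness gives $H^*_T(X) \cong R \otimes H^*(X)$ as $R$-modules, which is (2).

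Finally, for the equivalence with (4), I would use the Borel localization theorem, which gives $S^{-1} H^*_T(X) \cong S^{-1} R \otimes_\mathbb{Q} H^*(X^T)$ with $S$ the multiplicative set of nonzero homogeneous elements of $R$, and hence $\mathrm{rank}_R H^*_T(X) = \dim_\mathbb{Q} H^*(X^T)$. Combined with (2), which forces $\mathrm{rank}_R H^*_T(X) = \dim_\mathbb{Q} H^*(X)$, this gives $(2) \Rightarrow (4)$. For the converse $(4) \Rightarrow (2)$, the plan is to combine the coefficient-wise Poincar\'e series inequality $P_{H^*_T(X)}(t) \leq P_R(t) P_X(t)$ (from Serre SS convergence) with the asymptotic identity $(1-t^2)^{\dim T} P_{H^*_T(X)}(t) \to \dim_\mathbb{Q} H^*(X^T)$ as $t \to 1^-$ (from Borel localization); hypothesis (4) then forces these inequalities to be saturated, yielding Serre SS degeneration at $E_2$ and hence (2). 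The main obstacle is the $(1) \Rightarrow (2)$ step: freeness alone does not directly show that a free $R$-basis of $H^*_T(X)$ descends to a $\mathbb{Q}$-basis of $H^*(X)$, and the Eilenberg--Moore spectral sequence provides the cleanest route to this algebraic identification.
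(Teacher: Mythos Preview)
The paper does not give its own proof of this lemma; it simply records the equivalences as standard and cites \cite[Corollary 4.2.3]{AP} and \cite[Corollary IV.2]{Hs}. Your sketch therefore supplies what the paper omits, and the overall architecture---Leray--Hirsch for $(2)\Leftrightarrow(3)$, the Eilenberg--Moore spectral sequence for $(1)\Rightarrow(2)$, and Borel localization to link rank with $\dim H^*(X^T)$---is a standard and correct way to unpack those references.

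One step needs more care: in your argument for $(4)\Rightarrow(2)$, knowing that $(1-t^2)^{\dim T}\bigl(P_R(t)P_X(t)-P_{H_T^*(X)}(t)\bigr)\to 0$ as $t\to 1^-$ does \emph{not} by itself force the coefficient-wise inequality $P_{H_T^*(X)}(t)\le P_R(t)P_X(t)$ to be an equality for all $t$; a nonnegative power series can have vanishing ``leading asymptotics'' without being zero (e.g.\ $D(t)=t^2$ with $\dim T=1$). The clean fix is to exploit $R$-linearity of the Serre differentials: $E_2=R\otimes H^*(X)$ is free of rank $\dim H^*(X)$, and rank over the domain $R$ is additive in short exact sequences, so $\operatorname{rk} E_r=2\operatorname{rk}(\operatorname{im} d_r)+\operatorname{rk} E_{r+1}$. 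Your asymptotic identity gives $\operatorname{rk} E_\infty=\operatorname{rk} H_T^*(X)=\dim H^*(X^T)$, and hypothesis $(4)$ then forces $\operatorname{rk}(\operatorname{im} d_r)=0$ for all $r$; since $E_2$ is torsion-free, induction gives $d_r=0$ and $E_r=E_2$ for all $r$, hence degeneration. With this refinement your proof is complete.
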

These equivalences are standard, see e.g.\ \cite[Corollary 4.2.3]{AP} and  \cite[Corollary IV.2]{Hs}.

\begin{defn}\label{defn:eqformal} If a $T$-space satisfies {one of} the equivalent conditions of Lemma \ref{lem:equivformal} then we say that it is equivariantly formal.
\end{defn}
Let us recall the Borel localization theorem, see e.g.\ \cite[Theorem 3.2.6]{AP}: for any multiplicatively closed subset $S\subset R$, we set
\[
X^S = \{x\in X\mid S^{-1}H_T^*(Tx)\neq 0\},
\]
where $S^{-1}$ denotes localization at $S$.

\begin{thm}
Assume that either $X$ is compact or that it is paracompact and that the set of identity components of isotropy subgroups is finite. Then the inclusion $X^S\to X$ induces an isomorphism of $S^{-1}R$-modules $S^{-1}H^*_T(X) \to S^{-1}H^*_T(X^S)$.
\end{thm}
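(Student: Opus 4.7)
The plan is to deduce the isomorphism from the long exact sequence of the pair $(X_T,(X^S)_T)$ in Borel cohomology, after applying the exact functor $S^{-1}(-)$. Thus it suffices to show that $S^{-1}H_T^*(X,X^S)=0$. Since $X^S$ is closed and $T$-invariant in $X$, a standard excision / neighborhood-retraction argument (for which the local contractibility and Hausdorff hypotheses fixed at the start of the section are exactly what is needed) identifies this relative group with the Borel cohomology of $X\setminus X^S$, reducing the task to showing that $S^{-1}H_T^*$ vanishes on $T$-invariant open subsets of the complement.

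The starting point of the local analysis is the orbit-level computation: for any $x\in X\setminus X^S$ one has $H_T^*(Tx)\cong H^*(BT_x)$ since the Borel fibration $Tx\to (Tx)_T\to BT$ collapses to the map $BT_x\to BT$; by the very definition of $X^S$, this module is $S$-torsion and hence vanishes after localizing at $S$. I would next promote this to a statement about neighborhoods: under the topological hypotheses on $X$, every orbit $Tx\subset X\setminus X^S$ admits a $T$-invariant open neighborhood $U$ which equivariantly deformation retracts onto $Tx$, so that $S^{-1}H_T^*(U)\cong S^{-1}H_T^*(Tx)=0$.

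To pass from vanishing on a single tube to vanishing on all of $X\setminus X^S$, I would use a finite Mayer--Vietoris induction. Compactness of $X$, or paracompactness combined with finiteness of the set of identity components of isotropy subgroups, allows one to cover $X\setminus X^S$ by finitely many such tubes $\{U_i\}$ whose pairwise and higher intersections equivariantly retract onto subsets of individual $T$-orbits still lying outside $X^S$, and therefore also satisfy $S^{-1}H_T^*=0$. Since $S^{-1}(-)$ is exact, each Mayer--Vietoris sequence remains exact after localization, and an induction on the number of cover elements yields $S^{-1}H_T^*(X\setminus X^S)=0$, completing the proof.

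The main obstacle, in my view, is the construction of the invariant tube neighborhoods outside of the smooth category: in the smooth setting the slice theorem delivers them directly, but in the general topological setting one must appeal to structural results for actions of compact Lie groups on well-behaved spaces (Mostow--Palais-type invariant tube theorems, or, alternatively, a Leray spectral sequence argument for $X_T\to X/T$ whose stalks away from $X^S/T$ one shows to be $S$-torsion). The compactness or isotropy-finiteness hypothesis is exactly what is required to make the Mayer--Vietoris induction terminate; the excision step and the orbit computation are essentially formal once the local vanishing on tubes is available.
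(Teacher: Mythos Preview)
The paper does not prove this statement; it is recalled as the Borel localization theorem with a reference to \cite[Theorem~3.2.6]{AP}, so there is no in-paper argument to compare your proposal against.

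Your outline is the standard one and is essentially correct, with two imprecisions worth noting. First, excision does not literally identify $H_T^*(X,X^S)$ with the absolute group $H_T^*(X\setminus X^S)$: after retracting an invariant neighbourhood $V$ of $X^S$ and excising one obtains $H_T^*(X,X^S)\cong H_T^*(X\setminus X^S,\,V\setminus X^S)$, and the $S^{-1}$-vanishing of this relative group then follows from the long exact sequence of the pair once $S^{-1}H_T^*$ is known to vanish on both $X\setminus X^S$ and $V\setminus X^S$ --- which is what your tube argument is really aiming at. Second, intersections of distinct tubes do not in general retract onto single orbits; the clean way to run the induction is over the (finite) set of connected isotropy types occurring in $X\setminus X^S$, peeling off one closed stratum at a time via the long exact sequence of a pair, rather than a raw Mayer--Vietoris on tube neighbourhoods. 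With these repairs your sketch matches the textbook proof, and your identification of the slice/tube construction as the substantive topological input outside the smooth category is accurate.
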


We will be particularly interested in the following situation: for a subtorus $U\subset T$, let $\mathfrak{p}_U = \ker (H^*(BT)\to H^*(BU))$, and $S = R\setminus {\mathfrak{p}}_U$. Then $X^S = X^U$, the fixed point set of the restricted $U$-action, and we obtain an isomorphism
\begin{equation}\label{eq:borelU}
S^{-1}H^*_T(X) \longrightarrow S^{-1}H^*_T(X^U).
\end{equation}
For $U=T$ we obtain in particular that the kernel of the canonical map $H^*_T(X)\to H^*_T(X^T)$ is the {$R$-torsion} submodule of $H^*_T(X)$. Denoting by $X_1 = \{x\in X\mid \dim {Tx} \leq 1\}$ the one-skeleton of the action, we have a sequence
\begin{equation}\label{eq:shortexact}
0 \longrightarrow H^*_T(X) \longrightarrow H^*_T(X^T)\longrightarrow H^*_T(X_1,X),
\end{equation}
which, for an equivariantly formal action, is exact at $H^*_T(X)$.
\begin{lem}[Chang-Skjelbred Lemma {\cite[Lemma 2.3]{CS}}]\label{lem:CSL}
Assume that either $X$ is compact or that it is paracompact and that the set of identity components of isotropy subgroups is finite. Then for an equivariantly formal action of a torus $T$ on $X$ the sequence \eqref{eq:shortexact} is also exact at $H^*_T(X^T)$. 
\end{lem}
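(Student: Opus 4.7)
My plan is to prove the missing exactness at $H^*_T(X^T)$ by combining Borel localization at height-one primes with a depth argument for the cokernel $N := H^*_T(X^T)/\mathrm{im}\bigl(H^*_T(X)\to H^*_T(X^T)\bigr)$. The already-noted exactness at $H^*_T(X)$ is immediate from Borel localization at $S=R\setminus\{0\}$ together with equivariant formality: the kernel is $R$-torsion, but $H^*_T(X)$ is a free, and hence torsion-free, $R$-module.

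For the main exactness, the long exact sequence of the pair $(X,X^T)$ embeds $N$ as an $R$-submodule of $H^{*+1}_T(X,X^T)$, and the long exact sequence of the triple $(X,X_1,X^T)$ realizes the last map of the sequence \eqref{eq:shortexact} as the composition of this embedding with the natural map $H^{*+1}_T(X,X^T)\to H^{*+1}_T(X_1,X^T)$, whose kernel equals the image of $H^{*+1}_T(X,X_1)$. Thus exactness at $H^*_T(X^T)$ reduces to showing that $N\cap \mathrm{im}\bigl(H^{*+1}_T(X,X_1)\to H^{*+1}_T(X,X^T)\bigr)=0$.

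This is accomplished by controlling associated primes on both sides. First, applying Borel localization to the pair $(X,X_1)$ at the height-one prime $\mathfrak{p}_K$ of any corank-one subtorus $K\subset T$ yields $H^*_T(X,X_1)_{\mathfrak{p}_K} \cong H^*_T(X^K,X_1^K)_{\mathfrak{p}_K}=0$, since $X^K\subset X_1$ forces $X_1^K=X^K$; hence $H^*_T(X,X_1)$, and any image thereof, is supported in codimension $\geq 2$. Second, the short exact sequence $0\to H^*_T(X)\to H^*_T(X^T)\to N\to 0$ of finitely generated graded $R$-modules with its two leftmost terms free gives $\mathrm{depth}(N)\geq \dim R-1$ by the depth lemma; combined with $\dim N\leq \dim R-1$ (since $N$ is $R$-torsion) and the standard inequality $\mathrm{depth}(N)\leq \dim N$, this forces $N$ to be Cohen--Macaulay with every associated prime of height exactly one. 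A nonzero submodule of $N$ would therefore contain a height-one prime in its support, which is impossible if it also sits inside a module supported in codimension $\geq 2$; hence the intersection vanishes.

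The main technical point is this Cohen--Macaulay step, which relies on the full strength of equivariant formality (freeness of $H^*_T(X)$ over $R$) rather than mere torsion-freeness, and requires some care to make precise in the graded polynomial setting over $R=H^*(BT)$.
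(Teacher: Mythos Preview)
The paper does not give its own proof of this lemma; it simply cites Chang--Skjelbred. Your argument is a correct and standard modern proof (essentially the ``Atiyah--Bredon'' or depth-theoretic approach), so there is nothing to compare approach-wise. One small point deserves tightening, though.

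You write that localizing $H^*_T(X,X_1)$ at the primes $\mathfrak{p}_K$ for corank-one subtori $K$ and getting zero implies that $H^*_T(X,X_1)$ is supported in codimension $\geq 2$. As stated this is a gap: the primes $\mathfrak{p}_K$ are only the \emph{linear} height-one primes of $R=H^*(BT)$, whereas ``supported in codimension $\geq 2$'' requires vanishing at \emph{every} height-one prime. The fix is immediate using the general form of Borel localization already quoted in the paper. For an arbitrary height-one prime $\mathfrak{p}$ and $S=R\setminus\mathfrak{p}$, a point $x$ lies in $X^S$ iff $\mathfrak{p}_{U_x}\subset\mathfrak{p}$, where $U_x$ is the identity component of the isotropy at $x$; since $\mathfrak{p}_{U_x}$ is generated by linear forms, this forces $\mathrm{ht}\,\mathfrak{p}_{U_x}\leq 1$, i.e.\ $\dim Tx\leq 1$, so $X^S\subset X_1$ and hence $(X_1)^S=X^S$. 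The relative localization isomorphism (which follows from the absolute one by the five lemma on the long exact sequence of the pair) then gives $H^*_T(X,X_1)_{\mathfrak{p}}\cong H^*_T(X^S,X^S)_{\mathfrak{p}}=0$. Alternatively, you can leave your step as is and instead observe that the height-one associated primes of $N$ are automatically linear, since $N$ embeds in $H^{*+1}_T(X,X^T)$ and the same computation shows the latter vanishes at every non-linear height-one prime; then checking the linear primes $\mathfrak{p}_K$ suffices. With either patch, the rest of your argument (the depth lemma yielding $\mathrm{depth}\,N\geq\dim R-1$, the Cohen--Macaulay conclusion, and the support comparison) goes through cleanly.
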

In the following proposition we collect some well-known properties of equivariantly formal actions.
\begin{prop}\label{prop:eqforminherited} Consider an equivariantly formal $T$-action on a space $X$. Then the following hold true:
\begin{enumerate}
\item For any subtorus $U\subset T$, the induced $U$-action on $X$ is equivariantly formal.
\item For any subtorus $U\subset T$, the induced $T$-action on every component of $X^U$ is equivariantly formal.  
\end{enumerate}
\end{prop}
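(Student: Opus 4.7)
The plan is to deduce both statements from the dimensional characterization of equivariant formality given in item (4) of Lemma \ref{lem:equivformal}, combined with the Borel inequality $\dim_\mathbb{Q} H^*(Y^S) \leq \dim_\mathbb{Q} H^*(Y)$ for a torus $S$ acting on a suitably nice space $Y$. This inequality is a standard consequence of Borel localization, obtained by comparing the $K$-ranks of $S^{-1}H_T^*(Y)$ and $S^{-1}H_T^*(Y^S)$ where $K$ is the field of fractions of $R$, and it becomes an equality precisely when the action is equivariantly formal.

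For (1), the key observation is that the $T$-action on $X$ restricts to an action of the quotient torus $T/U$ on $X^U$, with $(X^U)^{T/U} = X^T$ since any point of $X^U$ is already $U$-fixed. Applying the Borel inequality first to the $U$-action on $X$ and then to the $T/U$-action on $X^U$ yields the chain
\[
\dim_\mathbb{Q} H^*(X^T) \leq \dim_\mathbb{Q} H^*(X^U) \leq \dim_\mathbb{Q} H^*(X).
\]
Equivariant formality of the $T$-action on $X$ forces the outer terms to agree, so both inequalities collapse to equalities. In particular $\dim_\mathbb{Q} H^*(X^U) = \dim_\mathbb{Q} H^*(X)$, which by Lemma \ref{lem:equivformal}(4) gives equivariant formality of the $U$-action.

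For (2), since $U$ acts trivially on each component $C$ of $X^U$, the $T$-action on $C$ factors through $T/U$, and $C^T$ is a union of components of $X^T$. The standing finite-dimensional cohomology assumption combined with part (1) forces $X^U$ to have only finitely many components $C_1, \dots, C_n$. Splitting cohomology over this decomposition and invoking the equality $\dim_\mathbb{Q} H^*(X^U) = \dim_\mathbb{Q} H^*(X^T)$ established in (1), I get
\[
\sum_{i=1}^n \dim_\mathbb{Q} H^*(C_i^T) \;=\; \dim_\mathbb{Q} H^*(X^T) \;=\; \dim_\mathbb{Q} H^*(X^U) \;=\; \sum_{i=1}^n \dim_\mathbb{Q} H^*(C_i).
\]
Since each summand satisfies the Borel inequality $\dim_\mathbb{Q} H^*(C_i^T) \leq \dim_\mathbb{Q} H^*(C_i)$, every individual inequality must be an equality, and another application of Lemma \ref{lem:equivformal}(4) yields equivariant formality of the $T$-action on each $C_i$.

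No step here should present a serious obstacle; the main point requiring care is verifying that the localization theorem, and hence the Borel inequality, applies to the intermediate spaces $X^U$ and to each component $C_i$. Under the standing assumption that $X$ is compact this is automatic, as fixed point sets of compact subgroup actions on a compact Hausdorff space are themselves compact and inherit local contractibility together with finite-dimensional rational cohomology.
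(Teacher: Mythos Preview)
Your proof is correct. For part (2) you and the paper proceed in essentially the same way, via the dimensional criterion Lemma \ref{lem:equivformal}(4) and the equality $\dim H^*(X^T) = \dim H^*(X^U)$, then passing to components. For part (1), however, you take a genuinely different route: the paper bypasses localization entirely by observing that the fiber-inclusion map $H^*_T(X) \to H^*(X)$ factors as $H^*_T(X) \to H^*_U(X) \to H^*(X)$, so surjectivity of the composite forces surjectivity of $H^*_U(X) \to H^*(X)$, and Lemma \ref{lem:equivformal}(3) finishes. That argument is shorter and sidesteps the question of whether the Borel inequality applies to the intermediate spaces. Your approach, on the other hand, is pleasantly uniform---both parts fall out of the same sandwich of inequalities---and it delivers the equality $\dim H^*(X^U) = \dim H^*(X^T)$ already in the course of proving (1), which is exactly the input needed for (2).
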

\begin{proof}
We observe that for any subtorus $U\subset T$ the map $H^*_T(X)\to H^*(X)$ induced by fiber inclusion factors as $H^*_T(X)\to H^*_U(X)\to H^*(X)$. Thus, if $H^*_T(X)\to H^*(X)$ is surjective, so is $H^*_U(X)\to H^*(X)$. The first statement then follows from Lemma \ref{lem:equivformal}.

Lemma \ref{lem:equivformal} then implies that $\dim H^*(X^T) = \dim H^*(X) = \dim H^*(X^U)$. As $(X^U)^T= X^T$, this implies, again via Lemma \ref{lem:equivformal}, that the $T$-action on $X^U$ is equivariantly formal. In general, a $T$-action is equivariantly formal if and only if the action on every connected component is equivariantly formal.
\end{proof}

\section{Orbit type stratification}

The purpose of this section is to recover information on the combinatorics of the orbit type stratification of a $T$-action from the algebraic data of its equivariant cohomology algebra $H^*_T(X)$. To this end we introduce the notion of Thom system, see Definition \ref{defn:thomsystem}. It is motivated by the fact that in certain smooth settings, a preferred choice of Thom system in the equivariant cohomology will be given by the equivariant Thom classes of the path-components of the fixed point set (cf.\ Lemma \ref{lem:minimalthomsystem}). 
\begin{rem}\label{rem:somewhatdually}
Somewhat dually to this approach, Quillen \cite{Q} and Allday \cite{All}, see also \cite[Section 3.6]{AP}, related the combinatorics of the action to the spectrum of the even degree part of the equivariant cohomology ring, by considering ideals of the form ${\mathfrak{p}}(K,c) = \ker(H^*_T(X)\to H^*(BK))$ where $K\subset T$ is a subtorus, $c$ a component of $X^K$, and the map is induced by the inclusion of a point in $c$. The set of all such pairs $(K,c)$ forms a poset $\mathcal{T}(X)$, where $(K,c)\leq (L,d)$ if and only if $K\subset L$ and $d\subset c$. The results in \cite{All} can be used to reconstruct $\mathcal{T}(X)$ from $H^*_T(X)$, in some sense substituting for the roles of Theorems \ref{thm:detectthomsys} and \ref{thm:inclusiondetector} in this section. The poset $\mathcal{T}(X)$ is not the same as the connected orbit type stratification (see Definition \ref{defn:orbitstratification}) as it does not detect whether, for $(K,c)\leq (L,d)$, the inclusion $d\subset c$ is strict. Rather, this poset corresponds, in the proof of Theorem \ref{thm:encodesstratification} below, to the poset $\chi'$. As we are interested in the more geometric orbit type stratification, we are led to the concept of ramification, see Definition \ref{defn:ramified} below.
\end{rem}

\begin{defn}\label{defn:thomsystem} For a (graded) commutative ring $A$, we call a (homogeneous) collection of elements $\tau_1,\ldots,\tau_k\in A$ a Thom system if
\begin{itemize}
\item
$\tau_i\cdot \tau_j$ is nilpotent whenever $i\neq j$
\item $\tau_i$ is not nilpotent
\item for any system $\alpha_1,\ldots,\alpha_l\in A$ satisfying the two preceding properties we have $l\leq k$.
\end{itemize}
\end{defn}

\begin{ex}
\begin{enumerate}[(i)]{
\item
As a purely algebraic example consider the ring $A=\mathbb{Q}[X,Y]/(Y^2-XY)$. Then $\tau_1=Y$ and $\tau_2=X-Y$ form a Thom system when considered as elements of $A$ by abuse of notation. Indeed observe that $Y^2-XY=Y\cdot (Y-X)$ is a prime decomposition and thus $\tau_1\tau_2=0$ and $\tau_i^k\neq 0$ for $k\geq 0$, $i=1,2$. Thus the first two properties of a Thom system are satisfied. It remains to prove that the maximal size of a collection  of elements with these properties is $2$. Assume for contradiction that $\eta_1,\eta_2,\eta_3\in A$ are elements satisfying the first two properties of a Thom system.
Consider representatives $f_1,f_2,f_3\in \mathbb{Q}[X,Y]$. Since every prime factor of $Y^2-XY$ occurs with power $1$ it follows that $A$ has no nonzero nilpotent elements. Consequently $Y^2-XY$ divides $f_1f_2$ but neither $f_1$ nor $f_2$ (since $\eta_1,\eta_2$ are nonzero).
We assume without loss of generality that $Y|f_1$ and $Y-X|f_2$. But then $\eta_1\eta_3=0$ implies $Y-X|f_3$ and $\eta_2\eta_3=0$ implies $Y|f_3$. Hence $Y^2-XY|f_3$ and $\eta_3=0$
which is a contradiction.
\item With $R$ as before, consider a graph $\Gamma$ with vertices $V(\Gamma)$, (non-oriented) edges $E(\Gamma)$, and a labelling function $\alpha\colon E(\Gamma)\rightarrow H^2(BT)\backslash \{0\}$. The graph cohomology $H^*(\Gamma,\alpha)$ in the sense of \cite{GKM, GZ} is defined as the subalgebra of $R^{V(\Gamma)}$ of elements $f$ such that for any $v,w\in V(\Gamma)$ that are connected by an edge $e\in E(\Gamma)$ we have $\alpha(e)|f(v)-f(w)$. The ring structure comes from componentwise multiplication in $R^{V(\Gamma)}$.
For a vertex $v$ let $E_v$ denote the set of edges emanating from $v$. One defines as in \cite[Section 2.3]{GZ0} the Thom class of $v$ as the element $f\in H^*(\Gamma,\alpha)$ with $f(v)=\prod_{e\in E_v} \alpha(e)$ and $f(w)=0$ for $v\neq w$. Now since $R$ has no nontrivial zero-divisors one easily checks that the set of all Thom classes of vertices of $\Gamma$ defines a Thom system of $H^*(\Gamma,\alpha)$. In fact, a collection $\tau_1,\ldots,\tau_k\in H^*(\Gamma,\alpha)$ is a Thom system if and only if each $\tau_i$ is nontrivial on a unique vertex and this correspondence between the $\tau_i$ and the vertices is bijective. Setting $R=\mathbb{Q}[Z]$ the algebra $A$ from part $(i)$ can be seen as the graph cohomology $H^*(\Gamma,\alpha)$ of}
\begin{center}
\begin{tikzpicture}
\draw[very thick] (0,0) -- ++(2,0);

  \node at (0,0)[circle,fill,inner sep=2pt]{};
   \node at (2,0)[circle,fill,inner sep=2pt]{};
   
   \node at (1,0.3){Z};
   
   \node at (-1.1,0){$(\Gamma,\alpha)=$};
\end{tikzpicture}
\end{center}
{
by including $A\rightarrow H^*(\Gamma,\alpha)\subset R\times R$ via $X\mapsto (Z,Z)$, $Y\mapsto(0,Z)$. In this way we see that $\tau_1,\tau_2$ from part $(i)$ correspond to the graph theoretic Thom classes $(Z,0)$ and $(0,Z)$ of $(\Gamma,\alpha)$.}
\end{enumerate}
\end{ex}

\begin{lem}\label{lem:trivialaction'}
Let $X$ be a path-connected space with trivial $T$-action and $\alpha\in H_T^*(X)$. The following are equivalent:
\begin{enumerate}[(i)]
\item $\alpha$ is not nilpotent.
\item $\alpha$ restricts to a nontrivial element in {$H_T^*(p)$ for any point $p\in X$}.
\item multiplication with $\alpha$ is injective on $H_T^*(X)$.
\end{enumerate}
\end{lem}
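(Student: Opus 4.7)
The plan is to exploit the fact that, since $T$ acts trivially on $X$, we have a Künneth-type isomorphism $H_T^*(X) \cong H^*(X) \otimes R$, and then reduce all three statements to a simple algebraic statement about this tensor product. Using that $X$ is path-connected, $H^0(X) = \mathbb{Q}$, and the restriction map along the inclusion of any point $x \in X$ is the projection $H^*(X) \otimes R \to H^0(X) \otimes R = R$. So any $\alpha \in H_T^*(X)$ decomposes uniquely as $\alpha = 1 \otimes p + \alpha'$ with $p \in R$ and $\alpha' \in H^+(X) \otimes R$, and condition (ii) is precisely the statement that $p \neq 0$ (independently of the chosen point).

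The key observation is that $H^+(X)$ is a nilpotent ideal in $H^*(X)$ (since $H^*(X)$ is a finite-dimensional graded-commutative $\mathbb{Q}$-algebra), which makes $\alpha'$ nilpotent in $H^*(X) \otimes R$. For (i) $\Leftrightarrow$ (ii), if $p = 0$ then $\alpha = \alpha'$ is nilpotent; conversely, if $p \neq 0$ then the ring homomorphism to $R$ sends $\alpha^k$ to $p^k$, which is nonzero because $R = \mathbb{Q}[t_1,\dots,t_r]$ is an integral domain, so $\alpha$ cannot be nilpotent. The implication (iii) $\Rightarrow$ (i) is immediate: if multiplication by $\alpha$ is injective then applying it repeatedly to $1$ shows $\alpha^k \neq 0$ for all $k$.

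The one implication that deserves a genuine argument is (ii) $\Rightarrow$ (iii), and this is the step I expect to be the main obstacle in the sense of needing an actual construction. I would use the decreasing filtration $F^k := H^{\geq k}(X) \otimes R$ of $H^*(X) \otimes R$ by ideals, which terminates at $F^{d+1} = 0$ where $d$ is the top degree of $H^*(X)$. Since $\alpha - 1\otimes p \in F^1$, multiplication by $\alpha$ preserves the filtration and acts on each graded piece $F^k / F^{k+1} \cong H^k(X) \otimes R$ as multiplication by $p$, which is injective because $R$ is a domain with $p \neq 0$ and $H^k(X) \otimes R$ is a free $R$-module. If $\beta \neq 0$ satisfies $\alpha\beta = 0$, take $k$ maximal with $\beta \in F^k$; then the class $\bar\beta \in F^k/F^{k+1}$ is nonzero, but $\alpha\bar\beta = p\bar\beta = 0$, contradicting injectivity on the graded piece. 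Hence multiplication by $\alpha$ is injective on all of $H_T^*(X)$.
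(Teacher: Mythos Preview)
Your proof is correct and follows essentially the same approach as the paper's: both use the identification $H_T^*(X)=R\otimes H^*(X)$, characterize nilpotent elements as those lying in $R\otimes H^+(X)$, and prove (ii)$\Rightarrow$(iii) via the filtration by $R\otimes H^{\geq k}(X)$, on whose associated graded pieces multiplication by $\alpha$ acts as multiplication by the nonzero element $p\in R$. Your write-up is simply more explicit about the filtration step that the paper compresses into a single sentence.
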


\begin{proof}
We have $H_T^*(X)=R\otimes H^*(X)$ as $R$-algebras. Thus an element is nilpotent if and only if it is contained in $R\otimes H^+(X)$. This proves the equivalence of $(i)$ and $(ii)$. Clearly also $(iii)$ implies $(i)$. Finally, assume that $(ii)$ holds and write the nontrivial $R\otimes H^0(X)$-component of $\alpha$ as $f\otimes 1$. It follows that multiplying an element of $R\otimes H^{\geq k}(X)$ with $\alpha$ multiplies its $R\otimes H^k(X)$ component with $f$. Thus multiplication with $\alpha$ is injective.
\end{proof}

\begin{prop}\label{prop:TSchar'}
Let $X$ be a space with trivial $T$-action. Then $H_T^*(X)$ admits a Thom system. A collection $\tau_1,\ldots,\tau_k\in H_T^*(X)$ is a Thom system if and only if $X$ has $k$ connected components $X_1,\ldots,X_k$ which can be numbered in a way such that for any choice of points $p_i\in X_i$
\begin{itemize}
\item $\tau_i$ restricts to $0$ in $H_T^*(p_j)$ for $i\neq j$
\item the restriction of $\tau_i$ to $H_T^*(p_i)$ is not $0$.
\end{itemize}
\end{prop}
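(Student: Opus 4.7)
The plan is to reduce everything to the decomposition by connected components and then apply Lemma \ref{lem:trivialaction'} componentwise. First I would write $X = X_1 \sqcup \cdots \sqcup X_m$ (finitely many components, using that $\dim H^0(X) < \infty$); since the action is trivial, $H_T^*(X)$ splits as $\bigoplus_i H_T^*(X_i)$, and any $\alpha \in H_T^*(X)$ decomposes as $\sum_i \alpha^{(i)}$. I would introduce the \emph{support} $\mathrm{supp}(\alpha) = \{i : \alpha^{(i)} \text{ is not nilpotent}\}$ and note, via Lemma \ref{lem:trivialaction'}, that this is exactly the set of indices $i$ for which $\alpha^{(i)}$ restricts nontrivially at (any) point of $X_i$.

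The key step --- and the main substance of the argument --- is to show that whenever $\mathrm{supp}(\alpha) \cap \mathrm{supp}(\beta) \neq \emptyset$, the product $\alpha\beta$ is not nilpotent. This uses integrality of $R = H^*(BT)$: restricting at a point $p \in X_i$ with $i$ in the intersection gives nonzero elements in the domain $R$, whose product is nonzero, so $(\alpha\beta)^{(i)}$ restricts nontrivially and is not nilpotent. An immediate consequence is that any collection $\tau_1, \ldots, \tau_k$ satisfying the first two bullets of Definition \ref{defn:thomsystem} has pairwise disjoint nonempty supports, forcing $k \leq m$.

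To realize this bound, I would use the orthogonal idempotents $e_i \in H^0(X) \subset H_T^*(X)$ defined by $e_i|_{X_j} = \delta_{ij}$. They visibly satisfy the first two bullets and so form a Thom system of size $m$, which both proves existence and pins down the maximum as exactly $m$. For any Thom system $\tau_1, \ldots, \tau_m$ the disjoint nonempty supports must now be singletons, say $\mathrm{supp}(\tau_j) = \{\sigma(j)\}$ for a permutation $\sigma$; after relabeling components we may take $\sigma = \mathrm{id}$. Then $\tau_i^{(j)}$ is nilpotent for $i \neq j$, and since $X_j$ is path-connected and $R$ has no nonzero nilpotents, Lemma \ref{lem:trivialaction'} forces the restriction to any $p_j \in X_j$ to vanish, yielding the first claimed restriction property; non-nilpotency of $\tau_i^{(i)}$ supplies the second. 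The converse direction --- that any collection with the stated restriction properties is a Thom system --- is then automatic, since such a collection has pairwise disjoint singleton supports and already realizes the maximum size $m$.

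The only real obstacle is the integrality argument ruling out overlapping supports; everything else is organized bookkeeping around the splitting of $H_T^*(X)$ and the characterization of non-nilpotency in Lemma \ref{lem:trivialaction'}.
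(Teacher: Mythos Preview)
Your proof is correct and follows essentially the same path as the paper: both arguments use Lemma \ref{lem:trivialaction'} to identify non-nilpotency with nontrivial restriction at points, the integrality of $R = H_T^*(*)$ to rule out two $\tau_i$ living nontrivially over the same component, and the orthogonal idempotents $e_i$ to exhibit a system of the maximal size $k$. Your notion of ``support'' is a mild repackaging of the same bookkeeping, and your explicit treatment of the converse direction is the only small addition beyond what the paper spells out.
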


\begin{proof}
Let $\tau_1,\ldots,\tau_l\in H_T^*(X)$ be elements satisfying the first two conditions in the definition of a Thom system and let $X_1,\ldots,X_k$ be the components of $X$. Choose $p_i\in X_i$. As the $\tau_i$ are not nilpotent, Lemma \ref{lem:trivialaction'} shows that they restrict nontrivially to at least one of the $H_T^*(p_i)$. Also, since the {rings} $H_T^*(p_i)$ are integral domains it follows that no two of the $\tau_i$ restrict nontrivially to the same point. Thus it follows that $l\leq k$ and that, if $l=k$, then the $\tau_i$ correspond bijectively to the $X_i$ in the manner described in the proposition (after possibly adjusting the order). Thus it remains to argue that a Thom system has $k$ elements. This follows from the fact that the first two conditions in the definition of a Thom system are satisfied by the elements $e_1,\ldots,e_k$ defined by the condition that $e_i$ restricts to $1$ in $H_T^*(X_i)$ and to $0$ in $H_T^*(X_j)$ for $j\neq i$.
\end{proof}

For a subtorus $U\subset T$ of a torus $T$ we denote 
\[
\mathfrak{p}_U:=\ker (H^*(BT)\rightarrow H^*(BU))
,\]
as well as $S_U:=R\setminus\mathfrak{p}_U$. For a space $X$ with $T$-action, we will also consider $H^*_U(X)$ as an algebra over $R$, via the map $H^*(BT)\rightarrow H^*(BU)$.

\begin{lem}\label{lem:kernel}
Let $X$ be a compact $T$-space, and $U\subset T$ a subtorus which acts trivially on $X$. If $x\in \ker ( H_T^*(X)\rightarrow H_U^*(X))$ then it is nilpotent in $H_T^*(X)/\mathfrak{p}_UH_T^*(X)$.
\end{lem}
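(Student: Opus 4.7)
The plan is to exploit the splitting of compact tori to reduce the lemma to a grading argument. Since $U$ is a subtorus of the compact torus $T$, there exists a complementary subtorus $T'\subseteq T$ with $T=U\times T'$; correspondingly $BT=BU\times BT'$, and since $U$ acts trivially on $X$, the Borel construction decomposes as $X_T\simeq BU\times X_{T'}$. The Künneth formula (applicable because $H^*(BU)$ has finite type) yields isomorphisms
\[
H^*_T(X)\;\cong\; R_U\otimes H^*_{T'}(X),\qquad H^*_U(X)\;\cong\; R_U\otimes H^*(X),
\]
compatible with the $R=R_U\otimes R_{T'}$-module structure, where $R_U:=H^*(BU)$ and $R_{T'}:=H^*(BT')$. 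Under these identifications the natural map $H^*_T(X)\to H^*_U(X)$ becomes $\id\otimes\iota^*$ with $\iota^*\colon H^*_{T'}(X)\to H^*(X)$ the restriction to the fiber $X\hookrightarrow X_{T'}$, while $\mathfrak{p}_U=R_U\otimes R_{T'}^+$ acts through $R_{T'}$, giving
\[
H^*_T(X)/\mathfrak{p}_U H^*_T(X)\;\cong\; R_U\otimes Q,\qquad Q:=H^*_{T'}(X)/R_{T'}^+\,H^*_{T'}(X).
\]

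I would then observe that the kernel of $\id\otimes\iota^*$ is $R_U\otimes\ker\iota^*$, whose image in $R_U\otimes Q$ is $R_U\otimes\ker\bar\iota^*$, where $\bar\iota^*\colon Q\to H^*(X)$ is the map induced by $\iota^*$. Since $R_U$ is a reduced polynomial ring and since in a commutative ring the nilpotents form an ideal, the identity $(r\otimes n)^k=r^k\otimes n^k$ shows that $R_U\otimes N$ consists of nilpotent elements whenever $N\subseteq Q$ does. It therefore suffices to prove that $\ker\bar\iota^*\subseteq Q$ consists of nilpotents.

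For this last step, compactness of $X$ forces $H^*_{T'}(X)$ to be a finitely generated $R_{T'}$-module---as one sees from the Serre spectral sequence $R_{T'}\otimes H^*(X)\Rightarrow H^*_{T'}(X)$ together with the finite dimensionality of $H^*(X)$. By graded Nakayama, $Q$ is then a finite-dimensional graded $\mathbb{Q}$-algebra, concentrated in degrees $\leq D$ for some $D$. In particular every element $y\in Q^{>0}$ satisfies $y^n\in Q^{n|y|}=0$ as soon as $n|y|>D$, and so is nilpotent. On the other hand, because $R_{T'}^+$ lives in degrees $\geq 2$, one has $Q^0=H^0_{T'}(X)=H^0(X_{T'})$, and since $T'$ is connected the inclusion $X\hookrightarrow X_{T'}$ induces a bijection of path components, making $\bar\iota^*$ an isomorphism in degree $0$. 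Consequently $\ker\bar\iota^*\subseteq Q^{>0}$, which consists of nilpotents, finishing the argument. The step requiring the most care is verifying that the Künneth splitting is compatible with the $R$-algebra structure and with the map $H^*_T(X)\to H^*_U(X)$, so that the identification of $H^*_T(X)/\mathfrak{p}_U H^*_T(X)$ with $R_U\otimes Q$ is really the quotient one wants; the remaining algebra is then elementary.
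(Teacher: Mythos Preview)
Your proof is correct and follows essentially the same route as the paper's: split $T=U\times T'$, use K\"unneth to write $H_T^*(X)\cong H^*(BU)\otimes H_{T'}^*(X)$ with the restriction becoming $\id\otimes\iota^*$, observe that elements of the kernel have vanishing component in $T'$-degree~$0$, and then use finite generation of $H_{T'}^*(X)$ over $R_{T'}$ to force nilpotence modulo $\mathfrak{p}_U$. The only difference is cosmetic: you pass to the quotient $Q=H_{T'}^*(X)/R_{T'}^+H_{T'}^*(X)$ and invoke graded Nakayama to see that $Q$ is finite-dimensional (hence $Q^{>0}$ is nilpotent), whereas the paper argues directly in $H_{T'}^*(X)$ that a product of sufficiently many positive-degree elements lies in $R_{T'}^+\cdot H_{T'}^*(X)$. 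These are two phrasings of the same fact.
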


\begin{proof}
Let $S$ be a subtorus of $T$ which is complementary to $U$, i.e.\ $T=U\times S$. Then $R=H^*(BU)\otimes H^*(BS)$ and $H_T^*(X)\cong H^*(BU)\otimes H_S^*(X)$ as $R$-algebras with the obvious $R$-algebra structure. Furthermore $H_U^*(X)\cong H^*(BU)\otimes H^*(X)$ and the restriction $r\colon H_T^*(X)\rightarrow H_U^*(X)$ corresponds to
\[\id_{H^*(BU)}\otimes r'\colon H^*(BU) \otimes H_S^*(X)\rightarrow H^*(BU)\otimes H^*(X)\]
where $r'$ is the restriction $H_S^*(X)\rightarrow H^*(X)$. Both algebras above inherit a bigrading with respect to the tensor product. The bigrading is respected by $r$.

If $x$ lies in the kernel of $r$ then its $H^*(BU)\otimes H^0_S(X)$ component is zero. Hence $x\in H^*(BU)\otimes H^+_S(X)$. Note that for $N$ large enough, any product of $N$ elements of $H^+_S(X)$ lies in $H^+(BS)\cdot H_S^*(X)$. This follows from the fact that $H_S^*(X)$ is finitely generated as $H^*(BS)$-module \cite[Prop.\ 3.10.1]{AP} where we put $N$ larger than the highest degree in a {generating set of $H_S^*(X)$ over $H^*(BS)$}. Consequently, $x^N$ lies in $H^+(BS)\cdot H_T^*(X)$ and in particular in $\mathfrak{p}_U H_T^*(X)$.
\end{proof}

\begin{lem}\label{lem:nilpotencelem}
Let $X$ be a compact $T$-space and $U\subset T$ a subtorus. Then for any $x\in H_T^*(X)$ the image of $x$ in $H_U^*(X^U)$ is nilpotent if and only if the image of $x$ in $S_U^{-1}(H_T^*(X)/\mathfrak{p}_UH_T^*(X))$ is nilpotent.
\end{lem}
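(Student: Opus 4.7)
The plan is to reduce the statement to the special case $X=X^U$ by means of Borel localization, and then to combine Lemma~\ref{lem:kernel} with the fact that $H^*_U(X^U)$ is a free $H^*(BU)$-module. First I would invoke the Borel localization isomorphism \eqref{eq:borelU}, $S_U^{-1}H^*_T(X)\cong S_U^{-1}H^*_T(X^U)$; since localization commutes with quotients, this descends to an isomorphism
\[
S_U^{-1}\bigl(H^*_T(X)/\mathfrak{p}_U H^*_T(X)\bigr)\;\cong\; S_U^{-1}\bigl(H^*_T(X^U)/\mathfrak{p}_U H^*_T(X^U)\bigr)
\]
identifying the image of $x$ with the image of its restriction $y\in H^*_T(X^U)$. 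So I may replace $x$ by $y$ and henceforth assume $X=X^U$, a compact $T$-space on which $U$ acts trivially.

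The ``only if'' direction is then essentially Lemma~\ref{lem:kernel}: if some power $y^N$ maps to $0$ in $H^*_U(X^U)$, then $y^N$, and hence $y$, is already nilpotent in the unlocalized quotient $H^*_T(X^U)/\mathfrak{p}_UH^*_T(X^U)$, and this nilpotency passes to the localization.

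The substance of the lemma is the converse, and this is where I expect the only real obstacle: a priori, inverting the elements of $S_U$ could produce new nilpotents. Unraveling, the hypothesis supplies $s\in S_U$ and $N\geq 1$ with $sy^N\in \mathfrak{p}_U H^*_T(X^U)$. Pushing this forward under the restriction $H^*_T(X^U)\to H^*_U(X^U)$, which kills $\mathfrak{p}_U$, yields $\bar s\cdot\bar y^N = 0$ in $H^*_U(X^U)$, with $\bar s\in H^*(BU)$ nonzero. To cancel $\bar s$ I would use the triviality of the $U$-action on $X^U$, which by K\"unneth gives $H^*_U(X^U)\cong H^*(BU)\otimes H^*(X^U)$; as a free module over the integral domain $H^*(BU)$, multiplication by the nonzero homogeneous element $\bar s$ is injective, and $\bar y^N = 0$ follows.
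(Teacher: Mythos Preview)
Your argument is correct and uses the same ingredients as the paper's proof---Borel localization \eqref{eq:borelU}, Lemma~\ref{lem:kernel}, and the freeness of $H^*_U(X^U)\cong H^*(BU)\otimes H^*(X^U)$ over the domain $H^*(BU)$---only organized slightly differently: you first reduce to $X=X^U$ via the localized quotient isomorphism and then handle both directions there, whereas the paper treats the two implications separately and invokes Borel localization only for the ``if'' direction. The content is the same; your packaging is arguably a bit cleaner.
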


\begin{proof}
As $\mathfrak{p}_UH^*_T(X)$ is contained in the kernel of $H^*_T(X)\to H^*_U(X)$, the restriction map $H_T^*(X)\to H_U^*(X^U)$ factors as
\[
H^*_T(X)\to H^*_T(X)/\mathfrak{p}_UH^*_T(X) { \to H^*_U(X)} \to H_U^*(X^U).
\]
Applying localization at $S_U$, it follows that if the image of $x$ in $S_U^{-1}H_T^*(X)/S_U^{-1}\mathfrak{p}_UH_T^*(X)$ is nilpotent, then the same holds for the image in $S_U^{-1}H_U^*(X^U)$. An element of $H_U^*(X^U)$ is nilpotent if and only if it is nilpotent in $S_U^{-1}H_U^*(X^U)$ which proves one direction.

Assume conversely that the image of $x$ in $H_U^*(X^U)$ is nilpotent. Then some power $x^k$ maps to the kernel of $H_T^*(X^U)\rightarrow H_U^*(X^U)$. By Lemma \ref{lem:kernel}, some higher power $x^N$ satisfies $f(x^N)\in \mathfrak{p}_U H_T^*(X^U)$ where $f$ is the map $H_T^*(X)\rightarrow H_T^*(X^U)$. By Borel localization the map
\[S_U^{-1}f\colon S_U^{-1} H_T^*(X)\rightarrow S_U^{-1}H_T^*(X^U)\]
is an isomorphism of $S_U^{-1}R$-modules, see \eqref{eq:borelU}. Since $S_U^{-1}f(x^N)$ is in $S_U^{-1}\mathfrak{p}_U H_T^*(X^U)$ it follows that the image of $x^N$ in $S_U^{-1}H_T^*(X)$ lies in $S_U^{-1}\mathfrak{p}_U H_T^*(X)$. The Lemma follows from the fact that localization commutes with taking quotients, i.e.\ $S_U^{-1}H_T^*(X)/S_U^{-1}\mathfrak{p}_U H_T^*(X)\cong S_U^{-1}(H_T^*(X)/\mathfrak{p}_UH_T^*(X))$.
\end{proof}

\begin{rem}\label{rem:sphericalstuff}
With regards to the above lemma and the theorem below, we remark that in general $S_U^{-1}(H_T^*(X)/\mathfrak{p}_UH_T^*(X))$ and $S_U^{-1}H_U^*(X^U)$ are not isomorphic. Also, we only obtain criteria for elements to restrict to nilpotent elements in $H_U^*(X^U)$ without a precise description on the kernel. The reason for this is the fact that in general the kernel of the restriction $H_T^*(X)\rightarrow H_U^*(X)$ is larger than $\mathfrak{p}_U H_T^*(X)$ and may additionally contain certain Massey products involving elements of $\mathfrak{p}_U$. This phenomenon is discussed in \cite{AZ} under the name of spherical actions.
\end{rem}

\begin{thm}\label{thm:detectthomsys}
Let $X$ be a compact $T$-space and $U\subset T$ a subtorus. Then there are elements $\tau_1,\ldots,\tau_k\in H_T^*(X)$ which restrict to a Thom system of $H_U^*(X^U)$. A set of elements $\tau_1,\ldots,\tau_k\in H_T^*(X)$ has this property if and only if it restricts to a Thom system in $S^{-1}_U(H_T^*(X)/\mathfrak{p}_U H_T^*(X))$.
\end{thm}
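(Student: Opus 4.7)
The proof has two parts: the existence claim and the equivalence. The common engine is the Borel localization isomorphism \eqref{eq:borelU} together with Lemmas \ref{lem:kernel} and \ref{lem:nilpotencelem}.

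For existence, let $X_1,\ldots,X_k$ be the connected components of $X^U$ and consider the orthogonal idempotents $\hat e_i\in H_T^*(X^U)=\prod_j H_T^*(X_j)$. Their images $e_i\in H_U^*(X^U)$ form the canonical Thom system produced by Proposition \ref{prop:TSchar'}. By Borel localization, each $\hat e_i$ admits a multiple $r_i\hat e_i$, with $r_i\in S_U$, lying in the image of $H_T^*(X)\to H_T^*(X^U)$; pick $\tau_i\in H_T^*(X)$ mapping to $r_i\hat e_i$. The further restriction of $\tau_i$ to $H_U^*(X^U)$ is then $\bar r_i\cdot e_i$, where $\bar r_i\in H^*(BU)$ is the image of $r_i$ and is nonzero since $r_i\notin\mathfrak{p}_U$. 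The products $(\bar r_ie_i)(\bar r_je_j)$ vanish for $i\ne j$, each $\bar r_ie_i$ is non-nilpotent by Lemma \ref{lem:trivialaction'} (as it restricts to $\bar r_i\ne 0$ at each point of $X_i$), and the collection has length $k$ equal to the number of components of $X^U$, so Proposition \ref{prop:TSchar'} yields that these restrictions form a Thom system.

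For the equivalence, I would factor the restriction $H_T^*(X)\to H_U^*(X^U)$ as
\[
H_T^*(X) \longrightarrow S^{-1}_U\bigl(H_T^*(X)/\mathfrak{p}_UH_T^*(X)\bigr) \xrightarrow{\;\psi\;} S^{-1}_U H_U^*(X^U),
\]
and the crux is to verify that $\psi$ is surjective with nilpotent kernel. Surjectivity reduces, via Borel localization, to surjectivity of $H_T^*(X^U)\to H_U^*(X^U)$, which follows from a splitting $T=U\times V$ and the resulting tensor decomposition, exactly as in the proof of Lemma \ref{lem:kernel}. The nilpotent-kernel property is essentially Lemma \ref{lem:kernel} applied to $X^U$ combined with the fact that localization commutes with quotients; this is the same packaging used in the proof of Lemma \ref{lem:nilpotencelem}.

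With $\psi$ in hand, the three conditions defining a Thom system transfer in both directions. Properties (a) and (b) transfer because nilpotence is detected on either side of a surjection with nilpotent kernel. For maximality, any system in the source satisfying (a),(b) maps to one in the target, and conversely any system in the target lifts through $\psi$ to one in the source satisfying the same conditions, so the two maximum lengths agree. A final short observation shows that passing from $H_U^*(X^U)$ to $S^{-1}_U H_U^*(X^U)$ does not change nilpotence or Thom system length, since $S_U\subset H^*(BU)$ consists of non-zero-divisors in $H_U^*(X^U)=H^*(BU)\otimes H^*(X^U)$. The main obstacle is thus the joint verification of surjectivity and nilpotent kernel for $\psi$; once established, the characterization follows by a formal transfer argument using Proposition \ref{prop:TSchar'} to pin down the maximum Thom system length on the $H_U^*(X^U)$ side.
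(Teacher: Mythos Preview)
Your existence argument is correct and essentially identical to the paper's: lift multiples of the idempotents via Borel localization and check the pointwise criterion from Proposition~\ref{prop:TSchar'}.

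The equivalence argument, however, contains a genuine gap. You assert that
\[
\psi\colon S_U^{-1}\bigl(H_T^*(X)/\mathfrak{p}_U H_T^*(X)\bigr)\longrightarrow S_U^{-1}H_U^*(X^U)
\]
is surjective, and that this ``follows from a splitting $T=U\times V$ and the resulting tensor decomposition.'' It does not. After Borel localization the source identifies with $S_U^{-1}\bigl(H_T^*(X^U)/\mathfrak{p}_U H_T^*(X^U)\bigr)$, and with the splitting $T=U\times V$ one has $H_T^*(X^U)\cong H^*(BU)\otimes H_V^*(X^U)$ and $H_U^*(X^U)\cong H^*(BU)\otimes H^*(X^U)$. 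The map $\psi$ is then (after inverting the nonzero elements of $H^*(BU)$) induced by the restriction $r'\colon H_V^*(X^U)\to H^*(X^U)$. But $r'$ is surjective precisely when the $V$-action on $X^U$ is equivariantly formal, which is \emph{not} assumed here. The paper explicitly warns about this in Remark~\ref{rem:sphericalstuff}: in general $S_U^{-1}(H_T^*(X)/\mathfrak{p}_U H_T^*(X))$ and $S_U^{-1}H_U^*(X^U)$ are not isomorphic, and one only gets a nilpotence criterion, not a description of the full kernel or image.

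The fix is to drop surjectivity entirely. Your nilpotent-kernel claim is correct (it is Lemma~\ref{lem:kernel} applied to $X^U$, packaged as in Lemma~\ref{lem:nilpotencelem}), and that alone transfers conditions (a) and (b) of a Thom system between the two rings for elements coming from $H_T^*(X)$. For maximality you do not need to lift systems from the target; instead, argue as the paper does: any system in $S_U^{-1}(H_T^*(X)/\mathfrak{p}_U H_T^*(X))$ satisfying (a),(b) can, after clearing $S_U$-denominators, be assumed to come from $H_T^*(X)$ and hence maps to a system in $H_U^*(X^U)$, giving the upper bound $l\le k$. The lower bound $l\ge k$ is supplied by the explicit $\tau_i$ from your existence argument together with Lemma~\ref{lem:nilpotencelem}. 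No lifting through $\psi$ is required.
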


\begin{proof}
By Lemma \ref{lem:nilpotencelem}, $\tau_1,\ldots,\tau_r\in H_T^*(X)$ satisfy the first two conditions of a Thom system when restricted to $H_U^*(X^U)$ if and only if they do so in $S_U^{-1}(H_T^*(X)/\mathfrak{p}_U H_T^*(X))$. Let $\eta_1,\ldots,\eta_l\in S_U^{-1}(H_T^*(X)/\mathfrak{p}_U H_T^*(X))$ be a set satisfying the nilpotence conditions of a Thom system. Then multiplying the $\eta_i$ with elements of $S_U$ preserves these conditions. Consequently, we may assume that the $\eta_i$ are restrictions from $H_T^*(X)$. Since $H_U^*(X^U)$ admits a Thom system by Proposition \ref{prop:TSchar'}, it follows that $l\leq k$, where $k$ is the number of elements in a Thom system of $H_U^*(X^U)$, i.e., the number of path components of $X^U$. In particular $S_U^{-1}(H_T^*(X)/\mathfrak{p}_U H_T^*(X))$ admits a Thom system which lies in the image of $H_T^*(X)\rightarrow S_U^{-1}(H_T^*(X)/\mathfrak{p}_U H_T^*(X))$.

The theorem follows if we show that the image of $H_T^*(X)\rightarrow H_U^*(X^U)$ contains a Thom system of $H_U^*(X^U)$. Let $X_1,\ldots,X_k$ be the components of $X^U$ and let $e_i\in H_T^*(X^U)$ be the element which restricts to $1\in H_T^*(X_i)$ and to $0\in H_T^*(X_j)$ for $i\neq j$. By Borel Localization there are polynomials $f_i\in S$ such that $f_i\cdot e_i$ is the restriction of some $\tau_i\in H_T^*(X)$. It remains to check that the $\tau_i$ restrict to a Thom system of $H_U^*(X^U)$. Choose points $p_i\in X_i$. The composition
\[H_T^*(X)\rightarrow H_T^*(X^U)\rightarrow H_U^*(X^U)\rightarrow H_U^*(p_i)\]
is $R$-linear. Thus $\tau_i$ maps to $f_i\cdot 1\in H_U^*(p_i)$, which is nonzero because $f\in S_U$, and to $0\in H_U^*(p_j)$ for $i\neq j$. Thus the $\tau_i$ restrict to a Thom system in $H_U^*(X^U)$ by Lemma \ref{lem:trivialaction'}.
\end{proof}

\begin{defn}
For a subtorus $U\subset T$, we call a set of elements $\tau_1,\ldots,\tau_k\in H_T^*(X)$ with the property as in Theorem \ref{thm:detectthomsys} a \emph{$U$-local Thom system} (of $H_T^*(X)$). Given such a system, we denote by $F_U(\tau_i)$ the unique component of $X^U$ such that $\tau_i$ restricts to a nonzero element in $H_U^*(p_i)$ for any point $p_i\in F_U(\tau_i)$.
\end{defn}

\begin{thm} \label{thm:inclusiondetector}
Let $X$ be a compact $T$-space and $H\subset U\subset T$ subtori. Let $\tau_1,\ldots,\tau_k\in H_T^*(X)$ be a $U$-local Thom system and $\eta_1,\ldots,\eta_l$ be an $H$-local Thom system. Then {for any $i,j$},  $F_U(\tau_i)\subset F_H(\eta_j)$ if and only if there is $f\in S_H$ such that the image of $f\tau_i-\eta_j\tau_i$ in $S^{-1}_U(H_T^*(X)/\mathfrak{p}_U H_T^*(X))$ is nilpotent.
\end{thm}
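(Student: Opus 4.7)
The plan is to use Lemma \ref{lem:nilpotencelem} to translate the nilpotence condition in the localization into an equivalent nilpotence statement in $H_U^*(X^U)$, reduce this via Proposition \ref{prop:TSchar'} and the tensor product decomposition over $H^*(BU)$ to a polynomial identity at a point of $F_U(\tau_i)$, and finally convert this identity into geometric information using the chain of restrictions $R\to H^*(BU)\to H^*(BH)$ together with the Thom system property of the $\eta_j$'s at the $H$-level.

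First, by Lemma \ref{lem:nilpotencelem} nilpotence of $f\tau_i-\eta_j\tau_i$ in $S_U^{-1}(H_T^*(X)/\mathfrak{p}_U H_T^*(X))$ is equivalent to nilpotence of its image in $H_U^*(X^U)$. On every component $C\neq F_U(\tau_i)$ of $X^U$, Proposition \ref{prop:TSchar'} combined with Lemma \ref{lem:trivialaction'} forces $\tau_i|_C$ to be nilpotent, and hence so is $(f-\eta_j)\tau_i|_C$. The only real obstruction is thus nilpotence on $F_U(\tau_i)$. Since $U$ acts trivially on $F_U(\tau_i)$, one has $H_U^*(F_U(\tau_i))=H^*(BU)\otimes H^*(F_U(\tau_i))$, and Lemma \ref{lem:trivialaction'} reduces nilpotence there to the vanishing of the $H^*(BU)\otimes H^0$-component. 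Writing $\tau_i|_{F_U(\tau_i)} = a_i\otimes 1 + (\text{terms in } H^*(BU)\otimes H^+)$ and similarly $\eta_j|_{F_U(\tau_i)} = b_j\otimes 1 + (\ldots)$, Proposition \ref{prop:TSchar'} gives $a_i\neq 0$, and integrality of the polynomial ring $H^*(BU)$ turns the vanishing of the $H^0$-component of $(f-\eta_j)\tau_i$ into the identity $\bar f = b_j$ in $H^*(BU)$, where $\bar f$ denotes the image of $f$ under $R\to H^*(BU)$ and $b_j$ equals the image of $\eta_j$ under $H_T^*(X)\to H_U^*(p)$ for any $p\in F_U(\tau_i)$.

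For the equivalence with $F_U(\tau_i)\subset F_H(\eta_j)$, in one direction I pick any $p\in F_U(\tau_i)$ and take $f$ to be the image of $\eta_j$ under $H_T^*(X)\to H_T^*(p)=R$. This $f$ automatically fulfills $\bar f=b_j$, and under the hypothesis $p\in F_H(\eta_j)$ the Thom system property of $\eta_j$ gives that the image of $\eta_j$ in $H_H^*(p)=H^*(BH)$ is nonzero, so $f\notin\mathfrak{p}_H$ and hence $f\in S_H$. Conversely, given $f\in S_H$ with $\bar f = b_j$, the factorization $R\to H^*(BU)\to H^*(BH)$ shows that the image of $\eta_j$ in $H_H^*(p)$ coincides with the image of $f$, hence is nonzero. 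By the Thom system characterization of $\eta_j$ on $X^H$, this forces $p$ to lie in $F_H(\eta_j)$; since $F_U(\tau_i)\subset X^H$ is connected and $F_H(\eta_j)$ is a connected component of $X^H$ containing $p$, the containment $F_U(\tau_i)\subset F_H(\eta_j)$ follows. The step requiring the most care is to keep all restriction maps aligned so that the correct Thom system property (at the $U$-level for $\tau_i$, at the $H$-level for $\eta_j$) is invoked at each layer.
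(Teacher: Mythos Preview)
Your proof is correct and follows essentially the same approach as the paper's: both reduce the nilpotence condition in $S_U^{-1}(H_T^*(X)/\mathfrak{p}_U H_T^*(X))$ to a pointwise statement in $H_U^*(p_i)$ via Lemma \ref{lem:nilpotencelem} and Lemma \ref{lem:trivialaction'}, and then link this to the $H$-level Thom system property of $\eta_j$ through the factorization $R\to H^*(BU)\to H^*(BH)$. The only cosmetic difference is that the paper first isolates the general claim ``$r_i(x)=\bar f$ in $H^*(BU)$ iff $f\tau_i-x\tau_i$ is nilpotent after localization'' and then specializes to $x=\eta_j$, whereas you proceed directly, handling the components $C\neq F_U(\tau_i)$ via nilpotence of $\tau_i|_C$ and extracting the identity $\bar f=b_j$ from the tensor product decomposition on $F_U(\tau_i)$; the underlying argument is the same.
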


\begin{proof}
For $i=1,\ldots,k$, let $p_i\in F_U(\tau_i)$ and let $r_i\colon H_T^*(X)\rightarrow H_U^*(p_i)$ be the natural restriction map. We claim that for any $x\in H_T^*(X)$ and $f\in R$ we have $r_i(x)=f+\mathfrak{p}_U\in R/\mathfrak{p}_U\cong H_U^*(p_i)$ if and only if the image of $f\tau_i-x\tau_i$ in $S^{-1}_U(H_T^*(X)/\mathfrak{p}_U H_T^*(X))$ is nilpotent. To prove the claim, recall that by Lemma \ref{lem:nilpotencelem} the latter condition is equivalent to $f\tau_i-x\tau_i$ being nilpotent in $H_U^*(X^U)$. By Lemma \ref{lem:trivialaction'} this is again equivalent to $r_j(f\tau_i-x\tau_i)$ being $0$ for $j=1,\ldots,k$. Since $r_j(\tau_i)=0$ for $j\neq i$ this depends only on $r_i(f\tau_i-x\tau_i)$ being zero. But this is the case if and only if $r_i(f\cdot 1)=r_i(x)$, which proves the claim.

The inclusion $F_U(\tau_i)\subset F_H(\eta_j)$ holds if and only if $p_i\in F_H(\eta_j)$. This is the case if and only if the image of $\eta_j$ in $H_H^*(p_i)$ is not $0$. Since this restriction map factors through $H_U^*(p_i)$ the condition is equivalent to $r_i(\eta_j)\notin \ker(H^*(BU)\rightarrow H^*(BH))=\overline{\mathfrak{p}_H}$, where the latter denotes the image of $\mathfrak{p}_H$ in $H^*(BU)$. This is equivalent to $r_i(\eta_j)=r_i(f\cdot 1)$ for some $f\in S_H$, proving the theorem.
\end{proof}

{We will try to use the above results to reconstruct the connected orbit type stratification from the equivariant cohomology. However there are clearly limitations to this approach as illustrated by the following}

\begin{ex}\label{ex:spheres}
Consider a $T$-action on a sphere $S^n$ with nonempty, connected fixed point set $F$ {(e.g., Example \ref{ex:posets} with $(a,b)=(0,0)$}). Then the action is automatically equivariantly formal -- for even $n$ any action on $S^n$ is {as then cohomology is concentrated in even degrees and consequently the Serre spectral sequence of the Borel fibration collapses on the second page. For odd $n$ this follows from Lemma \ref{lem:equivformal} (iv) because then the total Betti number of $F$ is necessarily $2$: the total Betti number  of the fixed point set $X^T$ of a torus action on a space $X$ is bounded from above by the total Betti number of $X$ \cite[Corollary 3.1.14]{All}, hence $0<\dim H^{*}(F)\leq 2$. Moreover \cite{Kob}, there is an equality of Euler characteristics $\chi(X^T)=\chi(X)$, hence $\dim H^*(F)=2$. }

We choose an $R$-module basis of $H_T^*(S^n)$ of the form $\{1,a\}$, with $a\in H^n_T(S^n)$. By replacing $a$ by an element of the form $a+f$, with $f\in H^n(BT)$, we may assume that $a$ restricts to an element in $R\otimes H^+(F)$. As the restriction map $H_T^*(S^n) \to H_T^*(F) = R\otimes H^*(F)$ is injective, this implies that $a^2=0$ (as $H^+(F)$ is concentrated in only one degree).

We have shown that the equivariant cohomology $H_T^*(S^n)$ is, as an $R$-algebra, isomorphic to that of the trivial $T$-action on $S^n$. In particular, equivariant cohomology can not distinguish these actions. However, among those indistinguishable actions, many different orbit type stratifications are possible.
\end{ex}

Let $\chi$ be the connected orbit type stratification, see Definition \ref{defn:orbitstratification}. {To make precise the capabilities and limits of Theorems \ref{thm:detectthomsys} and \ref{thm:inclusiondetector}} let us introduce the following recursive

\begin{defn} \label{defn:ramified}
{
We begin by declaring an element $C\in \chi$ as \emph{ramified} if it is minimal in the poset $\chi$. Then in the next step, we declare an element $C\in \chi$ as ramified if there exist two distinct elements $D_1\neq D_2\in\chi$ which were previously declared as ramified and which satisfy
\begin{itemize}
\item $D_1,D_2\subset C$
\item $C$ is minimal with respect to the above property in the sense that for any $C'\in\chi$ with $D_1,D_2\subset C'$ we have $C\subset C'$.
\end{itemize}
This step is repeated until after finitely many iterations no new ramified elements arise.}
\end{defn}

{Note that the process in the above definition indeed terminates after finitely many steps because the length of chains in $\chi$ is bounded.}

\begin{defn} We define $\overline{\chi}$ to be the subposet of $\chi$ given by all ramified elements in $\chi$.
\end{defn}
{
\begin{ex}\label{ex:sphere-ramification}
For the orbit type stratifications depicted in Example \ref{ex:posets} we have $\chi=\overline{\chi}$ in cases $(a,b)=(0,1)$ and $(a,b)=(1,0)$. To see this note that the two distinct fixed points $(*,T)$ in the left hand column are ramified as they are minimal. Also each of the elements in the central column is minimal with respect to the property that it contains the two fixed points and is therefore also ramified. The (maximal) element in the right hand column is ramified because it is minimal with respect to the property that it contains any two (distinct and ramified) elements in the central column.
In the case $(a,b)=(0,0)$ only the unique minimal element in the left hand column is ramified. This is due to the fact that after declaring the minimal elements as ramified, there is only a single ramified element. Hence no new elements are declared as ramified when looking for elements which are minimal with respect to the property of containing two distinct ramified elements. More generally this argument shows that for any torus action with nonempty, connected fixed point set the poset $\overline{\chi}$ of ramified elements consists only of a single element. Thus while $\chi$ may vary among the indistinguishable actions described in Example \ref{ex:spheres}, the ramified poset $\overline{\chi}$ does not.
\end{ex}
}

\begin{thm}\label{thm:encodesstratification} The equivariant cohomology of a compact $T$-space $X$ encodes the subposet $\overline{\chi}$ of ramified elements in the poset $\chi$ of orbit type strata, together with the restriction {$\left.\lambda\right|_{\overline{\chi}}:\overline{\chi}\to \{\textrm{connected subgroups of }T\}$} of $\lambda$.
\end{thm}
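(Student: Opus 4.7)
My plan is to reconstruct $\overline{\chi}$ together with $\overline{\lambda}$ from $H_T^*(X)$ by passing through an auxiliary poset $\chi'$ that records not only the stratum but also the corresponding subtorus. Concretely, let
\[
\chi' := \{(U, C) \mid U \subset T \text{ a connected subtorus},\ C \text{ a connected component of } X^U\},
\]
equipped with the partial order $(U, C) \leq (V, D)$ iff $U\subset V$ and $D \subset C$, and with the labelling $\lambda'(U, C) := U$. By Theorem~\ref{thm:detectthomsys}, for each connected subtorus $U$ the components of $X^U$ correspond bijectively to the elements of any $U$-local Thom system in $H_T^*(X)$, which is a notion intrinsic to the $R$-algebra $H_T^*(X)$. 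Theorem~\ref{thm:inclusiondetector} then detects the containments $F_V(\eta) \subset F_U(\tau)$ needed to describe the partial order on $\chi'$. Hence the pair $(\chi', \lambda')$ is encoded in $H_T^*(X)$.

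Mirroring Definition~\ref{defn:ramified}, I call $(U, C) \in \chi'$ \emph{ramified} if it is maximal in $\chi'$, or if there exist two distinct ramified elements $(V_1, D_1), (V_2, D_2) \geq (U, C)$ in $\chi'$ such that $(U, C)$ is maximal in $\chi'$ with this property. This is a combinatorial property of $\chi'$ alone, so the subposet $\overline{\chi'} \subset \chi'$ of ramified elements together with the restriction of $\lambda'$ is also encoded in $H_T^*(X)$. The main task is to show that the forgetful map $\pi\colon \chi' \to \chi$, $(U, C) \mapsto C$, restricts to an order-reversing bijection $\overline{\chi'} \to \overline{\chi}$ under which $(\lambda(C), C)$ corresponds to $C$; together with $\lambda = \lambda' \circ \pi^{-1}$ on $\overline{\chi}$, this delivers the desired reconstruction of $(\overline{\chi}, \overline{\lambda})$ from $H_T^*(X)$.

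I establish the identification $\overline{\chi'} = \{(\lambda(C), C) \mid C \in \overline{\chi}\}$ by induction on the recursive definition of ramified. The base case asserts that $(U, C)$ is maximal in $\chi'$ iff $C$ is minimal in $\chi$ and $U = \lambda(C)$: any strict upper bound $(U', C') > (U, C)$ in $\chi'$ either enlarges $U$ while keeping $C' = C$ fixed (whose existence is equivalent to $U \subsetneq \lambda(C)$) or strictly shrinks $C$ to a smaller stratum (whose existence is equivalent to $C$ being non-minimal in $\chi$). For the inductive step, given $(U, C) \in \chi'$ maximal with $(\lambda(D_1), D_1), (\lambda(D_2), D_2) \geq (U, C)$ for distinct lower-stage ramified $D_1, D_2 \in \overline{\chi}$ supplied by the induction hypothesis, the same two checks apply: the pair $(\lambda(C), C)$ still lies below both $(\lambda(D_i), D_i)$ because $\lambda(C) \subset \lambda(D_i)$ (a consequence of $D_i \subset C$), which forces $U = \lambda(C)$; and any smaller stratum $C' \subsetneq C$ in $\chi$ with $D_1 \cup D_2 \subset C'$ would yield a strict upper bound $(\lambda(C'), C')$, forcing $C$ to be minimal in $\chi$ with $D_1, D_2 \subset C$. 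The converse direction is an analogous check, and finiteness of $\chi$ terminates the induction. The main obstacle I anticipate is bookkeeping the opposite ``size'' conventions: enlarging $U$ and shrinking $C$ both move upward in $\chi'$, whereas ramification in $\chi$ selects the smallest subset containing two ramified elements, so $\pi$ is necessarily order-reversing, and this must be tracked carefully throughout.
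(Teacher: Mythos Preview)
Your proposal is correct and follows essentially the same route as the paper: both construct the auxiliary labelled poset of pairs $(U,C)$ with $C$ a component of $X^U$, encode it algebraically via $U$-local Thom systems together with Theorem~\ref{thm:inclusiondetector}, transport the notion of ramification to this poset, and then identify the ramified elements there with $\{(\lambda(C),C):C\in\overline{\chi}\}$. The only differences are cosmetic---you use the opposite order convention on $\chi'$ (so your forgetful map is order-reversing rather than order-preserving) and you induct on the recursion depth of the definition of ramified rather than on the isotropy codimension $\codim\lambda'$ as the paper does.
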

\begin{proof}
We construct a poset $\overline{\chi}'$ and a map $\lambda'\colon \overline{\chi}'\to \{\textrm{connected subgroups of }T\}$ together with an isomorphism $\varphi\colon \overline{\chi}'\rightarrow \overline{\chi}$ of posets satisfying $\lambda'=\lambda\circ\varphi$.
We fix a $U$-local Thom system $\tau^U_1,\ldots,\tau^U_{k_U}\in H_T^*(X)$, for every subtorus $U\subset T$, and define $\chi'$ to be the set of tuples $(\tau_i^U,U)$.
We write $(\tau_j^U,U)\leq (\tau_i^H,H)$ whenever $H\subset U$ and $f\tau_j^U-\tau_i^H\tau_j^U$ is nilpotent in $S^{-1}_U(H_T^*(X)/\mathfrak{p}_U H_T^*(X))$ for some $f\in S_H$. Then by Theorem \ref{thm:inclusiondetector} this turns $\chi'$ into a partially ordered set which corresponds bijectively to the poset of pairs $(C,U)$, where $C$ is a component of $X^U$.
The map $(\tau_i^U,U)\mapsto F_U(\tau_i^U)$ corresponds to the forgetful map $(C,U)\mapsto C$ and gives a surjection $\varphi\colon\chi'\rightarrow \chi$ compatible with the poset structure.

In analogy with $\overline{\chi}$, we call an element $C\in \chi'$ ramified if it is either minimal in $\chi'$ or there exist two ramified elements $D_1\neq D_2$ in $\chi'$ with the property that $C$ is minimal among the elements containing $D_1$ and $D_2$. Furthermore we set $\lambda'(\tau_i^U,U)=U$. We claim that $\varphi$ restricts to a bijection $\overline{\chi}'\rightarrow\overline{\chi}$ between ramified subsets and that $\lambda'=\lambda\circ \varphi$ on $\overline{\chi}'$.
Note first that for any $C\in \chi'$ we have $\codim \lambda'(C)\geq \codim \lambda(\varphi(C))$ and that any element $C\in\overline{\chi}'$ has to satisfy $\lambda'(C)=\lambda (\varphi(C))$ due to the minimality condition. Also if $\varphi(C)=\varphi(D)$ for $C,D\in \overline{\chi}'$ then in particular $\lambda'(C)=\lambda'(D)$. But then the properties of Thom systems yield $C=D$ proving injectivity of $\varphi|_{\overline{\chi}'}$. It remains to prove that $\varphi(\overline{\chi}')=\overline{\chi}$.

To see this we use induction over the isotropy codimension, where the statement is obvious for the fixed points in isotropy codimension $0$. Let $\overline{\chi}'^k$ (resp.\ $\overline{\chi}^k$) be the subset consisting of those elements $x$ for which the codimension of $\lambda'(x)$ (resp.\ $\lambda(x)$) is $k$ or less. Suppose we have shown that $\varphi(\overline{\chi}'^k)=\overline{\chi}^k$. Let $C\in \overline{\chi}^{k+1}$ with $\lambda(C)$ of codimension $k+1$. Then there {exists} $C'\in \chi'$ with $\varphi( C')=C$ and $\codim \lambda'(C')=k+1$. We show that $C'$ is ramified and hence $\varphi(\overline{\chi}')\supset \overline{\chi}$. If $C$ is minimal and $D'\leq C'$ then $\varphi(D')=C$. Thus $\codim \lambda'(D')\geq\codim\lambda(C)=\codim \lambda'(C')$ which implies $D'=C'$. If $C$ is not minimal then there are $D_1,D_2\in \overline{\chi}^k$ such that $C$ is minimal among the elements containing $D_1$ and $D_2$. Let $D_1',D_2'$ denote preimages in $\overline{\chi}'^k$. Then we have $D_1',D_2'\leq C'$. For any $D_1',D_2'\leq B'\leq C'$ we have that $\varphi(B')=C$ so $\codim\,  \lambda' (B')\geq \codim\, \lambda (C)=\codim\, \lambda'(C')$ and thus $B'=C'$. This concludes the proof of $\varphi(\overline{\chi}')\supset \overline{\chi}$. In a similar fashion, the induction proves $\varphi(\overline{\chi}')\subset \overline{\chi}$.
\end{proof}

{Every element $D\in \chi$ contains a unique maximal ramified element $C$ (i.e.\ any ramified $C'\subset D$ satisfies $C'\subset C$): first, $D$ contains a minimal element. In particular it contains a maximal ramified element $C\subset D$. If there are two distinct ramified $C,C'\subset D$ which are maximal with respect to these properties then $D$ would by definition be ramified itself, resulting in a contradiction. Thus $C$ is unique. Sending $D\mapsto C$ defines a retraction (i.e., a left-inverse)
\[
r:\chi\longrightarrow \overline{\chi}
\]
of the inclusion $\overline{\chi}\to \chi$. The retraction respects the poset structures but is not necessarily compatible with $\lambda$.
}

{
As we can reconstruct $(\overline{\chi},\lambda|_{\overline{\chi}})$ from $H_T^*(X)$ up to isomorphism of (labelled) posets, it is natural to ask how close $\overline{\chi}$ is to $\chi$. One reasonable way to quantify this is to investigate the inclusion $r(D)\subset D$. For instance if it is the identity, then $\overline{\chi}=\chi$ and we can reconstruct $\chi$ entirely from $H_T^*(X)$. A weaker notion of equivalence is given by the following
}

\begin{defn}
A map between two spaces is called a rational equivalence if it induces an isomorphism {between their rational cohomology algebras}.
\end{defn}

\begin{rem}
The existence of a rational equivalence $X\rightarrow Y$ between two spaces is much stronger than the condition $H^*(X)\cong H^*(Y)$. Under appropriate conditions on the fundamental groups it implies that $X$ and $Y$ have the same rational homotopy type and in particular isomorphic homotopy groups up to torsion.
\end{rem}

{By the isotropy codimension of an element $C\in \chi$ we mean the codimension of the subgroup $\lambda(C)\subset T$.}

\begin{cor}\label{cor:stratifequivformal}
Let $X$ be an equivariantly formal, compact $T$-space such that every isotropy codimension $1$ element of $\chi$ is ramified. Then $H_T^*(X)$ encodes $\chi$ up to rational equivalence in the sense that {it encodes $\overline{\chi}$ and that} for any $D\in \chi$ the inclusion {$r(D)\subset D$} of the unique maximal ramified element in $D$ is a rational equivalence. If $X$ is additionally a manifold and the $T$-action is smooth, then {$\chi=\overline{\chi}$ and} all of $\chi$ is encoded in $H_T^*(X)$.
\end{cor}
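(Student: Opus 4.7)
My plan is to reduce both claims to a Chang-Skjelbred-type analysis, after first establishing uniqueness of the maximal ramified element in every $D\in\chi$. For uniqueness, I would argue by contradiction: if $D$ contained two distinct maximal ramified sub-elements $M_1\neq M_2$, the finite nonempty subset $\{E\in\chi : M_1, M_2\subseteq E\}$ of $\chi$ (which contains $D$) has a minimal element $E$; by Definition \ref{defn:ramified} this $E$ is ramified, satisfies $E\subseteq D$ and $E\supsetneq M_1$, contradicting maximality of $M_1$ among ramified sub-elements of $D$.

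The key geometric input from the hypothesis is that the $1$-skeleton $D_1$ of the $T$-action on $D$ is contained in $C$. Indeed, fixed point components in $D$ are minimal hence ramified, so $D^T\subset C$; and for every codimension $1$ subtorus $H\supset\lambda(D)$, each component of $D^H$ is either a fixed point component or has isotropy exactly $H$, in which case it is an isotropy codimension $1$ element and hence ramified by assumption---either way lying in $C$. Consequently $D^T=C^T$ and $D_1=C_1$. Both $D$ and $C$ are equivariantly formal by Proposition \ref{prop:eqforminherited}, so the Chang-Skjelbred Lemma \ref{lem:CSL} characterizes both $H_T^*(D)$ and $H_T^*(C)$ as the same kernel of $H_T^*(D^T)\to H_T^*(D_1,D^T)$. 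Since the inclusion-induced restriction $H_T^*(D)\to H_T^*(C)$ is compatible with the injections into $H_T^*(D^T)$, it must be an isomorphism; quotienting by $R^+$ and using equivariant formality then yields an inclusion-induced isomorphism $H^*(D)\cong H^*(C)$, establishing the rational equivalence.

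For the smooth case, I plan to show that the hypothesis forces $\chi=\overline{\chi}$; combined with Theorem \ref{thm:encodesstratification} this encodes all of $\chi$ in $H_T^*(X)$. Assume for contradiction that $D\in\chi$ is non-ramified, with maximal ramified element $C\subsetneq D$. Since $\lambda(C)\supsetneq\lambda(D)$, the normal bundle of the closed smooth submanifold $C\subset D$ is a $T$-equivariant complex vector bundle of positive rank (its weights being the non-trivial characters of $\lambda(C)/\lambda(D)$), so $\dim C<\dim D$. This contradicts the first claim, since the inclusion of a proper closed smooth submanifold of strictly smaller dimension into a compact connected smooth manifold cannot be a rational equivalence: the top nonzero rational cohomology of $D$ sits in a strictly higher degree than that of $C$. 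The main obstacle I anticipate is legitimizing this dimension argument without an a priori orientability assumption on $D$; however, the special equivariant-cohomological structure in our setting (in particular the oriented normal bundles arising from the torus action and the equivariant Poincaré duality they afford) should be enough to push the top-cohomology comparison through and complete the contradiction.
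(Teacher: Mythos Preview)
Your argument for the first claim---uniqueness of the maximal ramified $C\leq D$, then $D^T=C^T$ and $D_1=C_1$ from the hypothesis, then Chang--Skjelbred---is the paper's proof. (One small slip: a minimal $E$ of $\{E\in\chi:M_1,M_2\subseteq E\}$ need not lie below $D$; take instead $E$ minimal in $\{E\in\chi:M_1,M_2\subseteq E\subseteq D\}$, which is then automatically globally minimal, hence ramified, and now visibly $\leq D$.)

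For the smooth case your approach diverges from the paper's and carries a real gap. The assertion that a closed submanifold $C\subsetneq D$ with $\dim C<\dim D$ cannot include as a rational equivalence is false without orientability: $\RR\PP^2\hookrightarrow\RR\PP^4$ is a rational equivalence, both sides having $H^*(-;\mathbb{Q})=\mathbb{Q}$ concentrated in degree $0$. This very pair arises as $C\subsetneq D$ in the connected orbit type stratification of the $S^1$-action on $\RR\PP^4$ rotating two homogeneous coordinates---an action which of course violates the corollary's hypothesis, but the point is that your dimension step never re-invokes that hypothesis and so cannot exclude such behaviour. Orientability of the normal bundle says nothing about the top rational cohomology of a possibly non-orientable $D$, so the proposed equivariant-Poincar\'e-duality patch does not go through as stated.

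The paper instead uses the equality $C_1=D_1$ a second time: at any $p\in D^T=C^T$ the tangent representation $T_pD$ splits as $T_p(D^T)$ plus two-dimensional weight spaces, each belonging to exactly one isotropy-codimension-$1$ stratum through $p$, so $\dim D$ is read off from the local structure of $D_1$ near $p$. Since $C_1=D_1$, this yields $\dim C=\dim D$, and $C$ being a closed submanifold of the connected manifold $D$ of the same dimension forces $C=D$.
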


\begin{proof}
{By Theorem \ref{thm:encodesstratification} $H_T^*(X)$ encodes $(\overline{\chi},\lambda|_{\overline{\chi}})$. Thus it remains to show that for any $D\in\chi$ the inclusion $r(D)\subset D$ is a rational equivalence (resp.\ the identity in case $X$ is a manifold).
}

Since every isotropy codimension $1$ element is ramified, for every $D\in \chi$, the elements {$r(D)$} and $D$ have the same one-skeleton. By Proposition \ref{prop:eqforminherited}, {$r(D)$} and $D$ are both equivariantly formal so the Chang-Skjelbred Lemma \ref{lem:CSL} implies that the inclusion is a rational equivalence.

If $X$ is a manifold, we observe that for {any} $N\in \chi$ we have $N^T\neq\emptyset$ and that for {any} $p\in N^T$ the isotropy $T$-representation of $X$ at $p$ decomposes into $2$-dimensional subrepresentations of orbit dimension $1$ and the tangent space of $X^T$. We deduce that the dimension of $N$ is determined by the one-skeleton of $N$. In particular in the above setting ${r(D)}$ and $D$ are submanifolds of the same dimension so ${r(D)}=D$.
\end{proof}

{
\begin{rem}
We will argue in Lemma \ref{lem:ramified} that the requirements of Corollary \ref{cor:stratifequivformal} are fulfilled if $X$ is an equivariantly formal, compact, orientable $T$-manifold such that for the path components $X_1,\ldots,X_k$ of $X^T$ the map $H^*(X)\rightarrow H^*(X_i)$ is surjective for $1\leq i\leq k$. In particular this includes the case when $X$ is a quasitoric manifold as then the $X_i$ are just points. In combination with Remark \ref{rem:faceposet} this proves that the characteristic pair of the quasitoric manifold $X$ is encoded in $H_T^*(X)$. It then follows from \cite[Proposition 1.8]{DJ} that two quasitoric manifolds with isomorphic equivariant cohomology algebras are equivariantly homeomorphic. This was first proved in \cite[Theorem 4.1]{M}. In order to pass from algebra to combinatorics the proof of \cite[Theorem 4.1]{M} focuses on the Thom classes of codimension $2$ submanifolds which is in some sense dual to our approach of characterizing Thom classes of fixed point sets.
\end{rem}
}

\section{Cohomology}\label{sec:cohom}

In this section we discuss under which conditions equivariant cohomology contains cohomological information about the elements in the connected orbit type stratification. We continue to use the notation from the last section, i.e., for a subtorus $U\subset T$, $\mathfrak{p}_U=\ker(H^*(BT)\rightarrow H^*(BU))$ and $S_U = R\backslash \mathfrak{p}_U$.

\begin{defn}
We call a Thom system $\tau_1,\ldots,\tau_k$ \emph{strict}, if $\tau_i\tau_j=0$ for $i\neq j$. For {a} subtorus $U\subset T$, a collection $\tau_1,\ldots,\tau_k\in H_T^*(X)$ is called a \emph{strict} $U$-local Thom system of $H_T^*(X)$ if the $\tau_i$ restrict to a strict Thom system of $H_U^*(X^U)$.
\end{defn}

{Any Thom system in a commutative ring can be turned into a strict Thom system by raising all of its elements to sufficiently large powers.} However, not all statements on Thom systems transfer directly to their strict counterparts without imposing any additional conditions. The following lemma records some analogous properties.

\begin{lem}\label{lem:strictthomsys}
Let $X$ be a compact $T$-space, $U\subset T$ a subtorus, and $\tau_1,\ldots,\tau_k\in H_T^*(X)$.
\begin{enumerate}[(i)]
\item The collection $\tau_1,\ldots,\tau_k$ forms a strict $U$-local Thom system if and only if every $\tau_i$ restricts to $0\in H_U^*(X_j)$ for all components $X_j\subset X^U$ except for a single $X_i$ where it is not nilpotent.
\item The $\tau_i$ form a strict $T$-local Thom system if and only if they induce a strict Thom system of $S_T^{-1} H_T^*(X)$. If the action is equivariantly formal then this is the case if and only if the $\tau_i$ are a strict Thom system of $H_T^*(X)$.
\item Assume that the action is equivariantly formal. Then the $\tau_i$ are a strict $U$-local Thom system if and only if they induce a strict Thom system of $S^{-1}_U(H_T^*(X)/\mathfrak{p}_U H_T^*(X))$.
\end{enumerate}
\end{lem}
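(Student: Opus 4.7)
All three parts follow the same template: Theorem \ref{thm:detectthomsys} already handles the non-strict Thom system axioms (non-nilpotence and maximal cardinality), so in each case I only need to verify that the strictness condition $\tau_i\tau_j=0$ transfers correctly between the rings under consideration.

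For (i), since $U$ acts trivially on $X^U$, one has $H_U^*(X^U)=\bigoplus_j H^*(BU)\otimes H^*(X_j)$ and $H^*(BU)$ is an integral domain. The backward direction is immediate: disjoint support of the restrictions gives both strictness and non-nilpotence, and the count $k$ (the number of components of $X^U$) is maximal by Proposition \ref{prop:TSchar'}. The forward direction is the main technical step. By Proposition \ref{prop:TSchar'}, each $\tau_i|_{X^U}$ restricts to zero at a chosen point $p_j\in X_j$ for $j\neq i$ and non-trivially to $p_i$. To upgrade the pointwise vanishing on $X_j$ to vanishing of the whole restriction $a_i^j:=\tau_i|_{X_j}$ for $i\neq j$, I write $a_i^j=\sum_{m\geq 1}b_m$ with $b_m\in H^*(BU)\otimes H^m(X_j)$ (the degree-zero piece vanishes because it does so pointwise and $X_j$ is connected) and $a_j^j=f_j\otimes 1+c$ with $0\neq f_j\in H^*(BU)$ and $c$ in positive cohomological degree. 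Reading off strictness $a_j^j\cdot a_i^j=0$ cohomological degree by degree gives, inductively, $f_j\cdot b_n=0$ in $H^*(BU)\otimes H^n(X_j)$, whence $b_n=0$ since $H^*(BU)$ is a domain.

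For (ii), Borel localization yields $S_T^{-1}H_T^*(X)\cong S_T^{-1}H_T^*(X^T)$, under which the $\tau_i$ correspond to their restrictions $\tau_i|_{X^T}$. Since $H_T^*(X^T)=R\otimes H^*(X^T)$ is free over $R$, it embeds into its $S_T$-localization, so strictness of the restrictions in $H_T^*(X^T)$ is equivalent to strictness of their images in $S_T^{-1}H_T^*(X)$. If the action is equivariantly formal then $H_T^*(X)$ itself is free over $R$ and hence embeds into $S_T^{-1}H_T^*(X)$ by the same argument, allowing me to drop the $S_T$-localization on the other side as well.

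Part (iii) runs analogously, applied to the composition $H_T^*(X)/\mathfrak{p}_UH_T^*(X)\to H_U^*(X)\to H_U^*(X^U)$, which I claim becomes an isomorphism after $S_U$-localization under equivariant formality. The second map becomes an isomorphism after $S_U$-localization by Borel localization (the $U$-action is equivariantly formal by Proposition \ref{prop:eqforminherited}). For the first map, I pick an $R$-basis of $H_T^*(X)$ lifting a basis of $H^*(X)$ (using equivariant formality), reduce modulo $\mathfrak{p}_U$ to get an $H^*(BU)$-basis of $H_T^*(X)/\mathfrak{p}_UH_T^*(X)$, and note that its images in $H_U^*(X)$ form an $H^*(BU)$-basis by a graded Nakayama argument applied to the free $H^*(BU)$-module $H_U^*(X)$. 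Strictness then transfers in both directions since $H_U^*(X^U)$ is free over $H^*(BU)$ and hence embeds into its $S_U$-localization. The main obstacle is the forward direction of (i), where the bigraded induction together with the integral-domain property of $H^*(BU)$ is what promotes pointwise vanishing to component-wise vanishing; in (iii) the crucial role of equivariant formality is in forcing $H_T^*(X)/\mathfrak{p}_UH_T^*(X)\cong H_U^*(X)$, which can fail in general due to the phenomena discussed in Remark \ref{rem:sphericalstuff}.
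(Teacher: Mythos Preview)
Your proof is correct and follows essentially the same approach as the paper for all three parts. For (i), the paper shortcuts your bigraded induction by directly invoking Lemma~\ref{lem:trivialaction'}(iii): since $\tau_j|_{X_j}$ is not nilpotent, multiplication by it is injective on $H_U^*(X_j)$, so $\tau_i\tau_j=0$ restricted to $X_j$ immediately forces $\tau_i|_{X_j}=0$.
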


\begin{proof}

For the first statement, recall from Lemma \ref{lem:trivialaction'} that multiplication with $\tau_j$ is injective on $H_U^*(X_j)$. Thus $\tau_i\tau_j=0$ implies that $\tau_i$ restricts to $0$ on $H^*_U(X_j)$.

The first half of $(ii)$ follows from the localization theorem and the fact that $H_T^*(X^T)\rightarrow S_T^{-1}(X^T)$ is injective. The second half is due to the fact that $H_T^*(X)\rightarrow S_T^{-1}H_T^*(X)$ is injective in the equivariantly formal case.

Regarding statement $(iii)$, if $X$ is equivariantly formal, then we claim that $H_U^*(X)\cong H_T^*(X)/\mathfrak{p}_U H_T^*(X)$ as $H^*(BU)\cong R/\mathfrak{p}_U$-algebras. To see this, recall that any set of elements $x_i\in H_T^*(X)$ which restricts to a $\mathbb{Q}$-basis of $H^*(X)$ gives an $R$-basis of $H_T^*(X)$. Since the restricted $U$-action is again equivariantly formal by Proposition \ref{prop:eqforminherited}, the restriction of the $x_i$ to $H_U^*(X)$ is an $H^*(BU)$-basis. Consequently the restriction $H_T^*(X)\rightarrow H_U^*(X)$ is surjective with kernel $\mathfrak{p}_U H_T^*(X) $, which proves the claim.
{Now if we localize the isomorphism $H_U^*(X)\cong H_T^*(X)/\mathfrak{p}_U H_T^*(X)$, we obtain isomorphisms
\[S_U^{-1}(H_T^*(X)/\mathfrak{p}_U H_T^*(X)) \cong S_U^{-1}H^*_U(X)\cong S_U^{-1}H_U^*(X^U)\]
where the second isomorphism follows from Borel localization in the following fashion: applied to the restricted $U$-action Borel localization gives an isomorphism $S^{-1}H_U^*(X)\rightarrow S^{-1}H_U^*(X^U)$ if we set $S= H^*(BU)\backslash\{0\}$. But the $R$-module structure on $H_U^*(X)$ is understood via the map $R=H^*(BT)\rightarrow H^*(BU)$ and this, by definition of $S_U$, maps $S_U$ onto $S$. Thus we obtain the second isomorphism of localized $R$-modules.
} 

The $\tau_i$ are a $U$-local Thom system if and only if they restrict to a Thom system of $S_U^{-1}H_U^*(X^U)$ thus $(iii)$ follows.
\end{proof}

It follows from the above lemma that we can algebraically detect strict $U$-local Thom systems if the action is equivariantly formal or $U=T$. In this case the equivariant cohomology algebra encodes the total Betti numbers in the following way:

\begin{prop}\label{prop:fpdimension}
Let $X$ be a compact $T$-space, $U\subset T$ a subtorus, and $\tau_1,\ldots,\tau_k\in H_T^*(X)$ a strict $U$-local Thom system corresponding to the components $F_U(\tau_1),\ldots,F_U(\tau_k)$ of $X^U$. Assume further that either $X$ is additionally equivariantly formal or that $U=T$. Then \[\dim H^*(F_U(\tau_i))=\rk_{S_U^{-1}R/\mathfrak{p}_U} I_i\] where $I_i=\{ x\in S_U^{-1}(H_T^*(X)/\mathfrak{p}_U H_T^*(X))\mid x\tau_j=0, \text{ for all } j\neq i\}$.
\end{prop}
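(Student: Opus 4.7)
The plan is to compute $\rk_{S_U^{-1}R/\mathfrak{p}_U} I_i$ by decomposing the ring $A := S_U^{-1}(H_T^*(X)/\mathfrak{p}_U H_T^*(X))$ into pieces indexed by the components of $X^U$ and identifying $I_i$ with the $i$-th such piece.

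First I will identify $A$ with $S_U^{-1} H_U^*(X^U)$. In the equivariantly formal case this is contained in the proof of Lemma \ref{lem:strictthomsys}(iii), where the isomorphism $H_T^*(X)/\mathfrak{p}_U H_T^*(X) \cong H_U^*(X)$ is first established and Borel localization is then invoked. If instead $U=T$, then $\mathfrak{p}_T=0$ so $A = S_T^{-1} H_T^*(X)$, which Borel localization identifies with $S_T^{-1} H_T^*(X^T)$. In either case, splitting $X^U$ as the disjoint union of the components $F_U(\tau_j)$ yields
\[
A \;\cong\; \bigoplus_{j=1}^k A_j, \qquad A_j := S_U^{-1} H_U^*(F_U(\tau_j)).
\]
Since $U$ acts trivially on each $F_U(\tau_j)$ we have $H_U^*(F_U(\tau_j)) = H^*(BU)\otimes H^*(F_U(\tau_j))$, and since $S_U$ maps onto $H^*(BU)\setminus\{0\}$, the localization gives $A_j \cong K\otimes H^*(F_U(\tau_j))$, where $K := S_U^{-1} R/\mathfrak{p}_U$ is the field of fractions of $H^*(BU)$. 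In particular $\rk_K A_j = \dim_{\mathbb{Q}} H^*(F_U(\tau_j))$.

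Next I analyze how $\tau_j$ sits in this decomposition. By strictness and Lemma \ref{lem:strictthomsys}(i), $\tau_j$ maps to $0$ in $A_i$ for $i\neq j$ and to an element $\tilde\tau_j \in A_j$ coming from a non-nilpotent class in $H_U^*(F_U(\tau_j))$. By Lemma \ref{lem:trivialaction'}, multiplication by this class is injective on $H_U^*(F_U(\tau_j))$, and injectivity is preserved under localization, so multiplication by $\tilde\tau_j$ is injective on $A_j$. Hence for $x=(x_1,\ldots,x_k)\in\bigoplus_j A_j$ we have $x\tau_j = 0$ if and only if $x_j\tilde\tau_j = 0$ if and only if $x_j=0$. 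Consequently $I_i = \{x \mid x_j=0 \text{ for all } j\neq i\} \cong A_i$, and
\[
\rk_K I_i \;=\; \rk_K A_i \;=\; \dim_{\mathbb{Q}} H^*(F_U(\tau_i)),
\]
which is the claim. The main subtlety is the initial identification of $A$ with $S_U^{-1} H_U^*(X^U)$, which is the reason the additional hypothesis of equivariant formality (or $U=T$) is needed; once that isomorphism is in place, the splitting argument and the injectivity of multiplication by $\tilde\tau_j$ make the computation entirely mechanical.
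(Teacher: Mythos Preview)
Your proof is correct and follows essentially the same route as the paper: both identify $S_U^{-1}(H_T^*(X)/\mathfrak{p}_U H_T^*(X))$ with $S_U^{-1}H_U^*(X^U)$ (using equivariant formality or $U=T$), decompose the latter over the components of $X^U$, and conclude that $I_i$ is exactly the $i$-th summand. The paper phrases the last step as ``$I_i$ is the kernel of the restriction to $X^U\setminus F_U(\tau_i)$'' without further comment, whereas you spell out the underlying reason---that multiplication by $\tilde\tau_j$ is injective on $A_j$ via Lemma~\ref{lem:trivialaction'}---which makes your argument slightly more explicit but not materially different.
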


\begin{proof}
As argued in the proof of Lemma \ref{lem:strictthomsys}, we have
\[S_U^{-1}(H_T^*(X)/\mathfrak{p}_U H_T^*(X)) \cong S_U^{-1}H_U^*(X^U)\]
if $U=T$ or if the action is equivariantly formal. 
The ideal $I_i$ corresponds to the kernel of the restriction $S_U^{-1}H_U^*(X^U)\rightarrow S_U^{-1}H_U^*(X^U-F_U(\tau_i))$. Since $S_U^{-1}H_U^*(X^U)=S_U^{-1}H_U^*(X^U-F_U(\tau_i))\oplus S_U^{-1}H_U^*(F_U(\tau_i))$. It follows that $I_i$ is isomorphic to $S_U^{-1}H_U^*(F_U(\tau_i))$ which, as an $S_U^{-1}R/\mathfrak{p}_U$-module, is isomorphic to $S_U^{-1}R/\mathfrak{p}_U\otimes H^*(F_U(\tau_i))$.
\end{proof}

The second statement of the following corollary was first proven in \cite[Theorem 5.1]{FY}. Recall that an equivariantly formal compact orientable $T$-manifold is of GKM type \cite{GKM} if it has only finitely many fixed points, and the one-skeleton $M_1$ of the action is a finite union of $T$-invariant $2$-spheres.
\begin{cor}
For an equivariantly formal compact orientable $T$-manifold $M$ the equivariant cohomology algebra $H_T^*(M)$ encodes if the action is of GKM type or not. In case the action is GKM, $H_T^*(M)$ also encodes the GKM graph of the action.
\end{cor}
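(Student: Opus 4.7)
The first half of the corollary is obtained by upgrading Proposition \ref{prop:fpdimension} to a graded statement. The ideal $I_i$ appearing there is a graded submodule of $S_U^{-1}(H_T^*(M)/\mathfrak{p}_U H_T^*(M))$, and all identifications in its proof, culminating in $I_i\cong S_U^{-1}R/\mathfrak{p}_U\otimes H^*(F_U(\tau_i))$, respect the grading. For every subtorus $U\subset T$ (using a strict $U$-local Thom system, which exists by Lemma \ref{lem:strictthomsys} thanks to equivariant formality), the full Poincar\'e polynomial of each component $F$ of $M^U$ is therefore algebraically encoded in $H_T^*(M)$. Because $M$ is orientable and closed, $F$ is itself a closed orientable submanifold, so $\dim F$ equals the top degree in which $H^*(F)$ is nonzero and is likewise algebraically determined.

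To detect the GKM property, I apply this observation in two stages. First, with $U=T$, the fixed point set $M^T$ is discrete iff every component of $M^T$ has Poincar\'e polynomial $1$. Granting this, enumerate the codim-$1$ subtori $U\subset T$, which correspond to the codim-$1$ homogeneous primes of $R=H^*(BT)$ generated by a primitive rational linear form, and for each such $U$ test whether every component $C$ of $M^U$ satisfies $P(C)=1$ (a point) or $P(C)=1+s^2$. The latter forces $\dim C=2$, and the classification of closed orientable surfaces then forces $C\cong S^2$. The action is GKM precisely when all of these algebraic conditions hold.

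If the action is GKM, then every codim-$1$ stratum $C$ is a $2$-sphere with exactly two fixed points and its only proper substrata are those two fixed points; hence $C$ is minimal among strata containing two ramified elements and is therefore itself ramified. Corollary \ref{cor:stratifequivformal} now applies and reconstructs all of $\chi$ together with $\lambda$ from $H_T^*(M)$. The GKM graph is then assembled directly: the vertex set is the set of minimal elements of $\chi$; the edges are the codim-$1$ elements of $\chi$, each joining the two minimal elements of $\chi$ lying below it in the poset; the label on an edge $C$ is the primitive character of $T$, well defined up to sign, whose kernel equals the codim-$1$ subtorus $\lambda(C)$.

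The only nontrivial technical point is the graded upgrade of Proposition \ref{prop:fpdimension}; once this is in hand, the corollary follows by combining it with Theorem \ref{thm:encodesstratification}, Corollary \ref{cor:stratifequivformal}, and the classification of closed surfaces.
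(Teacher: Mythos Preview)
Your argument is correct and follows the same overall strategy as the paper: use Proposition~\ref{prop:fpdimension} to detect whether $M^T$ is discrete, observe that in this case every isotropy codimension~$1$ stratum is ramified, and then invoke Theorem~\ref{thm:encodesstratification} (or Corollary~\ref{cor:stratifequivformal}) to recover the GKM graph from $\overline{\chi}$ and $\overline{\lambda}$.

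The difference lies in how you handle the second half of the GKM condition, namely that the one-skeleton consists of $2$-spheres rather than higher-dimensional pieces. The paper's proof is terse here: it only checks discreteness of $M^T$ and then asserts that ``the equivariant structure of the one-skeleton $M_1$ is encoded,'' implicitly leaving to the reader that one can recover, say, $\dim M$ from $H^*_T(M)/R^+H^*_T(M)\cong H^*(M)$ and compare it with the number of edges at each vertex to decide whether all weights at every fixed point are pairwise independent. Your route---upgrading Proposition~\ref{prop:fpdimension} to a graded statement so as to extract the full Poincar\'e polynomial of each component of $M^U$, and then using orientability plus the classification of closed surfaces to pin down the $2$-spheres---makes this step completely explicit and self-contained. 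The graded refinement is legitimate: in the equivariantly formal case the chain of identifications $S_U^{-1}(H^*_T(M)/\mathfrak{p}_UH^*_T(M))\cong S_U^{-1}H^*_U(M^U)$ and the description of $I_i$ as the summand $S_U^{-1}H^*_U(F_U(\tau_i))$ are all degree-preserving, and $S_U^{-1}(R/\mathfrak{p}_U)$ is a graded field, so the graded rank of $I_i$ recovers the Poincar\'e polynomial. One small point: your phrase ``enumerate the codim-$1$ subtori'' should be read as a condition to be verified for all such $U$; only finitely many are relevant since $M^U=M^T$ for all but finitely many, and those finitely many are visible once $\overline{\chi}$ and $\overline{\lambda}$ are known.
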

\begin{proof}
By Proposition \ref{prop:fpdimension}, in the situation at hand the equivariant cohomology algebra encodes if the fixed point set consists of isolated points. If all fixed point components are isolated points, then every isotropy codimension $1$ element of the orbit type stratification $\chi$ is ramified, and Theorem \ref{thm:encodesstratification} implies that the equivariant structure of the one-skeleton $M_1$ of the action is encoded in $H_T^*(M)$.
\end{proof}

\begin{ex}\label{ex:cohomnotencoded}
In general, we can not expect $H_T^*(X)$ to encode more specific information about the {(equivariant)} cohomology {of elements of $\chi$}, even if the action is equivariantly formal: consider the $S^1$-action on $S^4\subset \mathbb{C}\oplus \mathbb{C}\oplus\mathbb{R}$ given by $s\cdot (v,w,h)=(sv,w,h)$. As argued in Example \ref{ex:spheres}, the equivariant cohomology of this action agrees with the one of the trivial action on $S^4$. However the fixed point sets of the two actions are $S^2$ and $S^4$, which have different cohomologies.

With regards to our previous results it also seems reasonable to ask for an example where additionally the entirety of $\chi$ is ramified. Note that this is fulfilled for any $S^1$-action with at least $2$ fixed point components. We obtain such examples from the previous ones by considering the diagonal action on $S^4\times S^2$ with standard rotation on the right hand side. For these actions we obtain the cohomologically distinct fixed point sets $S^2\coprod S^2$ and $S^4\coprod S^4$.
\end{ex}

The previous examples show that additional requirements are needed to reconstruct the cohomologies in the orbit type stratification from the global equivariant cohomology. Our main result with regards to cohomology is

\begin{thm}\label{thm:encodescohomology}
Let $M$ be an equivariantly formal, compact orientable $T$-manifold such that the map $H^*(M)\rightarrow H^*(X)$ is surjective for all components $X$ of $M^T$. Then the equivariant cohomology $H_T^*(M)$ encodes the connected orbit type stratification $\chi$ of $M$ as well as for any $C, D\in \chi$ with $C\subset D$ the respective equivariant cohomology algebras and the map $H_T^*(D)\rightarrow H_T^*(C)$ induced by the inclusion.
\end{thm}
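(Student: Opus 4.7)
The plan is to first recover the labelled poset $\chi$ by applying Corollary~\ref{cor:stratifequivformal}, and then to recover each $H_T^*(C)$ and the restriction maps $H_T^*(D)\to H_T^*(C)$ for $C\subset D$ by induction on $\dim C$. The base of the induction handles $T$-fixed components via a Thom-system annihilator description, while the inductive step uses the Chang--Skjelbred embedding for $C$ together with the inductively constructed $H_T^*(N)$ for smaller strata $N\subsetneq C$.

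To invoke Corollary~\ref{cor:stratifequivformal} I first show that every isotropy codimension-one stratum in $\chi$ is ramified. Suppose for contradiction that $C\in\chi$ is codimension one with $C^T$ consisting of a single component $X$. By Proposition~\ref{prop:eqforminherited} the $T$-action on $C$ is equivariantly formal, so $\dim H^*(C)=\dim H^*(X)$; since the hypothesis factors as $H^*(M)\twoheadrightarrow H^*(C)\to H^*(X)$, the restriction $H^*(C)\to H^*(X)$ is a surjection between equidimensional graded vector spaces, hence an isomorphism, and $H^*(C,X)=0$. But $C$ is a compact oriented manifold (fixed sets of torus actions on orientable manifolds inherit orientability) and $X\subsetneq C$ is the fixed set of a nontrivial circle action, so $\dim X<\dim C$. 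The long exact sequence of the pair then gives a surjection $H^{\dim C}(C,X)\twoheadrightarrow H^{\dim C}(C)\cong\mathbb{Q}$, contradicting $H^*(C,X)=0$. Hence every codimension-one stratum is ramified, and since $M$ is a smooth manifold Corollary~\ref{cor:stratifequivformal} recovers all of $\chi$ from $H_T^*(M)$.

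For the equivariant cohomology, the base case of the induction treats a fixed component $X$. Since $H^*(M)\to H^*(X)$ is surjective and $H_T^*(M)$, $H_T^*(X)$ are free $R$-modules (Lemma~\ref{lem:equivformal}), a graded Nakayama argument lifts the surjectivity to $H_T^*(M)\twoheadrightarrow H_T^*(X)$. Picking a strict $T$-local Thom system element $\tau_X\in H_T^*(M)$ associated to $X$ (algebraically detectable by Theorem~\ref{thm:detectthomsys} and Lemma~\ref{lem:strictthomsys}), one verifies $\mathrm{Ann}(\tau_X)=\ker(H_T^*(M)\to H_T^*(X))$: the equivariant formality inclusion $H_T^*(M)\hookrightarrow H_T^*(M^T)$ reduces both directions to componentwise statements, where they follow from $\tau_X|_Y=0$ for $Y\neq X$ and $\tau_X|_X$ being non-nilpotent, hence a non-zero divisor, in $H_T^*(X)=R\otimes H^*(X)$. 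Thus $H_T^*(X)\cong H_T^*(M)/\mathrm{Ann}(\tau_X)$ as $R$-algebras.

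For the inductive step, if $C\neq C^T$ then Lemma~\ref{lem:CSL} applied to the equivariantly formal action on $C$ embeds $H_T^*(C)\hookrightarrow \bigoplus_{X\subset C^T}H_T^*(X)$ as the Chang--Skjelbred subalgebra cut out by compatibility relations indexed by pairs $(K,N)$, where $K\supset\lambda(C)$ is codimension one in $T$ and $N$ is a connected component of $C^K$ with $\dim N>\dim N^T$. Each such $N$ is itself a stratum in $\chi$ with $\dim N<\dim C$, so by the inductive hypothesis its $H_T^*(N)$ and embedding into $\bigoplus_{Y\subset N^T}H_T^*(Y)$ are already identified algebraically. The compatibility at $(K,N)$ demands that the sub-tuple $(\alpha_Y)_{Y\subset N^T}$ of a given $(\alpha_X)_{X\subset C^T}$ lie in this known image; intersecting over all $(K,N)$ cuts out the image of $H_T^*(C)$ inside $\bigoplus_X H_T^*(X)$. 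For $C\subset D$ the induced map $H_T^*(D)\to H_T^*(C)$ corresponds under these embeddings to the coordinate projection $\bigoplus_{Y\subset D^T}H_T^*(Y)\to\bigoplus_{X\subset C^T}H_T^*(X)$, which is algebraically visible. The main obstacle is this inductive step: one must verify that the Chang--Skjelbred compatibilities at each $(K,N)$ really coincide with the inductively identified image of $H_T^*(N)\hookrightarrow\bigoplus_Y H_T^*(Y)$ and that no additional constraints are needed; the codimension-one ramification step, once the Lefschetz-style argument is in hand, is comparatively short.
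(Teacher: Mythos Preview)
Your ramification argument via Lefschetz duality is correct and gives a clean alternative to the paper's Lemma~\ref{lem:ramified}, which instead removes a fixed component and appeals to connectivity of the one-skeleton. Your base case for fixed components, identifying $H_T^*(X)\cong H_T^*(M)/\mathrm{Ann}(\tau_X)$ for any strict $T$-local Thom system, is also correct and essentially the paper's argument.

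The inductive step, however, has a genuine gap at the isotropy-codimension-$1$ level. If $C\in\chi$ has $\lambda(C)$ of codimension $1$ in $T$, then the only codimension-$1$ subtorus $K\supset\lambda(C)$ is $K=\lambda(C)$ itself, whence $C^K=C$ and the only pair $(K,N)$ in your indexing is $(\lambda(C),C)$. Thus $\dim N=\dim C$, contradicting your claim that each such $N$ satisfies $\dim N<\dim C$, and the recursion becomes circular: to cut out the image of $H_T^*(C)\hookrightarrow\bigoplus_X H_T^*(X)$ you would need to already know it. Equivalently, for such $C$ the one-skeleton of the effective circle action is all of $C$, so the Chang--Skjelbred lemma gives no reduction to smaller strata.

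This is exactly the step the paper works hardest on. Rather than inducting, the paper recovers $H_T^*(N)$ for every stratum $N$ directly: it first pins down the genuine equivariant Thom classes $\tau_i$ of the $X_i$ as the essentially unique \emph{minimal} strict Thom system (Lemma~\ref{lem:minimalthomsystem}), then algebraically factors each restriction $\tau_i|_{X_i}$ into weight-pieces via Lemma~\ref{lem:productdecomposition}, assembles from these the restriction to each $X_i$ of the Euler class $e_N$ of $N\subset M$, and uses Lemma~\ref{lem:divisiblebye} to characterize the ideal $r_N^{-1}(e_N\cdot H_T^*(N))\subset H_T^*(M)$. Dividing by the (known) restrictions of $e_N$ then identifies the image of $H_T^*(N)$ in $H_T^*(N^T)$. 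This Euler-class machinery is precisely what handles the isotropy-codimension-$1$ strata; once those are in hand, your Chang--Skjelbred description would indeed recover all higher strata, but the substance of the proof lies in the case your induction skipped.
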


\begin{ex}We remark that the surjectivity condition is automatically satisfied in case the fixed point set is finite. Let us describe two classes of examples with nondiscrete fixed point set for which it is fulfilled. Recall first that whenever a $T$-action on a compact manifold $M$ admits a $T$-invariant Morse-Bott function $f:M\to {\mathbb{R}}$ with critical set $M^T$, then the action is equivariantly formal, and for the global minimum $c$ of $f$, the restriction map $H^*_T(M)\to H^*_T(f^{-1}(c))$ is surjective -- this follows from the arguments in \cite[Section 1]{AB}, see also \cite[Theorem 7.1]{GT}. 

Now, consider a Hamiltonian $T$-action on a compact symplectic manifold for which every component of the fixed point set is mapped, via the momentum map, to the boundary of the momentum polytope. In this setting, for any such component $X$, one can choose a component of the momentum map which attains its global minimum exactly at $X$. Thus, such $T$-actions satisfy the assumptions of Theorem \ref{thm:encodescohomology}.

Similarly, given a toric symplectic manifold $M$, with acting torus $T$, the restriction of the $T$-action to any subtorus $U$ fulfills the same assumptions. Indeed, every component of $M^U$ corresponds to a face in the momentum polytope of the $T$-action, so that it occurs as the minimum of an  appropriate component of the $T$-momentum map.
\end{ex}

Before we come to the proof of Theorem \ref{thm:encodescohomology}, we need several lemmas. See e.g.\ \cite{GT} for the notion of Cohen-Macaulay module in the context of equivariant cohomology. {Briefly, the equivariant cohomology $H_T(X)$ of a $T$-space $X$ is a finitely generated graded module over the graded polynomial algebra $H^*(BT)$. The depth of such a graded module is defined as the maximal lengh of a regular sequence of elements in the unique maximal graded ideal $H^{>0}(BT)$. As usual, one defines the (Krull) dimension as the Krull dimension of the quotient $H^*(BT)$ and the annihilator of the graded module. The depth is always bounded from above by the dimension; if equality holds we call the module a Cohen-Macaulay module.}

 Also recall that a $T$-equivariant vector bundle $V\rightarrow X$, i.e.\ a vector bundle with a $T$-action such that the transformations between fibers are linear, induces a vector bundle $V_T\rightarrow X_T$ over the Borel construction. The equivariant characteristic classes of $V$ are defined as the regular characteristic classes of $V_T$ in $H_T^*(X)$.

\begin{lem}\label{lem:spherebundle}
Let $M$ be an equivariantly formal, compact $T$-manifold and $N\subset M$ a connected component of $M^S$ for some subtorus $S\subset T$. Let $SN$ be the sphere bundle of the normal bundle of $N\subset M$ and $e\in H_T^*(N)$ its equivariant Euler class. Then the bundle induces an isomorphism $H_T^*(SN)\cong H_T^*(N)/(e)$. Furthermore, $H_T^*(SN)$ is Cohen-Macaulay.
\end{lem}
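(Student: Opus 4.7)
The plan is to derive the isomorphism from the Gysin sequence of the sphere bundle $(SN)_T\to N_T$ obtained by the Borel construction. By Proposition \ref{prop:eqforminherited}, the $T$-action on $N$ is equivariantly formal, so $H^*_T(N)\cong R\otimes H^*(N)$ is a free, hence torsion-free, $R$-module. Writing $k=\rk\nu$, the Gysin sequence reads
\[
\cdots \to H^{*-k}_T(N) \xrightarrow{\,\cdot e\,} H^*_T(N) \to H^*_T(SN) \to H^{*-k+1}_T(N) \to \cdots,
\]
with the middle map being the pullback along the bundle projection. Once multiplication by $e$ is shown to be injective, the sequence collapses to short exact pieces and yields a ring isomorphism $H^*_T(SN)\cong H^*_T(N)/(e)$, multiplicativity coming from the fact that the pullback is a ring homomorphism.

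The key step is injectivity of $\cdot e$. Since $N$ is a connected component of $M^S$, the $S$-representation on the fibers of $\nu$ has no nonzero fixed vector, so every weight of $T$ on $\nu_p$, for any $p\in N^T$, restricts nontrivially to $S$ and is therefore a nonzero element of $R$. Moreover $N^T\ne\emptyset$: the Chang--Skjelbred Lemma \ref{lem:CSL} applied to the equivariantly formal action on $N$ injects the nonzero module $H^*_T(N)$ into $H^*_T(N^T)$. Localizing at the generic point, with $K=\mathrm{Quot}(R)$, Borel localization gives
\[
K\otimes_R H^*_T(N)\;\cong\;\bigoplus_{C} K\otimes H^*(C),
\]
where $C$ ranges over the components of $N^T$ and $T$ acts trivially on each $C$. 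On each summand the image of $e$ is the equivariant Euler class of $\nu|_C$, which via the splitting principle takes the form $e_C+\eta_C$, where $e_C\in R$ is the product of the $T$-weights of $\nu|_C$---nonzero, hence a unit in $K$, since each individual weight is nonzero---and $\eta_C$ lies in the nilpotent ideal $K\otimes H^+(C)$ (nilpotent because $H^+(C)$ is so in the finite-dimensional algebra $H^*(C)$). Thus $e$ is a unit after localization, $\cdot e$ is an isomorphism there, and by torsion-freeness of $H^*_T(N)$ it is already injective.

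For the Cohen--Macaulay claim I use local cohomology. Being free, $H^*_T(N)$ is Cohen--Macaulay over $R$ of Krull dimension $r=\rk T$, so $H^i_{\mathfrak m}(H^*_T(N))=0$ for $i<r$, where $\mathfrak m$ denotes the augmentation ideal of $R$. The short exact sequence $0\to H^*_T(N)\xrightarrow{\,\cdot e\,}H^*_T(N)\to H^*_T(SN)\to 0$ induces a long exact sequence in local cohomology, forcing $H^i_{\mathfrak m}(H^*_T(SN))=0$ for $i<r-1$, i.e.\ $\mathrm{depth}_R H^*_T(SN)\geq r-1$. Since $e$ is a nonzero divisor in $H^*_T(N)$, the Krull dimension of $H^*_T(SN)$ equals $r-1$, and the equality of depth and dimension gives the Cohen--Macaulay property.

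The main obstacle will be the injectivity of $\cdot e$: because $e$ does not lie in $R$, it cannot be treated as a nonzero divisor by elementary means, and the detour through Borel localization combined with the nilpotence of $H^+(C)$ is the crucial move; everything else is formal bookkeeping with the Gysin sequence and the local cohomology exact sequence.
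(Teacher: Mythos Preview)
Your proof is correct and follows essentially the same route as the paper: both reduce the isomorphism to the injectivity of multiplication by $e$ on $H_T^*(N)$, establish that injectivity by passing to the $T$-fixed locus (the paper via the injection $H_T^*(N)\hookrightarrow H_T^*(N^T)$, you via Borel localization at the generic point, where $e$ becomes a unit), and then compare depth and dimension. Two minor remarks: the injection $H_T^*(N)\hookrightarrow H_T^*(N^T)$ you invoke is not the Chang--Skjelbred Lemma but the consequence of Borel localization that the kernel is the torsion submodule, which vanishes because $H_T^*(N)$ is free; and your local-cohomology long exact sequence is a clean self-contained replacement for the paper's appeal to \cite[Lemma~5.2]{GT}.
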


\begin{proof}
There is a fiber bundle $S^n\rightarrow (SN)_T\rightarrow N_T$ of Borel constructions. In the associated Serre spectral sequence, the generator of $H^n(S^n)$ transgresses onto the equivariant Euler class $e\in H_T^*(N)$ of the normal bundle. Now multiplication with $e$ is injective in $H_T^*(N)$: to see this it suffices to check that multiplication is injective on $H_T^*(N^T)$ and this is the case because $e$ restricts to a nonzero element in $H_T^*(*)$ for any point in $N^T$ (there it restricts to the monomial over all weights of the normal representation at $*$, see e.g.\ \cite[Lemma 6.10]{Kaw}).

As a consequence, in the spectral sequence we are ultimately left with a single row and  $H_T^*(SN)\cong H_T^*(N)/(e)$ as $R$-algebras. By \cite[Lemma 5.2]{GT} we have
\[\mathrm{depth}(H_T^*(SN))\geq \dim T-1.\] But also $\dim_R H_T^*(SN)\leq \dim T-1$ as $(0)\subsetneq \mathrm{Ann}_R(H_T^*(SN))$ and $(0)$ is prime.
\end{proof}

\begin{lem}\label{lem:divisiblebye}
Let $M$ be an equivariantly formal, compact $T$-manifold and $N\subset M$ a connected component of $M^S$ for some subtorus $S\subset T$. Let $e$ be the equivariant Euler class of the normal bundle of $N\subset M$. Then an element $x\in H_T^*(N)$ is divisible by $e$ if and only if for any component $X\subset N^T$ the restriction $x|_{H_T^*(X)}$ is divisible by $e|_{H_T^*(X)}$.
\end{lem}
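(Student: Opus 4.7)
The easy direction is immediate: if $x=ey$ in $H_T^*(N)$ then $x|_X=e|_X\cdot y|_X$ for each component $X$ of $N^T$. For the converse, I would first translate it into a statement about sphere bundles. By Lemma~\ref{lem:spherebundle} applied to $N\subset M$, we have $H_T^*(SN)\cong H_T^*(N)/(e)$, so $x$ is divisible by $e$ iff it vanishes in $H_T^*(SN)$. Analogously, applying the Serre spectral sequence to $S(\nu|_X)\to X$ for each component $X\subset N^T$---using that $e|_X$ is a non-zero-divisor, since at every $p\in N^T$ the normal weights of $\nu$ are nonzero---one obtains $H_T^*(SN|_{N^T})\cong H_T^*(N^T)/(e|_{N^T})$. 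Thus the statement becomes: the restriction $\psi\colon H_T^*(SN)\to H_T^*(SN|_{N^T})$ is injective.

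Next, I would form the diagram of Gysin short exact sequences (short exact because $\cdot e$ and $\cdot e|_{N^T}$ are injective by the preceding) with vertical maps given by the injective restriction $\phi\colon H_T^*(N)\to H_T^*(N^T)$ (from equivariant formality). The snake lemma identifies $\ker\psi$ with the $e|_{N^T}$-torsion submodule of $\mathrm{coker}(\phi)=H_T^*(N^T)/\phi(H_T^*(N))$. By the Chang--Skjelbred Lemma~\ref{lem:CSL}, $\phi(H_T^*(N))$ equals the image of $H_T^*(N_1)\to H_T^*(N^T)$, and since the components $X$ of $N^T$ are partitioned by the components $Y$ of $N_1$, the cokernel splits as $\bigoplus_Y\mathrm{coker}(\phi_Y)$ with $\mathrm{coker}(\phi_Y):=H_T^*(Y^T)/\phi_Y(H_T^*(Y))$, and multiplication by $e|_{N^T}$ respects this decomposition. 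It therefore suffices to show for each component $Y$ of $N_1$ that $e|_{Y^T}$ acts as a non-zero-divisor on $\mathrm{coker}(\phi_Y)$.

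If $Y\subset N^T$ the summand is zero. Otherwise $Y$ is a closed component of $N^K$ for some codimension-one subtorus $K\subset T$; a point-stabilizer analysis (using $Y\subset N\subset M^S$ and that generic points in $Y$ have stabilizer identity component $K$) shows that $S\subset K$. The $T$-action on $Y$ factors through $T/K\cong S^1$, and $Y$ is equivariantly formal by Proposition~\ref{prop:eqforminherited}. Choosing a splitting $T=K\times L$ with generator $u\in H^*(BL)$, so $R=H^*(BK)[u]$, one obtains $\mathrm{coker}(\phi_Y)\cong H^*(BK)\otimes C_Y$ with $C_Y$ a finitely generated $u$-torsion $\mathbb{Q}[u]$-module (by Borel localization for the $L$-action). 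I would use the $u$-adic filtration $F^k=u^k\cdot\mathrm{coker}(\phi_Y)$: multiplication by $e|_{Y^T}$ commutes with $u$ hence preserves it, and the induced map on each graded piece $F^k/F^{k+1}$, a free $H^*(BK)$-module, is multiplication by the reduction $e|_{Y^T}|_{u=0}$. The crucial nonvanishing is that at each $p\in Y^T$ the value $e|_p|_{u=0}$ equals the product of the $K$-components of the $T$-weights of $\nu_p$; since $S\subset K$ and $\nu_p$ has no $S$-trivial subrepresentation, each $K$-component is nonzero, so $e|_p|_{u=0}$ is a nonzero element of the integral domain $H^*(BK)$. The main obstacle is rigorously deducing from this pointwise nonvanishing that the induced endomorphism of the associated graded is genuinely a non-zero-divisor: one must control the $H^+(Y^T)$-valued cross-terms in $e|_{Y^T}|_{u=0}$ beyond its restrictions to individual fixed points, which I plan to handle either by a secondary filtration by cohomological degree in $H^*(Y^T)$ or by passing to the generic localization at $\mathfrak{p}_L$ and arguing over the fraction field of $H^*(BK)$.
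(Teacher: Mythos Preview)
Your reformulation---divisibility by $e$ is vanishing in $H_T^*(SN)$, and the snake lemma identifies $\ker\psi$ with the $e|_{N^T}$-torsion in $\mathrm{coker}\phi$---is correct and is a genuinely different route from the paper's. But there is a real error in the decomposition step: a path-component $Y$ of the one-skeleton $N_1$ is in general \emph{not} a component of $N^K$ for a single codimension-one subtorus $K$. Strata with distinct codimension-one isotropies can meet at $T$-fixed points and hence lie in the same component of $N_1$; already the standard $T^2$-action on $\CC\PP^2$ has a connected one-skeleton consisting of three spheres with three different isotropy circles. Thus your assertion that the $T$-action on $Y$ factors through a single $T/K$, and with it the tensor splitting $\mathrm{coker}(\phi_Y)\cong H^*(BK)\otimes C_Y$ on which the $u$-adic filtration argument rests, collapse. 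The acknowledged ``main obstacle'' is a second, independent gap that your proposed secondary filtrations would still have to negotiate even when $Y$ does happen to be a single $N^K$-component.

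The paper's argument bypasses all of this by exploiting the other half of Lemma~\ref{lem:spherebundle}, which you cite but do not use: $H_T^*(SN)$ is Cohen--Macaulay of Krull dimension $\dim T-1$. By \cite[Theorem~6.1]{GT} this alone forces the restriction $H_T^*(SN)\to H_T^*((SN)_1)$ to be injective. One then only has to observe that $(SN)_1\subset SN|_{N^T}$: if $p\in N\setminus N^T$ then the identity component $K=(T_p)_0$ has codimension one and contains $S$, so the normal fibre $\nu_p$ (which has no nonzero $S$-fixed vector) has no nonzero $K$-fixed vector either, and hence every $(p,v)\in SN$ over such $p$ has $T$-orbit of dimension at least $2$. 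Thus if $x|_X$ is divisible by $e|_X$ for each component $X\subset N^T$, then $x$ dies on $SN|_{N^T}\supset (SN)_1$, hence on $SN$, hence $e\mid x$. Your snake-lemma approach could probably be completed, but it amounts to reproving this Cohen--Macaulay injectivity by hand; working on $SN$ rather than on $N$ is what makes the short argument possible.
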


\begin{proof}
{By Lemma \ref{lem:spherebundle}}, $x$ is divisible by $e$ if and only if it restricts to $0$ in $H_T^*(SN)$. As the latter is Cohen-Macaulay of dimension $\dim T-1$, this is the case if and only if the restriction to $H_T^*((SN)_1)$ is $0$, {where $(SN)_1$ denotes the subspace of $SN$ of all points whose orbits are of dimension $\leq 1$} (see \cite[Theorem 6.1]{GT}). Now $(SN)_1$ is contained in the restriction $\left.(SN)\right|_{N^T}$ of $SN$ to $N^T$. If $x|_{H_T^*(X)}$ is divisible by $e|_{H_T^*(X)}$ for every component $X\subset M^T$ then it follows that $x|_{H_T^*(\left.(SN)\right|_{N^T})}=0$ and thus $x|_{H_T^*((SN)_1)}=0$.
\end{proof}

\begin{lem}\label{lem:complementef}
Let $M$ be an equivariantly formal, compact orientable $T$-manifold such that the map $H^*(M)\rightarrow H^*(N)$ is surjective for some component $N$ of $M^T$. Then the $T$-action on $M\backslash N$ is equivariantly formal.
\end{lem}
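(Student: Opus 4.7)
The plan is to verify condition (iv) of Lemma~\ref{lem:equivformal}, namely $\dim H^*((M\setminus N)^T) = \dim H^*(M\setminus N)$. The left-hand side is easy to compute: since $(M\setminus N)^T = M^T \setminus N$ and $M$ is equivariantly formal,
\[
\dim H^*((M\setminus N)^T) = \dim H^*(M^T) - \dim H^*(N) = \dim H^*(M) - \dim H^*(N).
\]
The right-hand side will be controlled through the long exact sequence of the pair $(M, M\setminus N)$. As a connected component of $M^T$, the submanifold $N$ is closed, and since $M$ is orientable the weight decomposition of the normal bundle endows both $N$ and its normal bundle with canonical orientations; hence the Thom isomorphism $H^*(M, M\setminus N) \cong H^{*-c}(N)$ applies, where $c$ is the codimension. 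The sequence reads
\[
\cdots \rightarrow H^{k-c}(N) \stackrel{\alpha}{\rightarrow} H^k(M) \rightarrow H^k(M\setminus N) \rightarrow H^{k-c+1}(N) \rightarrow \cdots,
\]
with $\alpha$ the Gysin push-forward.

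The crux is the injectivity of $\alpha$, and this is where the surjectivity hypothesis is used. Poincar\'e duality identifies $\alpha$ with the homology pushforward $i_*\colon H_*(N) \rightarrow H_*(M)$ via $\alpha = \mathrm{PD}_M^{-1} \circ i_* \circ \mathrm{PD}_N$, so $\alpha$ is injective if and only if $i_*$ is. Over $\mathbb{Q}$, the universal coefficient theorem realizes $i_*$ as the dual linear map of $i^* = r\colon H^*(M) \rightarrow H^*(N)$ between finite-dimensional vector spaces; therefore $i_*$ is injective precisely when $r$ is surjective, which is exactly the hypothesis. The main obstacle in the proof is really only this identification, but it is a clean consequence of Poincar\'e duality plus UCT.

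With $\alpha$ injective in every degree, the long exact sequence degenerates into short exact sequences
\[
0 \rightarrow H^{k-c}(N) \rightarrow H^k(M) \rightarrow H^k(M\setminus N) \rightarrow 0,
\]
and summing dimensions yields $\dim H^*(M\setminus N) = \dim H^*(M) - \dim H^*(N)$, which matches the value of $\dim H^*((M\setminus N)^T)$ computed above. Since $M \setminus N$ is a paracompact smooth manifold with only finitely many identity components of isotropy subgroups (those already occurring on the compact manifold $M$), Borel localization is available so that Lemma~\ref{lem:equivformal} applies, and condition (iv) then yields equivariant formality of the induced $T$-action on $M\setminus N$.
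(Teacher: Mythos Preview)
Your proof is correct and follows essentially the same strategy as the paper: reduce to the Betti number criterion of Lemma~\ref{lem:equivformal}, use the surjectivity hypothesis together with duality to get injectivity of the homology pushforward $i_*\colon H_*(N)\to H_*(M)$, and extract the dimension count from a long exact sequence. The only cosmetic difference is that the paper runs the homology long exact sequence of the pair $(M,N)$ and invokes Lefschetz duality $H_*(M,N)\cong H^{n-*}(M\backslash N)$, whereas you run the cohomology long exact sequence of $(M,M\backslash N)$ and invoke the Thom isomorphism; these are dual bookkeeping choices leading to the same conclusion. Your remark that $M\backslash N$ is paracompact with finitely many connected isotropy types, so that Lemma~\ref{lem:equivformal} is applicable, is in fact more careful than the paper's own citation.
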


\begin{proof}
By Lemma \ref{lem:equivformal}, any $T$-space $X$ with finite dimensional cohomology is equivariantly formal if and only if the sums over all Betti numbers satisfy $\dim_\mathbb{Q} H^*(X)=\dim_\mathbb{Q} H^*(X^T)$. Since $(M\backslash N)^T= M^T\backslash N$ it suffices to prove that $\dim H^*(M\backslash N)=\dim H^*(M)-\dim H^*(N)$. By assumption, the inclusion of $N$ is injective on homology and thus the long exact homology sequence of the pair $(M,N)$ splits into short exact sequences
\[0\rightarrow H_*(N)\rightarrow H_*(M)\rightarrow H_*(M,N)\rightarrow 0.\]
But by Lefschetz duality, applied to $M$ with a tubular neighborhood of $N$ removed, we have $H_*(M,N)=H^{n-*}(M\backslash N)$, where $n$ is the dimension of $M$. 
\end{proof}

\begin{lem}\label{lem:minimalthomsystem}
Let $M$ be an equivariantly formal, compact orientable $T$-manifold such that the map $H^*(M)\rightarrow H^*(X_i)$ is surjective for all components $X_1,\ldots,X_k$ of $M^T$. Then there is a strict Thom system $\tau_1,\ldots,\tau_k$ of $H_T^*(M)$ which is minimal in the sense that for any other strict Thom system $\tau_1'\ldots,\tau_k'$, after possibly changing the order, the cohomological degrees satisfy $|\tau_i|\leq |\tau_i'|$. It is, up to scalars from $\mathbb{Q}^\times$, uniquely given by the equivariant Thom classes of the components of $M^T$ in $M$.
\end{lem}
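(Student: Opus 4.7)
The plan is to produce the strict Thom system as the collection of equivariant Thom classes $\tau_i \in H_T^{c_i}(M)$ of the fixed components $X_i$, where $c_i$ is the codimension of $X_i$ in $M$, and then establish minimality and uniqueness by showing that any class ``concentrated on $X_i$'' in the appropriate sense has degree at least $c_i$, with equality forcing it to be a scalar multiple of $\tau_i$.

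First I would verify that $\tau_1, \dots, \tau_k$ form a strict Thom system of $H_T^*(M)$. By construction $\tau_i|_{X_j} = 0$ for $j \neq i$ (the restriction factors through $H_T^*(X_j, X_j) = 0$), while $\tau_i|_{X_i}$ is the equivariant Euler class $e_i$ of the normal bundle, which at each point of $X_i$ restricts to the nonzero product of weights of the $T$-representation on the normal fiber; in particular $\tau_i$ is not nilpotent. Thus $\tau_i \tau_j$ vanishes on every component of $M^T$, and equivariant formality of $M$ makes $H_T^*(M) \hookrightarrow H_T^*(M^T)$ injective, forcing $\tau_i \tau_j = 0$. Combined with Proposition \ref{prop:TSchar'} applied to $M^T$, this yields a strict Thom system of cardinality $k$.

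The core of the argument is the claim that any $\tau \in H_T^*(M)$ restricting to $0$ on $X_j$ for all $j \neq i$ and to a non-nilpotent element on $X_i$ satisfies $|\tau| \geq c_i$, with equality forcing $\tau \in \mathbb{Q}^\times \tau_i$. To prove it, I would apply Lemma \ref{lem:complementef} (whose hypothesis is exactly the surjectivity assumption made here) to conclude that the $T$-action on $M \setminus X_i$ is equivariantly formal; by torsion-freeness plus Borel localization this makes $H_T^*(M \setminus X_i) \hookrightarrow H_T^*((M \setminus X_i)^T) = \bigoplus_{j \neq i} H_T^*(X_j)$ injective, so $\tau|_{M \setminus X_i} = 0$. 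The long exact sequence of the pair $(M, M \setminus X_i)$ together with the equivariant Thom isomorphism $H_T^*(M, M \setminus X_i) \cong H_T^{*-c_i}(X_i)$ then produces $\beta \in H_T^{|\tau|-c_i}(X_i)$ with $\tau = \iota_*(\beta)$, whence $\tau|_{X_i} = e_i \cdot \beta$. Since multiplication by $e_i$ is injective on $H_T^*(X_i) = R \otimes H^*(X_i)$ (its leading $R$-coefficient is a nonzero weight product), $\beta \neq 0$, giving $|\tau| \geq c_i$. In the equality case $\beta \in H_T^0(X_i) = \mathbb{Q}$ is a nonzero scalar $\lambda$, and injectivity of restriction to $M^T$ identifies $\tau$ with $\lambda \tau_i$.

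Applying the claim to the elements of an arbitrary strict Thom system $\tau_1', \dots, \tau_k'$, reordered via Lemma \ref{lem:strictthomsys}(i) so that $\tau_i'$ is paired with $X_i$, delivers both $|\tau_i'| \geq |\tau_i|$ and the uniqueness statement. The main obstacle is this middle step, which simultaneously requires coordinating three inputs: equivariant formality of the complement $M \setminus X_i$ (the sole place where the surjectivity assumption enters, via Lemma \ref{lem:complementef}), the long exact sequence paired with the equivariant Thom isomorphism (to realize supported classes as push-forwards), and the non-zero-divisor property of $e_i$ on $H_T^*(X_i)$ (to turn the push-forward identity into both a degree bound and a rigidity statement).
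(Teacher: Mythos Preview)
Your proof is correct and follows essentially the same approach as the paper. Both arguments construct the Thom classes $\tau_i$, verify they form a strict Thom system via injectivity of $H_T^*(M)\hookrightarrow H_T^*(M^T)$, invoke Lemma~\ref{lem:complementef} to get equivariant formality of $M\setminus X_i$, and deduce that any competing $\tau_i'$ restricts to zero on $M\setminus X_i$; the only cosmetic difference is that the paper reaches the conclusion $\tau_i'|_{X_i}\in (e_i)$ via the Mayer--Vietoris sequence for $(M\setminus X_i,\,DX_i)$ and Lemma~\ref{lem:spherebundle}, whereas you use the long exact sequence of the pair $(M,M\setminus X_i)$ together with the equivariant Thom isomorphism---these are equivalent packagings of the same excision/Gysin input, and your version makes the uniqueness step (the case $|\tau_i'|=c_i$ forcing $\beta\in H_T^0(X_i)=\mathbb{Q}$) slightly more explicit than the paper's ``thus the claim follows.''
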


\begin{proof}
There is a Mayer-Vietoris sequence
\[0\rightarrow H_T^*(M)\rightarrow H_T^*(M\backslash X_i)\oplus H_T^*(X_i)\rightarrow H_T^*(SX_i)\rightarrow 0\]
where $SX_i$ is the sphere bundle of the normal bundle of $X_i$ in $M$. Let $e_i\in H_T^*(X_i)$ denote the equivariant Euler class of $SX_i$ and let $\tau_i\in H_T^*(M)$ be the equivariant Thom class of $X_i\subset M$, i.e.\ the unique element that restricts to $(0,e_i)$ in the above sequence. Then by Lemma \ref{lem:strictthomsys}, $\tau_1,\ldots,\tau_k$ form a strict Thom system.

Assume {$\tau_1',\ldots,\tau_k'$} is a strict Thom system of $H^*_T(M)$. Then, {for any $i$}, by Lemma \ref{lem:strictthomsys} the element $\tau_i'$ restricts to $0$ on all $X_j$ except for $X_i$. The action on $M\backslash X_i$ is again equivariantly formal by Lemma \ref{lem:complementef}. Thus $\tau_i'$ restricts to $0$ in $H_T^*(M\backslash X_i)$ and therefore to the kernel of $H_T^*(X_i)\rightarrow H_T^*(SX_i)$. The latter is the ideal generated by the equivariant Euler class of $SX$, thus the claim follows.
\end{proof}

\begin{lem}\label{lem:ramified}
Let $M$ be an equivariantly formal compact, orientable $T$-manifold such that the map $H^*(M)\rightarrow H^*(X_i)$ is surjective for all components $X_1,\ldots,X_k$ of $M^T$. Then every isotropy codimension $1$ element of the connected orbit type stratification $\chi$ of $M$ is ramified.
\end{lem}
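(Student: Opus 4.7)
The plan is to show that any isotropy codimension $1$ element $C\in\chi$ contains at least two distinct connected components of $M^T$; this immediately yields ramification, because the components of $M^T$ are themselves ramified (as minimal elements), and no proper sub-element of $C$ in $\chi$ can contain two distinct components of $M^T$. Indeed, any $C'\in\chi$ with $C'\subsetneq C$ has $\lambda(C')\supsetneq \lambda(C)=U$; since $\lambda(C')$ is connected and $U$ has codimension $1$, we must have $\lambda(C')=T$, so $C'$ is itself a single component of $M^T$ and cannot contain two such components.

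First I would verify two preliminary facts about $C$. Equivariant formality of $C$ follows from Proposition \ref{prop:eqforminherited}, so $\dim H^*(C^T)=\dim H^*(C)\geq 1$ gives $C^T\neq\emptyset$. Moreover $C^T\subsetneq C$: otherwise $T$ acts trivially on $C$, making $C$ a component of $M^T$ of isotropy codimension $0$, contradicting the hypothesis. Also, $C$ is a compact orientable $T$-manifold, as the fixed point set of a torus action on an orientable manifold is a closed orientable submanifold.

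Next I would argue by contradiction: suppose $C^T$ is a single component $X$ of $M^T$. The surjection $H^*(M)\to H^*(X)$ factors as $H^*(M)\to H^*(C)\to H^*(X)$, so the second map is also surjective. This places us in the hypotheses of Lemma \ref{lem:complementef}, applied with $M$ replaced by $C$ and $N$ replaced by $X$, so the $T$-action on $C\setminus X$ is equivariantly formal. But $(C\setminus X)^T=C^T\setminus X=\emptyset$, forcing $\dim H^*(C\setminus X)=\dim H^*(\emptyset)=0$, which contradicts $X\subsetneq C$ (a nonempty space has $H^0\neq 0$). Hence $C^T$ must have at least two components, and $C$ is ramified.

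The main (and only mildly nontrivial) step is the reduction to Lemma \ref{lem:complementef}; after checking that $C$ inherits all required hypotheses (compactness, orientability, equivariant formality, and the surjectivity $H^*(C)\to H^*(X)$), the contradiction is immediate from the vanishing of the fixed set of $T$ on $C\setminus X$.
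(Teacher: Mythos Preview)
Your proof is correct and takes a slightly different route from the paper's. Both arguments reduce to Lemma \ref{lem:complementef}, but the paper applies it to the ambient manifold $M$: assuming $C$ contains only $X_i$, it observes that the one-skeleton of $M\setminus X_i$ is disconnected (the piece $C\setminus X_i$ is separated from the remaining fixed components), and then invokes that the one-skeleton of an equivariantly formal action on a connected manifold is connected (a consequence of the Chang--Skjelbred Lemma) to obtain a contradiction. You instead apply Lemma \ref{lem:complementef} directly to the stratum $C$, after verifying that $C$ inherits compactness, orientability, equivariant formality, and the surjectivity $H^*(C)\to H^*(X)$. This yields the contradiction immediately from $(C\setminus X)^T=\emptyset$, without any discussion of the global one-skeleton or its connectivity. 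Your localized argument is a bit more self-contained; the paper's argument, on the other hand, only needs Lemma \ref{lem:complementef} for the manifold $M$ already at hand and does not need to transfer the orientability and surjectivity hypotheses down to $C$.
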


\begin{proof}
Suppose this is not the case. {Every fixed point component $X_i$ is minimal in $\chi$ and hence ramified. Thus an isotropy codimension 1 element containing multiple of the $X_i$ will be ramified as well because it is necessarily minimal with respect to that property. It follows from the assumption that} there is an isotropy codimension $1$ element ${C\in}\chi$ which contains only a single $X_i$. {By Proposition \ref{prop:eqforminherited} the action on $C$ is again equivariantly formal. Since $H^*(M)\rightarrow H^*(X_i)$ factors through $H^*(C)$ we deduce surjectivity of $H^*(C)\rightarrow H^*(X_i)$. Now by Lemma \ref{lem:complementef} the action on $C\backslash X_i$ is equivariantly formal. However it does not have fixed points, which is a contradiction}.
\end{proof}

For the following lemma, recall our convention that elements in a graded space are assumed to be homogeneous.

\begin{lem}\label{lem:productdecomposition}
Let $X$ be a path-connected, trivial $T$-space.
\begin{enumerate}[(i)]
\item Fix $ x\in H_T^*(X)$ as well as coprime elements $f_1,\ldots,f_k\in R$. If there are $x_1,\ldots,x_k\in H_T^*(X)$ with the properties that $x=\prod_{i=1}^k x_i$ and the $x_i-f_i$ are nilpotent, then $x_1,\ldots,x_k$ are unique.

\item Assume $X$ is a smooth manifold and {$V\rightarrow X$} is an effective $T$-equivariant vector bundle. Then $V$ splits as $V=V_1\oplus \ldots\oplus V_l$ for $T$-equivariant vector bundles $V_i\rightarrow X$ with the property that the identity component of every isotropy in $V_i$ is the same codimension $1$ subtorus $S_i\subset T$ and $S_i\neq S_j$ for $i\neq j$. Let $e$, resp.\ $e_i$, denote the equivariant Euler classes of $V$, resp.\ $V_i$ and let {$\alpha_i\in H^2(BT)$} be a nontrivial weight associated to $S_i$ (i.e.\ which vanishes when restricted to $H^2(BS_i)$). Then $e=\prod_{i=1}^k e_i$ and $e_i-a_i\alpha_i^{k_i}$ is nilpotent for some $k_i\in \mathbb{N}$, $a_i\in \mathbb{Q}^\times$.

\item In the setting of $(ii)$, suppose we have a decomposition $e=\prod_{i=1}^l x_i$ such that $x_i-b_i\beta_i^{l_i}$ is nilpotent for some $b_i\in \mathbb{Q}^\times$, $l_i\in \mathbb{N}$ and pairwise linearly independent {$\beta_i\in H^2(BT)$}. Then there are $c_i,d_i\in\mathbb{Q}^\times$, such that after possibly changing the order we have $\alpha_i=c_i\beta_i$  and $e_i=d_ix_i$, $i=1,\ldots,l$.
\end{enumerate}

\end{lem}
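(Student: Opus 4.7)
The plan is to reduce to $k=2$ by induction on $k$, grouping $f_2,\ldots,f_k$ into a single factor (note that if $f_1$ is coprime to each $f_j$, it is coprime to their product in the UFD $R$). For $k=2$, set $u=x_1'-x_1$ and $v=x_2'-x_2$ in the nilradical $I:=R\otimes H^+(X)$; expanding $x_1 x_2=x_1'x_2'$ gives the exact relation $x_1 v+x_2' u+uv=0$. I will show $u,v\in I^n$ by induction on $n$, starting from the given $u,v\in I$. Assuming $u,v\in I^n$, the cross term $uv$ lies in $I^{2n}\subset I^{n+1}$, so the relation reduces modulo $I^{n+1}$ to $f_1 v+f_2 u\equiv 0$. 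Decomposing $u,v$ along a homogeneous basis $\{w_l\}$ of $(H^+(X))^n/(H^+(X))^{n+1}$ converts this into polynomial syzygies $f_1 r_{v,l}+f_2 r_{u,l}=0$ in $R$; coprimality in the UFD $R$ forces $(r_{v,l},r_{u,l})=c_l(f_2,-f_1)$ for some $c_l\in R$, and homogeneity makes $|c_l|=-|w_l|<0$, so $c_l=0$ by positivity of the grading on $R$. Iterating and invoking nilpotency of $I$ yields $u=v=0$.

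\textbf{Part (ii).} Since the $T$-action on $X$ is trivial, Haar-averaged projectors produce continuous fiberwise weight subbundles, and connectedness of $X$ plus discreteness of the character lattice make this decomposition global. Regrouping by the identity component of the kernel of each weight yields the $V_i$'s associated to distinct codimension-$1$ subtori $S_i$ (effectiveness rules out a trivial-weight summand). Multiplicativity of the Euler class gives $e=\prod e_i$. The restriction of $e_i$ to a point $p\in X$ coincides with the projection $\pi:H_T^*(X)=R\otimes H^*(X)\to R$ killing the nilradical, and computes the equivariant Euler class of the $T$-representation $V_{i,p}$, whose weights are nonzero integer multiples $m_j\alpha_i$ of $\alpha_i$. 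This yields $\prod_j(m_j\alpha_i)=a_i\alpha_i^{k_i}$ with $a_i:=\prod_j m_j\in\mathbb{Z}\setminus\{0\}$ and $k_i=\dim_\mathbb{C} V_i$; hence $e_i-a_i\alpha_i^{k_i}\in I$ is nilpotent.

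\textbf{Part (iii).} Applying $\pi$ to the two factorizations of $e$ yields the polynomial identity $\prod_i a_i\alpha_i^{k_i}=\prod_j b_j\beta_j^{l_j}$ in $R=\mathbb{Q}[t_1,\ldots,t_r]$. Each $\alpha_i,\beta_j\in R^2$ is a nonzero linear form in the polynomial generators, hence irreducible, and the pairwise linear independence makes the $\alpha_i$'s (resp.\ $\beta_j$'s) pairwise non-associate primes. Unique factorization in $R$ then gives, after reordering, constants $c_i\in\mathbb{Q}^\times$ with $\alpha_i=c_i\beta_i$, exponents $k_i=l_i$, and the global scalar identity $\prod_i a_ic_i^{k_i}=\prod_i b_i$. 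Setting $d_i:=a_ic_i^{k_i}/b_i$ gives $\prod d_i=1$ and $\pi(d_i x_i)=d_i b_i\beta_i^{k_i}=a_i\alpha_i^{k_i}=\pi(e_i)$, so both $d_i x_i$ and $e_i$ lift the common normal form $f_i:=a_i\alpha_i^{k_i}$ modulo nilpotents, while $\prod_i(d_i x_i)=(\prod d_i)\cdot e=e=\prod_i e_i$. The $f_i$ are pairwise coprime in $R$ (non-proportional $\alpha_i$), so part (i) applies and forces $e_i=d_i x_i$.

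\textbf{Main obstacle.} The delicate step is part (i), a graded ``Hensel-type'' uniqueness result. In a plain commutative ring with a nilpotent ideal the analogous statement is false -- for instance, modulo $y^2$ one has $(f_1+y)(f_2+y)=(f_1+(1+f_1)y)(f_2+(1-f_2)y)$ -- so the proof must genuinely exploit positivity of the grading on $R$, which is precisely what collapses the syzygy coefficients $c_l$. Parts (ii) and (iii) are then comparatively clean: (ii) is the standard weight decomposition together with the fiberwise Euler class calculation, and (iii) is a UFD-matching argument that reduces to part (i).
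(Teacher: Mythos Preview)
Your proof is correct and follows essentially the same approach as the paper: reduce (i) to two factors, pass to an associated graded of the nilpotent ideal, and kill the resulting syzygy by a coprimality-plus-degree argument in $R$; parts (ii) and (iii) are handled identically via the weight decomposition and unique factorization in $R$. The only cosmetic difference is that you filter by powers $I^n$ of the nilradical while the paper filters by $R\otimes H^{\geq k}(X)$, and there is a harmless slip in your expansion (the exact relation is $x_1 v + x_2' u = 0$, with no additional $+uv$ term, but any such term lies in $I^{2n}\subset I^{n+1}$ and is discarded in your next step anyway).
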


\begin{proof} Suppose first that we have already shown statement $(i)$ for products of $2$ elements. As the action is trivial, $H_T^*(X)\cong R\otimes H^*(X)$ inherits a multiplicative bigrading. By Lemma \ref{lem:trivialaction'}, $x_i-f_i$ being nilpotent is equivalent to the fact that $x_i$ restricts to $f_i$ in $H_T^*(*)=R$ for any point, i.e.\ the $R\otimes H^0(X)$ component of $x_i$ is $f_i$. In particular $\prod_{i\geq l} x_i-\prod_{i\geq l} f_i$ is nilpotent for any $l$ and $(i)$ follows by inductively applying the result for two factors of the form $x_l$ and $\prod_{i\geq l+1} x_i$.

For the proof with $2$ factors, suppose we have coprime elements $f,g\in R$ and $a,b,a',b'\in H_T^*(X)$ such that $ab=a'b'$ and $a-f$, $b-g$, $a'-f$, $b'-g$ are nilpotent. We show inductively that $a\equiv a'$ and $b\equiv b'\mod R\otimes H^{\geq k}(X)$ for all $k$. Starting at $k=1$ we note that an element is nilpotent if and only if it is contained in $R\otimes H^+(X)$. Thus the assumptions imply  $a\equiv f\cdot 1 \equiv a'$ and $b\equiv g\cdot 1\equiv b'\mod R\otimes H^+(X)$. Now suppose $a\equiv a'$, $b\equiv b'\mod R\otimes H^{k-1}(X)$ for some $k\geq 2$. For any element $y\in H_T^*(X)$ write $y_i$ for its component in $R\otimes H^i(X)$ and $y_{\leq i}:=y_0+\cdots+y_i$. Then
$(a_{\leq k}\cdot b_{\leq k})_k=(ab)_k=x_k=(a'b')_k=(a_{\leq k}'\cdot b_{\leq k}')_k$
and thus by induction \[a_k\cdot g+f\cdot b_k=a_k'\cdot g+f\cdot b_k'.\]
Now choose a basis $h_\alpha$ of $H^k(X)$. We write $a_k=\sum u_\alpha h_\alpha$, $b_k=\sum v_\alpha h_\alpha$, $a_k'=\sum u_\alpha' h_\alpha$, and $b_k'=\sum v_\alpha' h_\alpha$ for unique $u_\alpha,v_\alpha,u_\alpha',v_\alpha'\in R$. In particular we obtain the equations $u_\alpha g+fv_\alpha=u_\alpha'g+fv_\alpha'$ and thus $(u_\alpha-u_\alpha')g=f(v_\alpha-v_\alpha')$ in $R$. As $f$ and $g$ are coprime it follows that $f|(u_\alpha-u_\alpha')$. However note that, as $f\otimes 1$ is the $R\otimes H^0(X)$-part of the homogeneous element $a$, the total degree satisfies $\deg(u_\alpha-u_\alpha')=\deg a - k = \deg f-k<\deg f$. Thus $u_\alpha-u_\alpha'=0$ and also $v_\alpha-v_{\alpha}'=0$ which finishes the proof of $(i)$.

For any $U\subset T$, the fixed point set $V^U$ is a subbundle of $V$. {Note that it is a bundle over all of $X$ since the action on $X$ is assumed to be trivial.} This yields the {desired} decomposition $V=V_1\oplus\ldots\oplus V_l$ (see also {\cite[Proposition 1.6.2]{A}}). The product decomposition of $e$ follows from the sum decomposition of $V$. Restricting $V_i$ to a point gives a representation which decomposes into $2$-dimensional representations associated to a weight which is some nonzero rational multiple of $\alpha_i$. Hence the restriction of $e_i$ to $H_T^*(*)\cong R$ is equal to $c_i\alpha^{k_i}$ for some $c_i\in\mathbb{Q}^\times$, $k_i\in \mathbb{N}$. Then the rest of $(ii)$ follows by Lemma \ref{lem:trivialaction'}.

To prove $(iii)$ note that for a product decomposition of $e$ as {described in the statement of the lemma}, the $R\otimes H^0(X)$ component of $e$ is equal to $\prod b_i\beta_i^{l_i}$. Thus $\alpha_i^{k_i}=\beta_i^{l_i}$ up to scalars and order. Since the $\alpha_i^{k_i}$ are coprime, the claim now follows from $(i)$.
\end{proof}

\begin{proof}[Proof of Theorem \ref{thm:encodescohomology}]
By Corollary \ref{cor:stratifequivformal} and Lemma \ref{lem:ramified}, $\chi$ is encoded in $H_T^*(M)$. {To prove the theorem it remains to show that $H_T^*(X)$ encodes the equivariant cohomology of any element of $\chi$ as well as the maps between these equivariant cohomology rings induced by inclusions in $\chi$. This is done as follows.}

As in Lemma \ref{lem:minimalthomsystem}, we algebraically detect the actual equivariant Thom classes $\tau_1,\ldots, \tau_k$ (up to $\mathbb{Q}^\times)$) of the components $X_1,\ldots,X_k$ {of $M^T$} as a minimal strict Thom system. Since the restriction $H_T^*(M)\rightarrow H^*(M)$ is surjective by Lemma \ref{lem:equivformal} {and Definition \ref{defn:eqformal}}, the surjectivity assumption implies that $H_T^*(M)\rightarrow H_T^*(X_i)$ is surjective for all $i$. The kernel of this map is $K_i=\{x\in H_T^*(M)\mid x\tau_i=0\}$. Thus the cohomology of {the} $X_i$ is encoded as $H_T^*(X_i)\cong H_T^*(M)/K_i$. Now let $N\in \chi$ be a submanifold which without loss of generality contains exactly the components $X_1,\ldots,X_l$ and whose principal isotropy has a subtorus $S\subset T$ as identity component. We will compute $H_T^*(N)$ from this data.

Consider a small equivariant tube $DN\subset M$ around $N$ with equivariant sphere bundle $SN\subset DN$. We have a Mayer-Vietoris sequence
\[0\rightarrow H_T^*(X)\rightarrow H_T^*(M-N)\oplus H_T^*(N)\rightarrow H_T^*(SN)\rightarrow 0.\]
Let $J_N\subset H_T^*(N)$ denote the ideal generated by the equivariant Euler class $e_N$ of the normal bundle of $N\subset M$. By Lemma \ref{lem:spherebundle}, $J_N$ is the kernel of $H_T^*(N)\rightarrow H_T^*(SN)$. Consequently, for {$y\in J_N$}, elements of the form $(0,y)$ in the middle term of the Mayer-Vietoris sequence lie in the image of the restriction map on $H_T^*(X)$. This proves that $J_N$ is contained in the image of the restriction $r_N\colon H_T^*(M)\rightarrow H_T^*(N)$. 

We now try to characterize $r_N^{-1}(J_N)$ algebraically. Let $r_i\colon H_T^*(M)\rightarrow H_T^*(M)/K_i\cong H_T^*(X_i)$, $i=1,\ldots,l$, denote the restriction map. Recall that the class $\tau_i$ restricts in $H_T^*(X_i)$ to the equivariant euler class of the normal bundle of $X_i$ in $M$, up to scalar. Then it follows from part $(ii)$ of Lemma \ref{lem:productdecomposition} that we find a product decomposition \[r_i(\tau_i)=\prod_{j=1}^{n_j}x_{ij}\] in which $x_{ij}$ has the property that for some $c_{ij}\in\mathbb{Q}^\times$ and $l_{ij}\in \mathbb{N}$ the element $x_{ij}-c_i\alpha_{ij}^{l_{ij}}$ is nilpotent where $\alpha_{ij}\in H^2(BT)$ are the distinct weights associated to those codimension $1$ isotropy groups which occur around $X_i$. Then by part $(iii)$ of Lemma \ref{lem:productdecomposition}, the $x_{ij}$ are up to scalars the equivariant Euler classes of certain subbundles $V_{ij}$ of the normal bundle $V_i$ of $X_i\subset M$ as in Lemma \ref{lem:productdecomposition} part $(ii)$. The restriction of the normal bundle of $N\subset M$ to $X_i$ is equal to the sum over those $V_i$ whose unique connected maximal isotropy group does not contain the principal isotropy $S$ of $N$. Let $x_i$ be the product over those $x_{ij}$ for which $\alpha_{ij}$ does not vanish on $S$. Then $x_i\in H_T^*(X_i)$ is -- up to scalar -- the restriction of $e_N$. After possibly renormalizing the individual $x_i$ we find an element $\tau_N\in H_T^*(M)$ for which $r_i(\tau_N)=x_i$, for all $i=1,\ldots,l$.
By Lemma \ref{lem:divisiblebye}, $r_N(\tau_N)$ agrees with $e_N$ up to scalar and we have
\[
r_N^{-1}(J_N)=\{x\in H_T^*(M)\mid r_i(\tau_N)|r_i(x)\text{ for }i=1,\ldots,l\}=:I_N.
\]
This description encodes $r_N^{-1}(J_N)$ algebraically, as it is independent of the particular choice of $\tau_N$. As the ideal $I_N$ restricts onto $J_N=e_N\cdot H_T^*(N)$, the map
\[I_N\rightarrow \bigoplus_{i=1}^l H_T^*(X_i)\cong H_T^*(N^T),\quad x\mapsto \left(\frac{r_1(x)}{r_1(\tau_N)},\ldots,\frac{r_l(x)}{r_l(\tau_N)}\right)\]
is well defined and its image is that of the injective map $H_T^*(N)\rightarrow H_T^*(N^T)$. Thus we have constructed $H_T^*(N)$ out of $H_T^*(M)$. If furthermore $N'\subset N$ is another isotropy manifold containing without loss of generality the fixed point components $X_1,\ldots, X_{l'}$, $l'\leq l$, then there is a commutative diagram
\[\xymatrix{H_T^*(N)\ar[d]\ar[r] & \prod_{i=1}^l H_T^*(X_i)\ar[d]\\ H_T^*(N')\ar[r]& \prod_{i=1}^{l'} H_T^*(X_i)}\]
where the horizontal maps are injective and the right hand map is projection onto the first $l'$ factors. We have just argued that $H_T^*(M)$ encodes the image of the horizontal maps, hence it also encodes the left hand map.
\end{proof}

\section{Remarks on the integral case}\label{sec:integral}
This section has the purpose of commenting on the question what additional information on the orbit type stratification can be deduced from the integral equivariant cohomology. Generalizing from our results in the rational case it seems natural to ask:
\begin{enumerate}[(i)]
\item What does $H_T^*(X;\mathbb{Z})$ know about the full orbit type stratification also considering disconnected isotropies?
\item Does $H_T^*(X;\mathbb{Z})$ encode the equivariant integral cohomologies of the strata in an equivariantly formal setting analogous to Theorem \ref{thm:encodescohomology}?
\end{enumerate}
There are some subtleties regarding the right requirements and the notion of equivariant formality in the integral setting. E.g.\ unlike in the rational case, a module of the form $H^*(BT;\mathbb{Z})\otimes H^*(X;\mathbb{Z})$ is not necessarily free over $H^*(BT;\mathbb{Z})$ and in particular freeness is not necessarily implied by the degeneracy of the Serre spectral sequence of the Borel fibration. This problem however does not arise in case $H^*(X;\mathbb{Z})$ is free over $\mathbb{Z}$. Let us begin by pointing out the limitations of possible generalizations even under the assumption of torsion freeness.

\begin{ex}
Consider $T^2$-actions on $S^4\subset \mathbb{C}^2\oplus \mathbb{R}$ given by $(s,t)\cdot (v,w,h)=(s^at^bv,s^c t^d w,h)$, for $a,b,c,d\in\mathbb{Z}$. Then $R=\mathbb{Z}[X,Y]$ and $H_T^*(S^4)$ is {freely} generated over $R$ by $1\in H^0_T(S^4)$, $\alpha\in H^4_T(S^4)$ with a single relation $\alpha^2=(aX+bY)(cX+dY)\alpha$. To see this, note that the action above is a pullback of the standard $T^2$-action on $S^4$, i.e.\ with $(a,b,c,d)=(1,0,0,1)$ {along the homomorphism $\varphi\colon T^2\rightarrow T^2$, $(s,t)\mapsto (s^at^b,s^ct^d)$. The standard action has} the relation $\alpha^2=XY\alpha$ (this follows e.g.\ from the integral GKM description). There is a map induced by ${\varphi}$ between the two equivariant cohomologies, which maps $H(BT;\mathbb{Z})$ bases to one another and transforms $H(BT;\mathbb{Z})$ via $X\mapsto aX+bY$ and $Y\mapsto cX+dY$. Hence the relation for $\alpha^2$ transforms accordingly as claimed above.

\begin{enumerate}[(i)]
\item
Setting $(a,b,c,d)=(2,0,3,0)$ we obtain 5 different elements in the orbit type stratification: two fixed points, $2$ two-spheres with isotropies $\mathbb{Z}_2\times S^1$ and $\mathbb{Z}_3\times S^1$ as well as the principal orbit type $\{1\}\times S^1$. For $(6,0,1,0)$ there are only 4 as only one two-sphere occurs with isotropy $\mathbb{Z}_6\times S^1$. Thus we see that the combinatorics of the orbit type stratification are not encoded even though all extensions are ramified. The connected orbit type stratification is of course encoded by the previous rational results.

\item
Consider the actions with $(a,b,c,d)$ equal to $(2,0,0,3)$ and $(6,0,0,1)$. Then, while the connected orbit type stratification is encoded in $H_T^*(S^4)$, the corresponding equivariant cohomology algebras are not: in both cases $(S^4)^{S^1\times \{1\}}$ is $S^2$ however the rotation speeds of the respective $T$-actions are different which yields nonisomorphic equivariant cohomology algebras. Thus without further restrictions on the combinatorics of the stratification no integral result analogous to Theorem \ref{thm:encodescohomology} can be expected to hold.
\end{enumerate}
\end{ex}

The reason for the failure of these methods lies in the fact that for general subgroups of $S\subset T$ the Borel localization theorem does no longer establish a bijection between Thom systems of $H_S^*(X)$ and components of $X^S$. This is due to fact that the ring $H^*(BS;\mathbb{Z})$ might contain elements of positive degree which multiply to $0$ and thus multiplicatively closed subsets available for localization are somewhat limited.

\begin{ex}
Consider the above example for $(a,b,c,d)=(2,0,0,3)$ and the subgroup $S=S^1\times \mathbb{Z}_2$. One has $H^*(BS)=\mathbb{Z}[X,Y]/(2Y)$ and $H_S^*(S^4)=H^*(BS)\otimes_R H_T^*(S^4)\cong \mathbb{Z}[X,Y,\alpha]/(2Y,\alpha^2)$. The fixed point set $(S^4)^S$ consists of two discrete points. However there is no element in $H_S^*(S^4)$ which restricts to a nontrivial element on a single fixed point while vanishing on the other: $\alpha$ has to restrict to $0$ on the fixed points due to being nilpotent while elements in the image of $H^*(BS)\rightarrow H_S^*(S^4)$ restrict to the same element on both fixed points. Thus the technique of using Thom systems to detect fixed point components does not apply for the subgroup $S$.
\end{ex}

Despite these counterexamples, the integral cohomology $H_T^*(X;\mathbb{Z})$ does of course know more about the orbit type stratification than $H_T^*(X;\mathbb{Q})$. The correspondence between certain Thom systems and fixed point components of $S$ does carry over in case $H^+(BS;\mathbb{Z})-\{0\}$ is multiplicatively closed, enabling Borel localization. The groups $S$ for which this is the case are precisely tori and $p$-tori, i.e.\ subgroups of the form $(\mathbb{Z}_p)^r$, where $p$ is prime. One way to go would therefore be to develop results analogous to those in this paper for $p$-tori (in fact, the references \cite{Q} and \cite[Section 3.6]{AP} mentioned in the introduction and in Remark \ref{rem:somewhatdually} also deal with $p$-torus actions), and deduce refined results on the orbit type stratification from $H_T^*(X;\mathbb{Z})$ via this route.

\end{document}